\renewcommand*{\bar}{\overline}
\theoremstyle{plain}
\newtheorem{theorem}{Theorem}[section]
\newtheorem{lemma}[theorem]{Lemma}
\newtheorem{prop}[theorem]{Proposition}
\newtheorem{cor}[theorem]{Corollary}
\theoremstyle{definition}
\numberwithin{equation}{section}
\def\imod#1{\allowbreak\mkern5mu({\operator@font mod}\,\,#1)}
\begin{document}

\title[Generalized Paley Graphs]{Generalized Paley Graphs and their Complete Subgraphs of Orders Three and Four}

\author{Madeline Locus Dawsey, Dermot M\lowercase{c}Carthy}

\address{Madeline Locus Dawsey, Department of Mathematics, The University of Texas at Tyler, Tyler, TX 75799, USA}

\address{Dermot M\lowercase{c}Carthy, Department of Mathematics \& Statistics, Texas Tech University, Lubbock, TX 79410-1042, USA\\}

\email{mdawsey@uttyler.edu}

\email{dermot.mccarthy@ttu.edu}

\thanks{The second author was supported by a grant from the Simons Foundation (\#353329, Dermot McCarthy).}

\subjclass[2020]{Primary: 05C30, 11T24; Secondary: 05C55, 11F11}

\begin{abstract}
Let $k \geq 2$ be an integer. Let $q$ be a prime power such that $q \equiv 1 \imod {k}$ if $q$ is even, or, $q \equiv 1 \imod {2k}$ if $q$ is odd. The generalized Paley graph of order $q$, $G_k(q)$, is the graph with vertex set $\mathbb{F}_q$ where $ab$ is an edge if and only if ${a-b}$ is a $k$-th power residue. We provide a formula, in terms of finite field hypergeometric functions, for the number of complete subgraphs of order four contained in $G_k(q)$, $\mathcal{K}_4(G_k(q))$, which holds for all $k$. This generalizes the results of Evans, Pulham and Sheehan on the original ($k$=2) Paley graph. We also provide a formula, in terms of Jacobi sums, for the number of complete subgraphs of order three contained in $G_k(q)$, $\mathcal{K}_3(G_k(q))$. In both cases we give explicit determinations of these formulae for small $k$. We show that zero values of $\mathcal{K}_4(G_k(q))$ (resp.~$\mathcal{K}_3(G_k(q))$) yield lower bounds for the multicolor diagonal Ramsey numbers $R_k(4)=R(4,4,\cdots,4)$ (resp.~$R_k(3)$). We state explicitly these lower bounds for small $k$ and compare to known bounds. We also examine the relationship between both $\mathcal{K}_4(G_k(q))$ and $\mathcal{K}_3(G_k(q))$, when $q$ is prime, and Fourier coefficients of modular forms.
\end{abstract}

\maketitle


\section{Introduction}\label{sec_Intro}
It is well known that the two-color diagonal Ramsey number $R(4,4)$ equals $18$. This was first proved by Greenwood and Gleason \cite{GG} in 1955. They exhibited a self-complementary graph of order 17 which does not contain a complete subgraph of order four, thus showing $17<R(4,4)$, and then combined this with elementary upper bounds. The graph they describe is one in the family of graphs which are now known as Paley graphs. Let $\mathbb{F}_q$ denote the finite field with $q$ elements and let $S$ be its subset of non-zero squares. For $q\equiv 1 \pmod 4$ a prime power, the Paley graph of order $q$, $G(q)$, is the graph with vertex set $\mathbb{F}_q$ where $ab$ is an edge if and only if $a-b \in S$. The Paley graphs are connected, self complementary and strongly regular with parameters $(q, \frac{q-1}{2}, \frac{q-5}{4}, \frac{q-1}{4})$. Please see \cite{J} for more information on their main properties and for a nice exposition of their history since Paley's original paper \cite{P} in 1933. 

Let $\mathcal{K}_m(G)$ denote the number of complete subgraphs of order $m$ contained in a graph $G$. While the work of Greenwood and Gleason tells us that $\mathcal{K}_4(G(17)) =0$ and $\mathcal{K}_4(G(q))>0$ for $q>17$; Evans, Pulham and Sheehan \cite{EPS} provide a simple closed formula for $\mathcal{K}_4(G(p))$ for all primes $p \equiv 1 \pmod 4$. Write $p=x^2 +y^2$ for integers $x$ and $y$, with $y$ even. Then
\begin{equation}\label{for_EPS} 
\mathcal{K}_4(G(p)) = \frac{p(p-1)((p-9)^2-4y^2)}{2^9 \cdot 3}.
\end{equation}

In 2009, Lim and Praeger \cite{LP} introduced \emph{generalized Paley graphs}. Let $k \geq 2$ be an integer. Let $q$ be a prime power such that $q \equiv 1 \imod {k}$ if $q$ is even, or, $q \equiv 1 \imod {2k}$ if $q$ is odd. Let $S_k$ be the subgroup of the multiplicative group $\mathbb{F}_q^{\ast}$ of order $\frac{q-1}{k}$ containing the $k$-th power residues, i.e., if $\omega$ is a primitive element of $\mathbb{F}_q$, then $S_k = \langle \omega^k \rangle$. Then the \emph{generalized Paley graph} of order $q$, $G_k(q)$, is the graph with vertex set $\mathbb{F}_q$ where $ab$ is an edge if and only if $a-b \in S_k$. We note, due to the conditions imposed on $q$, that $-1 \in S_k$ so $G_k(q)$ is a well-defined undirected graph. $G_k(q)$ is connected if and only if $S_k$ generates $\mathbb{F}_q$ under addition.
When $k=2$ we recover the original Paley graph.

The main purpose of this paper is to provide a general formula for $\mathcal{K}_4(G_k(q))$, thus extending the results of Evans, Pulham and Sheehan to generalized Paley graphs and to prime powers. 
In the same way that a zero value for $\mathcal{K}_4(G(q))$ means $q$ is a strict lower bound for the two-color diagonal Ramsey number $R(4,4)$, we show that zero values for $\mathcal{K}_4(G_k(q))$ yield lower bounds for the multicolor diagonal Ramsey numbers $R_k(4)=R(4,4,\cdots,4)$. 
We also provide a general formula $\mathcal{K}_3(G_k(q))$ and give lower bounds for the multicolor diagonal Ramsey numbers $R_k(3)=R(3,3,\cdots,3)$. In both cases, we state explicitly these lower bounds for small $k$ and compare to other known bounds.

\section{Statement of Results}\label{sec_Results}
We present our results in two parts, the first relating to complete subgraphs of order four and then those relating to complete subgraphs of order three.
\subsection{Complete Subgraphs of Order Four}\label{sec_Results_Four}
Many of our results in this section will be stated in terms of Greene's finite field hypergeometric function \cite{G, G2}. Let $\widehat{\mathbb{F}^{*}_{q}}$ denote the group of multiplicative characters of $\mathbb{F}^{*}_{q}$. We extend the domain of $\chi \in \widehat{\mathbb{F}^{*}_{q}}$ to $\mathbb{F}_{q}$, by defining $\chi(0):=0$ (including the trivial character $\varepsilon$) and denote $\bar{\chi}$ as the inverse of $\chi$. We let $\varphi \in \widehat{\mathbb{F}^{*}_{q}}$ be the character of order two. For characters $A$ and $B$ of $\mathbb{F}_{q}^*$ we define the usual Jacobi sum $J(A, B):=\sum_{a \in \mathbb{F}_{q}} A(a) B(1-a)$ and define the symbol
$\binom{A}{B} := \frac{B(-1)}{q} J(A, \bar{B})$.
For characters $A_0,A_1,\dotsc, A_n$ and $B_1, \dotsc, B_n$ of $\mathbb{F}_{q}^*$ and 
$\lambda \in \mathbb{F}_{q}$, define the finite field hypergeometric function
\begin{equation*}
{_{n+1}F_n} {\left( \begin{array}{cccc} A_0, & A_1, & \dotsc, & A_n \\
\phantom{A_0} & B_1, & \dotsc, & B_n \end{array}
\Big| \; \lambda \right)}_{q}
:= \frac{q}{q-1} \sum_{\chi} \binom{A_0 \chi}{\chi} \binom{A_1 \chi}{B_1 \chi}
\dotsm \binom{A_n \chi}{B_n \chi} \chi(\lambda),
\end{equation*}
where the sum is over all multiplicative characters $\chi$ of $\mathbb{F}_{q}^*$.
Let $k \geq 2$ be an integer, let $q\equiv 1 \imod {k}$ be a prime power and let $\chi_k \in \widehat{\mathbb{F}^{*}_{q}}$ be a character of order $k$.
For $\vec{t}=(t_1,t_2,t_3,t_4,t_5) \in \left({\mathbb{Z}_{k}}\right)^{5}$ we define
\begin{equation*}
{_{3}F_2} \left( \vec{t} \; \big| \;  \lambda \right)_{q,k}
:=
{_{3}F_2}\biggl( \begin{array}{ccc} \chi_k^{t_1}, & \chi_k^{t_2}, & \chi_k^{t_3} \vspace{.05in}\\
\phantom{\chi_k^{t_1}} & \chi_k^{t_4}, & \chi_k^{t_5} \end{array}
\Big| \; \lambda \biggr)_{q}.
\end{equation*}

In our first result, we show that $\mathcal{K}_4(G_k(q))$ can be written quite simply in terms of ${_{3}F_2}$ finite field hypergeometric functions.
\begin{theorem}\label{thm_Main1}
Let $k \geq 2$ be an integer. Let $q$ be a prime power such that $q \equiv 1 \imod {k}$ if $q$ is even, or, $q \equiv 1 \imod {2k}$ if $q$ is odd. Then
\begin{equation*}
\mathcal{K}_4(G_k(q)) = 
\frac{q^3(q-1)}{24 \cdot k^6} 
\sum_{\vec{t} \in \left({\mathbb{Z}_{k}}\right)^{5}} {_{3}F_2} \left( \vec{t} \; \big| \;  1 \right)_{q,k}.
\end{equation*}
\end{theorem}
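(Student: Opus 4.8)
The plan is to count \emph{ordered} $4$-tuples of vertices spanning a $K_4$ and then divide by the symmetry factor $4!=24$. A set $\{a,b,c,d\}$ of four vertices spans a complete subgraph in $G_k(q)$ precisely when all $\binom{4}{2}=6$ pairwise differences lie in $S_k$; since $-1\in S_k$ the six conditions are unambiguous. Thus $24\,\mathcal{K}_4(G_k(q))$ equals the number of ordered tuples $(a,b,c,d)\in\mathbb{F}_q^4$ for which $a-b,\,a-c,\,a-d,\,b-c,\,b-d,\,c-d$ all belong to $S_k$.

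First I would encode membership in $S_k$ by characters. Since $S_k$ is the index-$k$ subgroup of $\mathbb{F}_q^{\ast}$ whose group of trivial-on-$S_k$ characters is cyclic of order $k$ generated by $\chi_k$, for every $x\in\mathbb{F}_q$ one has $\frac{1}{k}\sum_{j=0}^{k-1}\chi_k^{j}(x)=1$ if $x\in S_k$ and $0$ otherwise (using $\chi(0)=0$, which also forces the product below to vanish whenever two coordinates coincide, so distinctness is automatic). Substituting this indicator for each of the six differences and exploiting the translation invariance $(a,b,c,d)\mapsto(a+t,\dots,d+t)$ to fix $d=0$ (contributing a factor $q$), I reduce to
\[
24\,\mathcal{K}_4(G_k(q)) = \frac{q}{k^6}\sum_{\vec m \in (\mathbb{Z}_k)^6} T(\vec m),
\]
where $T(\vec m)=\sum_{a,b,c}\chi_k^{m_1}(a)\chi_k^{m_2}(b)\chi_k^{m_3}(c)\chi_k^{m_4}(a-b)\chi_k^{m_5}(a-c)\chi_k^{m_6}(b-c)$.

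The next step is a homogeneity argument that both selects the surviving terms and lowers the dimension. Replacing $(a,b,c)$ by $(ta,tb,tc)$ for $t\in\mathbb{F}_q^{\ast}$ shows $T(\vec m)=\chi_k^{m_1+\dots+m_6}(t)\,T(\vec m)$, so $T(\vec m)=0$ unless $m_1+\dots+m_6\equiv 0 \imod k$. On the surviving locus I set $a=cx$, $b=cy$ and sum over $c\in\mathbb{F}_q^{\ast}$; because the total character on $c$ is then trivial, each nonzero $c$ contributes equally and the $c=0$ term vanishes, giving $T(\vec m)=(q-1)\,D(\vec m)$ with
\[
D(\vec m)=\sum_{x,y}\chi_k^{m_1}(x)\,\chi_k^{m_2}(y)\,\chi_k^{m_4}(x-y)\,\chi_k^{m_5}(x-1)\,\chi_k^{m_6}(y-1).
\]
Eliminating the dependent exponent $m_6\equiv-(m_1+\dots+m_5)$ replaces the sum over $(\mathbb{Z}_k)^6$ by a sum over $\vec t\in(\mathbb{Z}_k)^5$, matching the index set in the statement.

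The heart of the argument, and the step I expect to be most delicate, is identifying the double character sum $D(\vec m)$ with Greene's ${}_3F_2$ evaluated at $1$. Expanding each symbol $\binom{A}{B}=\frac{B(-1)}{q}J(A,\bar B)$ in the definition of ${}_{3}F_{2}(\vec t\mid 1)_{q,k}$ into a Jacobi sum and collapsing the character sum over $\chi$ by orthogonality produces exactly a two-variable character sum of the shape of $D(\vec m)$; alternatively one invokes the known two-dimensional representation of ${}_3F_2$ at argument $1$. Carrying this out, one finds $D(\vec m)=q^2\,{}_{3}F_{2}(\vec t\mid 1)_{q,k}$ for the appropriate identification of the $\chi_k^{t_i}$ in terms of the $m_j$, with the normalizing powers of $q$ and any character-at-$-1$ signs disappearing because $\chi_k(-1)=1$. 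The main obstacles are getting this parameter dictionary correct (including the placement of $x-y$, $x-1$, $y-1$ and which exponents become top versus bottom parameters) and checking that the degenerate terms where some $\chi_k^{t_i}=\varepsilon$ are treated consistently on both sides under the $\varepsilon(0)=0$ convention. Combining $24\,\mathcal{K}_4 = \frac{q}{k^6}(q-1)\sum_{\vec t} D(\vec m) = \frac{q^3(q-1)}{k^6}\sum_{\vec t}{}_{3}F_{2}(\vec t\mid 1)_{q,k}$ and dividing by $24$ yields the claimed formula.
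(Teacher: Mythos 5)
Your proposal is correct and is essentially the paper's own proof in different packaging: your translation step (the factor $q$) and homogeneity step (the factor $q-1$, with only the exponent vectors satisfying $m_1+\cdots+m_6\equiv 0 \pmod{k}$ surviving) are precisely the character-sum form of the automorphism arguments in Lemma \ref{lem_Subgraph}, by which the paper reduces $\mathcal{K}_4(G_k(q))$ to the edge count of the subgraph $H^{1}_k(q)$, and both proofs conclude by matching the resulting two-variable character sum with Greene's representation (\ref{for_CharSum3F2}) of ${_{3}F_2}(\cdots\mid 1)_q$. Your flagged ``delicate step'' does check out: the parameter dictionary $(m_1,m_2,m_4,m_5,m_6)\mapsto (t_1,\ldots,t_5)$ is a linear bijection of $\left(\mathbb{Z}_{k}\right)^{5}$ and every $\chi(-1)$ sign is trivial because $\chi_k(-1)=1$, so the constrained sum over $\vec{m}$ is exactly the full sum over $\vec{t}$ in the statement.
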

\noindent 
Many of the summands in Theorem \ref{thm_Main1} can be simplified using known reduction formulae for finite field hypergeometric functions. Splitting off these terms yields our second result. 

\begin{theorem}\label{thm_Main2}
Let $k \geq 2$ be an integer. Let $q$ be a prime power such that $q \equiv 1 \imod {k}$ if $q$ is even, or, $q \equiv 1 \imod {2k}$ if $q$ is odd.  Then
\begin{multline*}
\mathcal{K}_4(G_k(q)) =
\frac{q(q-1)}{24 \cdot k^6} 
\Biggl[ 
10 \, \mathbb{R}_k(q)^2 + 5 \left( q-2k^2+1\right) \mathbb{R}_k(q)  -15 \, \mathbb{S}_k(q) +q^2
\\
-5 \left(2k^2-3k+2 \right) q
+ 15k^3-10k^2+1
+q^2 \sum_{\vec{t} \in X_k} {_{3}F_2} \left( \vec{t} \; \big| \;  1 \right)_{q,k}
\Biggr],
\end{multline*}
where $X_k := \{  (t_1,t_2,t_3,t_4,t_5) \in \left({\mathbb{Z}_{k}}\right)^{5} \mid t_1,t_2,t_3 \neq 0, t_4,t_5 \, ; \, t_1+t_2+t_3 \neq t_4+t_5 \}$,
\begin{equation*}
\mathbb{R}_k(q) := \sum_{\substack{s,t=1 \\s+t \not\equiv 0 \, (k)}}^{k-1} 
J \left( \chi_k^s, \chi_k^t \right)
\qquad \textup{and} \qquad
\mathbb{S}_k(q) := 
\sum_{\substack{s,t,v=1 \\ s+t, v+t, v-s \not\equiv 0 \, (k) }}^{k-1} 
J \left( \chi_k^s, \chi_k^t \right)
J \left(\bar{\chi_k}^s , \chi_k^v \right).
\end{equation*}
\end{theorem}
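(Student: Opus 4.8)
The plan is to begin with the expression in Theorem~\ref{thm_Main1} and to stratify the index set $(\mathbb{Z}_k)^5$ according to which degeneracy conditions each tuple $\vec t=(t_1,t_2,t_3,t_4,t_5)$ satisfies. Setting $A_0=\chi_k^{t_1}$, $A_1=\chi_k^{t_2}$, $A_2=\chi_k^{t_3}$ and $B_1=\chi_k^{t_4}$, $B_2=\chi_k^{t_5}$, the summand ${_3F_2}(\vec t \mid 1)_{q,k}$ simplifies under a known reduction formula precisely when one of three things happens: (i) a numerator character is trivial, i.e.\ some $t_i=0$ with $i\le 3$; (ii) a numerator character equals a denominator character, i.e.\ some $t_i\in\{t_4,t_5\}$ with $i\le 3$; or (iii) the datum is balanced in the Saalsch\"utzian sense, $t_1+t_2+t_3\equiv t_4+t_5\pmod{k}$. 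By construction the complement of $(\mathrm{i})\cup(\mathrm{ii})\cup(\mathrm{iii})$ is exactly $X_k$, so the sum over $X_k$ is carried along untouched and yields the final term $q^2\sum_{\vec t\in X_k}{_3F_2}(\vec t \mid 1)_{q,k}$; everything in the complement must be evaluated in closed form.

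For the degenerate strata I would invoke the standard catalogue of reductions for Greene's functions. In cases (i) and (ii) the relevant factor $\binom{A_0\chi}{\chi}$ or $\binom{A_i\chi}{B_i\chi}$ becomes essentially constant in the summation variable $\chi$ (using $\binom{\chi}{\chi}=\tfrac{\chi(-1)}{q}J(\chi,\bar\chi)$, which equals $-1/q$ for $\chi\neq\varepsilon$), collapsing the ${_3F_2}$ to a ${_2F_1}$ up to an explicit correction coming from the $\chi=\varepsilon$ term; a second application reduces the resulting ${_2F_1}$ at $1$ via the finite-field Gauss evaluation to a single Jacobi-sum symbol. The balanced case (iii) at $\lambda=1$ is handled by the finite-field analogue of the Pfaff--Saalsch\"utz theorem, which returns a product of Jacobi-sum symbols together with its characteristic correction terms. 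Throughout I would rewrite the resulting symbols as honest Jacobi sums $J(\chi_k^a,\chi_k^b)$ and use the evaluations $J(\varepsilon,\varepsilon)=q-2$, $J(\chi,\varepsilon)=-1$ and $J(\chi,\bar\chi)=-\chi(-1)$ for $\chi\neq\varepsilon$ to dispose of the sub-strata where two or more of (i)--(iii) hold simultaneously.

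The heart of the argument is then the bookkeeping: summing each reduced expression over its stratum and combining the strata by inclusion--exclusion over the overlaps of conditions (i), (ii) and (iii). The single Jacobi sums aggregate into $\mathbb{R}_k(q)$, with the extra summation freedom in certain strata producing the coefficient $5(q-2k^2+1)$; the products of two Jacobi sums split according to whether their indices are linked, contributing $\mathbb{R}_k(q)^2$ when the two sums are independent and $-15\,\mathbb{S}_k(q)$ when they share a (conjugated) character; and the fully degenerate tuples, where every reduction terminates in a constant, assemble the pure polynomial $q^2-5(2k^2-3k+2)q+15k^3-10k^2+1$. The integer coefficients $10$, $5$ and $15$ record the multiplicities with which the three numerator and two denominator slots realise each degeneracy pattern. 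Finally, matching the prefactor $\tfrac{q^3(q-1)}{24k^6}$ of Theorem~\ref{thm_Main1} against $\tfrac{q(q-1)}{24k^6}$ explains the factor $q^2$ in front of the residual $X_k$-sum and shows that the degenerate contributions enter divided by $q^2$, precisely the normalisation that the Jacobi-sum and polynomial terms carry.

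The main obstacle I expect is the inclusion--exclusion itself. The three conditions overlap in intricate ways---a tuple may, for instance, have $t_1=0$ while also being balanced, or may have two distinct numerator characters each matching a denominator character---and each overlap stratum needs its own chain of reductions, some requiring two successive steps and some terminating at a Jacobi-sum identity valid only away from $\varepsilon$. The finite-field Pfaff--Saalsch\"utz and Gauss evaluations also carry correction terms absent from their classical counterparts, so these must be tracked consistently. Confirming the exact cardinality and contribution of every stratum, and verifying that the leftover index set is \emph{exactly} $X_k$ with no accidental cancellation, is where the real care lies; once a stratum has been correctly isolated, the individual reductions and Jacobi-sum evaluations are routine.
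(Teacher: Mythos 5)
Your proposal follows essentially the same route as the paper's proof: starting from Theorem \ref{thm_Main1}, the paper splits the index set into exactly your strata (numerator trivial, numerator equal to a denominator, and the Saalsch\"utzian case $t_1+t_2+t_3 \equiv t_4+t_5$), evaluates each via the reduction formulae (\ref{for_3F2Red_1})--(\ref{for_3F2Red_6}) and the Gauss evaluation (\ref{for_2F1Red}) while excluding previously counted tuples, and assembles the Jacobi sums into $\mathbb{J}_0$ and $\mathbb{JJ}_0$ before converting to $\mathbb{R}_k(q)$ and $\mathbb{S}_k(q)$ via (\ref{for_J0R}) and (\ref{for_JJ0S}). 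The only differences are organizational (sequential exclusion across ten cases rather than explicit inclusion--exclusion, and aggregation through $\mathbb{J}_0, \mathbb{JJ}_0$ rather than directly into $\mathbb{R}_k, \mathbb{S}_k$), so your plan is correct and matches the paper's argument.
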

\noindent
This looks quite messy, which is the price we pay for a formula which holds for all $k$, but for a given $k$ it tidies up somewhat. 
Many of the summands that still remain in Theorem \ref{thm_Main2} are equal and we can establish an equivalence relation on the set $X_k$ to reduce the number of hypergeometric terms to equivalence class representatives. We discuss this process in Section \ref{sec_Orbits}. 
In particular, for small $k$, the formula reduces to relatively few terms, as we see in the following results.

\begin{cor}[${k=2}$]\label{cor_k2}
Let $q=p^r\equiv 1 \pmod {4}$ for a prime $p$. Write $q=x^2 +y^2$ for integers $x$ and $y$, such that $y$ is even, and $p \nmid x$ when $p \equiv 1 \imod{4}$. Then
\begin{equation*}
\mathcal{K}_4(G(q)) =
\frac{q(q-1)((q-9)^2-4y^2)}{2^9 \cdot 3}. 
\end{equation*}
Note that $y=0$ if (and only if) $p \equiv 3 \pmod 4$.
\end{cor}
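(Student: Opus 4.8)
The plan is to specialize Theorem~\ref{thm_Main2} to $k=2$ (so $\chi_2=\varphi$) and watch almost every term collapse. First I would check that both correction sums vanish. In $\mathbb{R}_2(q)$ the only candidate index is $s=t=1$, but then $s+t\equiv 0\imod 2$ violates the summation condition, so the index set is empty and $\mathbb{R}_2(q)=0$; similarly $s=t=v=1$ forces $s+t\equiv 0\imod 2$, so $\mathbb{S}_2(q)=0$. Next I would evaluate the elementary constants at $k=2$: the prefactor is $24\cdot 2^6=2^9\cdot 3$, the coefficient $-5(2k^2-3k+2)q$ becomes $-20q$, and $15k^3-10k^2+1=81$. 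Hence the bracket in Theorem~\ref{thm_Main2} reduces to $q^2-20q+81+q^2\sum_{\vec t\in X_2}{_{3}F_2}\left(\vec t\;\big|\;1\right)_{q,2}$.

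The second step is to pin down $X_2$. Since each of $t_1,t_2,t_3$ must be nonzero in $\mathbb{Z}_2$, we are forced to take $t_1=t_2=t_3=1$; the requirement that $t_1,t_2,t_3$ differ from $t_4,t_5$ then forces $t_4=t_5=0$, and the last condition $t_1+t_2+t_3\equiv 1\not\equiv 0\equiv t_4+t_5$ holds. Thus $X_2=\{(1,1,1,0,0)\}$ is a singleton, and the surviving hypergeometric sum is the single term
\[
F := {_{3}F_2}\biggl( \begin{array}{ccc} \varphi, & \varphi, & \varphi \\ \phantom{\varphi} & \varepsilon, & \varepsilon \end{array} \Big|\; 1 \biggr)_{q},
\]
so that $\mathcal{K}_4(G(q)) = \frac{q(q-1)}{2^9\cdot 3}\left(q^2-20q+81+q^2F\right)$.

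The crux is the evaluation of $F$: I claim $q^2F = 2q-4y^2$, equivalently $q^2F=4x^2-2q$ since $q=x^2+y^2$. To prove this I would expand $F=\frac{q}{q-1}\sum_\chi \binom{\varphi\chi}{\chi}^3$, rewrite each symbol as $\binom{\varphi\chi}{\chi}=\frac{\chi(-1)}{q}J(\varphi\chi,\bar\chi)$, and convert the Jacobi sums to Gauss sums via $J(\varphi\chi,\bar\chi)=g(\varphi\chi)\,g(\bar\chi)/g(\varphi)$ (where $g(\cdot)$ is the Gauss sum), handling $\chi=\varepsilon$ and $\chi=\varphi$ separately. Summing the cube over all $\chi$ and applying standard Gauss-sum identities (the relation $g(\chi)g(\bar\chi)=\chi(-1)q$ and the Hasse--Davenport relations) reduces the expression to quartic Jacobi sums, whose real part recovers $x$ in the representation $q=x^2+y^2$; the normalization that $y$ is even and $p\nmid x$ when $p\equiv 1\imod 4$ is precisely what selects the representation for which the evaluation is valid when $q$ is a proper prime power. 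I expect this Gauss-sum evaluation, together with the care needed to isolate the correct representation of $q$, to be the main obstacle; for $q=p$ prime it can be cross-checked directly against \eqref{for_EPS}.

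Finally I would substitute the evaluation into the bracket:
\[
q^2-20q+81+(2q-4y^2) = q^2-18q+81-4y^2 = (q-9)^2-4y^2,
\]
which yields $\mathcal{K}_4(G(q)) = \frac{q(q-1)((q-9)^2-4y^2)}{2^9\cdot 3}$, as claimed. For the closing remark, when $p\equiv 3\imod 4$ the hypothesis $q\equiv 1\imod 4$ forces the exponent $r$ to be even, so the only representation is $q=(p^{r/2})^2+0^2$ and $y=0$; conversely $p\equiv 1\imod 4$ produces a representation with $y\neq 0$, giving the stated dichotomy.
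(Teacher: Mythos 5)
Your skeleton matches the paper's proof exactly: specialize Theorem \ref{thm_Main2} to $k=2$, observe that $\mathbb{R}_2(q)=\mathbb{S}_2(q)=0$ because their index sets are empty, identify $X_2=\{(1,1,1,0,0)\}$, and finish with the algebra $q^2-20q+81+(2q-4y^2)=(q-9)^2-4y^2$; all of that, together with the $y=0$ remark via inertness of $p\equiv 3\imod 4$ in $\mathbb{Z}[i]$, is correct. The gap is precisely the step you yourself flag as the main obstacle: the evaluation $q^2\,{_{3}F_2}(\varphi,\varphi,\varphi;\varepsilon,\varepsilon \mid 1)_q = 2q-4y^2$ is never proved, only sketched. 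This identity is the entire mathematical content of the corollary beyond bookkeeping --- it is the paper's equation (\ref{for_3F2_qOno}), the prime-power generalization of \cite[Thm 4]{O} --- and a plan that says ``expand in Gauss sums, apply Hasse--Davenport, reduce to quartic Jacobi sums'' is a statement of intent, not an argument: none of the nontrivial cancellations are exhibited, and the passage from the character sum to the specific representation $q=x^2+y^2$ (in particular, why the normalization $p\nmid x$, $y$ even selects the correct value when $q$ is a proper prime power) is exactly where the work lies.

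The paper closes this step in two lines using tools it has already assembled: \cite[Thm 4.37]{G2} evaluates the ${_{3}F_2}(1)$ as $\binom{\chi_4}{\varphi}\binom{\chi_4}{\bar{\chi_4}}+\binom{\bar{\chi_4}}{\varphi}\binom{\bar{\chi_4}}{\chi_4}$, Proposition \ref{prop_JacXfer} converts this to $J(\chi_4,\chi_4)^2+J(\bar{\chi_4},\bar{\chi_4})^2$, and Lemma \ref{lem_JacOrder4}(2) (the Katre--Rajwade determination $J(\chi_4,\chi_4)=-x\pm iy$, valid for prime powers under exactly the stated normalization) gives $2x^2-2y^2=2q-4y^2$. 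Your Gauss-sum route could in principle be pushed through --- it essentially amounts to reproving Greene's evaluation theorem --- but as written it leaves the corollary unproved. Either carry out that computation in full, or replace the sketch with the citation chain above, which is available verbatim in the paper.
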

\noindent
An inductive algorithm for finding $x$ and $y$, when $p \equiv 1 \imod{4}$, is described in \cite[Prop 3.2]{McC11}. Corollary \ref{cor_k2} extends the result of Evans, Pulham and Sheehan, (\ref{for_EPS}) above, to prime powers. It is easy to see that $\mathcal{K}_4(G(17)) =0$ and so we reconfirm the lower bound for $R(4,4)$ of Greenwood and Gleason.

\begin{cor}\label{cor_Ramseyk2}
$18 \leq R(4,4)$.
\end{cor}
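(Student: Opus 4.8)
The plan is to obtain the bound by specializing Corollary \ref{cor_k2} to the single value $q=17$ and then invoking the self-complementary structure of the Paley graph. The first step is purely arithmetic: I need the representation $q=x^2+y^2$ with $y$ even that Corollary \ref{cor_k2} requires. Since $17 = 1^2 + 4^2$, the unique choice with $y$ even is $x=1$, $y=4$, and because $17 \equiv 1 \pmod 4$ we have $p \nmid x$ automatically (indeed $y\neq 0$, consistent with the remark that $y=0$ exactly when $p\equiv 3\pmod 4$).

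Next I substitute these values directly into the closed formula of Corollary \ref{cor_k2}. The only thing to check is that the factor $(q-9)^2 - 4y^2$ vanishes, which it does:
\[
\mathcal{K}_4(G(17)) = \frac{17\cdot 16\,\bigl((17-9)^2 - 4\cdot 4^2\bigr)}{2^9\cdot 3}
= \frac{17\cdot 16\cdot(64-64)}{2^9\cdot 3} = 0.
\]
Thus $G(17)$ contains no complete subgraph of order four.

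It then remains to translate this combinatorial vanishing into a Ramsey lower bound. As recalled in the Introduction, every Paley graph is self-complementary, so $\mathcal{K}_4\bigl(\overline{G(17)}\bigr)=\mathcal{K}_4(G(17))=0$ as well. Colouring each edge of the complete graph $K_{17}$ red or blue according to whether the corresponding difference lies in $S_2$ (the non-zero squares) or not yields a $2$-colouring in which the red subgraph is $G(17)$ and the blue subgraph is its complement. Since neither of these contains a $K_4$, this colouring has no monochromatic complete subgraph of order four. Hence $17 < R(4,4)$, which gives $18 \leq R(4,4)$.

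There is no serious obstacle here: the entire analytic difficulty has already been absorbed into Theorem \ref{thm_Main2} and its specialization Corollary \ref{cor_k2}, and the arithmetic of the sum-of-two-squares decomposition is immediate for $q=17$. The only conceptual point worth stating carefully is the passage from ``$\mathcal{K}_4(G(17))=0$'' to the Ramsey inequality, where self-complementarity is what guarantees that \emph{both} colour classes are simultaneously $K_4$-free; this is the standard construction of Greenwood and Gleason, now reconfirmed as a one-line consequence of the general formula.
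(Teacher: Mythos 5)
Your proof is correct and follows essentially the same route as the paper: the paper also obtains $\mathcal{K}_4(G(17))=0$ from Corollary \ref{cor_k2} (with $17=1^2+4^2$, so $(17-9)^2-4y^2=0$) and then passes to the Ramsey bound via the coloring of $K_{17}$ by quadratic residuosity, which for $k=2$ is exactly the self-complementarity argument you give (the paper's Section \ref{sec_Ramsey} isomorphism $a\mapsto\omega a$ between the two color classes is precisely the witness of self-complementarity).
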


\begin{cor}[${k=3}$]\label{cor_k3}
Let $q=p^r$ for a prime $p$, such that $q \equiv 1 \pmod {3}$ if $q$ is even, or, $q \equiv 1 \pmod {6}$ if $q$ is odd.  Let $\chi_3 \in \widehat{\mathbb{F}^{*}_{q}}$ be a character of order $3$. When $p \equiv 1 \imod{3}$, write $4q=c^2 +3d^2$ for integers $c$ and $d$, such that $c \equiv 1 \imod 3$, $d \equiv 0 \imod 3$ and $p \nmid c$. When $p \equiv 2 \imod{3}$, let $c=-2(-p)^{\frac{r}{2}}$. Then
\begin{equation*}
\mathcal{K}_4(G_3(q)) =
\frac{q(q-1)}{2^3 \cdot 3^7}. 
\Biggl[ 
q^2+5q(c-11)+10c^2-85c+316
+12 \, q^2 
{_{3}F_2} {\left( \begin{array}{cccc} \chi_3, & \chi_3, &  \bar{\chi_3} \\
\phantom{\chi_3} & \varepsilon, &  \varepsilon \end{array}
\Big| \; 1 \right)}_{q}
\Biggr].
\end{equation*}
\end{cor}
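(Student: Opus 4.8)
The plan is to specialize Theorem~\ref{thm_Main2} at $k=3$ and evaluate each ingredient. Since $24\cdot 3^{6}=2^{3}\cdot 3^{7}$, the outer constant already matches the one in the statement, so it remains only to reduce the bracketed expression to the claimed form. I would dispose of the elementary pieces first and treat the hypergeometric sum last.

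First I would evaluate $\mathbb{R}_3(q)$ and $\mathbb{S}_3(q)$ directly from their definitions. For $\mathbb{R}_3(q)$ the admissible pairs $(s,t)\in\{1,2\}^2$ with $s+t\not\equiv 0\imod{3}$ are precisely $(1,1)$ and $(2,2)$, so $\mathbb{R}_3(q)=J(\chi_3,\chi_3)+J(\bar{\chi_3},\bar{\chi_3})=2\,\mathrm{Re}\,J(\chi_3,\chi_3)$, using that $J(\bar{\chi_3},\bar{\chi_3})=\overline{J(\chi_3,\chi_3)}$. For $p\equiv 1\imod{3}$ the classical evaluation of the cubic Jacobi sum writes $J(\chi_3,\chi_3)=\tfrac12(c+d\sqrt{-3})$ with $4q=c^2+3d^2$, $c\equiv 1\imod{3}$ and $d\equiv 0\imod{3}$, whence $\mathbb{R}_3(q)=c$; for $p\equiv 2\imod{3}$ (so $r$ even) the sum is real and a Davenport--Hasse (pure Gauss sum) computation gives $J(\chi_3,\chi_3)=-(-p)^{r/2}$, i.e.\ $\mathbb{R}_3(q)=c=-2(-p)^{r/2}$, matching the normalization in the statement. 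For $\mathbb{S}_3(q)$ the condition $v-s\not\equiv 0$ forces $v\neq s$ in $\{1,2\}$, but then one of $s+t,\,v+t$ is necessarily $\equiv 0\imod{3}$; no index survives, so $\mathbb{S}_3(q)=0$.

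Substituting $k=3$, $\mathbb{S}_3(q)=0$ and $\mathbb{R}_3(q)=c$ into Theorem~\ref{thm_Main2}, the non-hypergeometric part of the bracket becomes $10c^2+5(q-17)c+q^2-55q+316$, which rearranges directly to $q^2+5q(c-11)+10c^2-85c+316$; this is routine. The substance lies in the sum $\sum_{\vec t\in X_3}{_3F_2}(\vec t\mid 1)_{q,3}$. I would first enumerate $X_3$ by cases on $(t_4,t_5)$: the pair $(0,0)$ contributes the six tuples $(t_1,t_2,t_3)\in\{1,2\}^3$ with $t_1+t_2+t_3\not\equiv 0$, each of $(0,1),(1,0),(0,2),(2,0),(1,1),(2,2)$ contributes exactly one, and $(1,2),(2,1)$ contribute none; hence $|X_3|=12$, consistent with the coefficient in the statement.

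The main obstacle is to show that these twelve terms all coincide with the single value ${_3F_2}(\vec t\mid 1)_{q,3}$ at the representative $\vec t=(1,1,2,0,0)$, i.e.\ the hypergeometric function displayed in the corollary. For this I would appeal to the equivalence relation on $X_k$ constructed in Section~\ref{sec_Orbits}, which is assembled from the parameter-permutation and $\lambda=1$ transformation symmetries of Greene's finite field hypergeometric function and under which ${_3F_2}(\vec t\mid 1)_{q,k}$ is constant on equivalence classes; the key verification is that all of $X_3$ forms a single such class. The sum then equals $12\,{_3F_2}(\vec t\mid 1)_{q,3}$ at $\vec t=(1,1,2,0,0)$, and combining this with the polynomial part yields the corollary.
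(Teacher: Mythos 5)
Your proposal is correct and follows essentially the same route as the paper: specialize Theorem~\ref{thm_Main2} at $k=3$, observe $\mathbb{S}_3(q)=0$, evaluate $\mathbb{R}_3(q)=J(\chi_3,\chi_3)+J(\bar{\chi_3},\bar{\chi_3})=c$ (the paper packages this as Lemma~\ref{lem_JacOrder3}, proving the $p\equiv 2\imod{3}$ case by an ideal-theoretic argument in $\mathbb{Z}[\omega]$ rather than your pure-Gauss-sum/Davenport--Hasse computation, but both are standard and give the same value), and collapse the twelve elements of $X_3$ to the single representative $(1,1,2,0,0)$ via the transformation group of Section~\ref{sec_Orbits}. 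The one step you flag but do not execute --- that $X_3$ is a single $\mathbb{T}_3$-orbit --- is exactly what the paper supplies through its Burnside count ($N_3=1$ in Table~\ref{tab_Orbits}), so your argument is complete once that machinery is invoked.
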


\begin{cor}\label{cor_Ramseyk3}
$128 \leq R(4,4,4)$.
\end{cor}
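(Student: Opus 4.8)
The plan is to combine the closed formula of Corollary~\ref{cor_k3} with the standard translation from a vanishing value of $\mathcal{K}_4$ into a Ramsey lower bound. First I would record the Ramsey reduction in general form. Partition the nonzero field elements into the $k$ cosets $\omega^i S_k$, $0 \le i \le k-1$, of $S_k$ in $\mathbb{F}_q^{*}$, and color the edge $\{a,b\}$ of the complete graph on vertex set $\mathbb{F}_q$ by the index $i$ for which $a-b \in \omega^i S_k$. Because $-1 \in S_k$ this coloring is well defined, and it uses exactly $k$ colors, giving a $k$-coloring of $K_q$. The color-$0$ class is precisely $G_k(q)$, and for each $i$ the additive bijection $x \mapsto \omega^{-i} x$ of $\mathbb{F}_q$ carries the color-$i$ class isomorphically onto the color-$0$ class; hence every color class is isomorphic to $G_k(q)$ and contains exactly $\mathcal{K}_4(G_k(q))$ complete subgraphs of order four. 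Consequently, if $\mathcal{K}_4(G_k(q)) = 0$ then this $k$-coloring of $K_q$ has no monochromatic $K_4$, which forces $q + 1 \le R_k(4)$. This is the exact analogue of the argument behind Corollary~\ref{cor_Ramseyk2}.

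Second, I would specialize to $k = 3$ and $q = 127$. Here $127$ is prime with $127 \equiv 1 \imod{6}$, so the hypotheses of Corollary~\ref{cor_k3} hold, and since $127 \equiv 1 \imod 3$ one must solve $4q = 508 = c^2 + 3d^2$ subject to $c \equiv 1 \imod 3$, $d \equiv 0 \imod 3$ and $127 \nmid c$; the admissible representation is $c = -20$, $d = 6$. Substituting $q = 127$ and $c = -20$ into the bracket of Corollary~\ref{cor_k3}, the polynomial part $q^2 + 5q(c-11) + 10c^2 - 85c + 316$ evaluates to $2460$. Thus establishing $\mathcal{K}_4(G_3(127)) = 0$ reduces to verifying that the single hypergeometric value satisfies
\begin{equation*}
{_{3}F_2} {\left( \begin{array}{cccc} \chi_3, & \chi_3, & \bar{\chi_3} \\
\phantom{\chi_3} & \varepsilon, & \varepsilon \end{array}
\Big| \; 1 \right)}_{127}
= -\frac{205}{127^2},
\end{equation*}
after which Corollary~\ref{cor_k3} gives $\mathcal{K}_4(G_3(127)) = 0$ and the general reduction above yields $128 \le R(4,4,4)$.

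The main obstacle is the evaluation of this ${_{3}F_2}$ at $\lambda = 1$ over $\mathbb{F}_{127}$. By the definition of Greene's function it is a finite character sum over the $126$ multiplicative characters of $\mathbb{F}_{127}^{*}$, expressible through Jacobi sums attached to the cubic residue symbol, so its value is an explicit number that can be computed directly; I expect this finite computation to return exactly $-205/127^2$ and hence to collapse the bracket to zero. An alternative route that sidesteps a raw character-sum computation would be to invoke the connection, flagged in the abstract, between these hypergeometric terms at prime $q$ and Fourier coefficients of modular forms, thereby pinning the value of the ${_{3}F_2}$ to a known $q$-expansion coefficient; confirming that such an identity delivers the required rational value at $q = 127$ would then be the delicate point.
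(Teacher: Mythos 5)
Your proposal is correct and takes essentially the same route as the paper: the identical coset-coloring reduction (each color class of $K_q$ isomorphic to $G_k(q)$, so $\mathcal{K}_4(G_k(q))=0$ forces $q<R_k(4)$), followed by Corollary \ref{cor_k3} at $q=127$ with $c=-20$, where the bracket $2460 + 12\,q^2\,{_{3}F_2}$ vanishes because the computed hypergeometric value is $-205/127^2$ (the paper records this same quantity, with a slight abuse of notation, as the normalized value $-205$, i.e.\ $q^2\,{_{3}F_2}=-205$). The only inessential difference is that the paper also searches all $q\leq 230$ to locate $127$ as the largest admissible prime power, which is not needed for the stated bound.
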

\noindent
It is known that $128 \leq R(4,4,4) \leq 230$ \cite {HI, R}. So again the (generalized) Paley graph matches the best known lower bound. However, this is no longer the case when $k=4$.

\begin{cor}[${k=4}$]\label{cor_k4}
Let $q=p^r\equiv 1 \pmod {8}$ for a prime $p$. Let $\varphi, \chi_4  \in \widehat{\mathbb{F}^{*}_{q}}$ be characters of order $2$ and $4$ respectively.
Write $q=x^2 +y^2$ for integers $x$ and $y$, such that $x \equiv 1 \pmod{4}$, and $p \nmid x$ when $p \equiv 1 \imod{4}$.
Write $q=u^2+2v^2$ for integers $u$ and $v$, such that $u \equiv 3 \pmod{4}$, and $p \nmid u$ when $p \equiv 1,3 \imod{8}$.
Then
\begin{multline*}
\mathcal{K}_4(G_4(q)) =
\frac{q(q-1)}{2^{15} \cdot 3}. 
\Biggl[ 
q^2-2q(15x+101)+304x^2+(930-40u)x+801+120u^2 \\
+12 \, q^2 
{_{3}F_2} {\left( \begin{array}{cccc} \chi_4, & \chi_4, &  \bar{\chi_4} \\
\phantom{\chi_3} & \varepsilon, &  \varepsilon \end{array}
\Big| \; 1 \right)}_{q}
+30 \, q^2 
{_{3}F_2} {\left( \begin{array}{cccc}  \chi_4, & \varphi, &  \varphi \\
\phantom{\chi_4} & \varepsilon, &  \varepsilon \end{array}
\Big| \; 1 \right)}_{q}
\Biggr].
\end{multline*}
\end{cor}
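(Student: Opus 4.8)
The plan is to specialize the general formula of Theorem~\ref{thm_Main2} to $k=4$ and evaluate each of its ingredients explicitly. Setting $k=4$, the prefactor becomes $\frac{q(q-1)}{24\cdot 4^{6}}=\frac{q(q-1)}{2^{15}\cdot 3}$, which already matches the stated constant, and the elementary terms in the bracket collapse to $q^{2}-110\,q+801$, leaving $5(q-31)\,\mathbb{R}_4(q)+10\,\mathbb{R}_4(q)^{2}-15\,\mathbb{S}_4(q)$ together with the residual hypergeometric sum $q^{2}\sum_{\vec t\in X_4}{_{3}F_2}(\vec t\mid 1)_{q,4}$. Thus the work splits into three tasks: (i) evaluate the Jacobi-sum quantities $\mathbb{R}_4(q)$ and $\mathbb{S}_4(q)$ in closed form; (ii) reduce the sum over $X_4$ to the two displayed hypergeometric terms; and (iii) reassemble everything, using $q=x^{2}+y^{2}$ and $q=u^{2}+2v^{2}$, into the stated polynomial.

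First I would evaluate $\mathbb{R}_4(q)$ and $\mathbb{S}_4(q)$. Every Jacobi sum occurring is built from characters of order dividing $4$, so I would sort the summands by character order. The purely quadratic contributions are elementary: since $q\equiv 1\imod 8$ we have $\varphi(-1)=1$ and hence $J(\varphi,\varphi)=-1$. The self-Jacobi sums $J(\chi_4,\chi_4)$ and their conjugates are given by the classical quartic evaluation in terms of $q=x^{2}+y^{2}$, while the mixed sums $J(\chi_4,\varphi)$ are given by the classical evaluation attached to $q=u^{2}+2v^{2}$; in each case the sign ambiguity is pinned down by the normalizations $x\equiv 1\imod 4$ and $u\equiv 3\imod 4$ imposed in the hypotheses. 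After substituting these and eliminating $y$ and $v$ via $y^{2}=q-x^{2}$ and $2v^{2}=q-u^{2}$, one sees why the final answer depends only on $x$ and $u$. The genuinely laborious piece is $\mathbb{S}_4(q)$, a triple sum of \emph{products} of such Jacobi sums: its cross terms are what produce the mixed monomial $-40\,ux$ and contribute to the $x^{2}$ and $u^{2}$ coefficients, so the bookkeeping of signs there must be handled with care.

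Next I would treat $\sum_{\vec t\in X_4}{_{3}F_2}(\vec t\mid 1)_{q,4}$. A direct enumeration gives $|X_4|=93$, so the sum must be organized rather than expanded term by term. Using the equivalence relation on $X_k$ developed in Section~\ref{sec_Orbits}, I would partition $X_4$ into orbits and replace the sum by class representatives weighted by orbit size. The orbits of $(1,1,3,0,0)$ and $(1,2,2,0,0)$ — corresponding to ${_{3}F_2}(\chi_4,\chi_4,\bar{\chi_4};\varepsilon,\varepsilon\mid 1)$ and ${_{3}F_2}(\chi_4,\varphi,\varphi;\varepsilon,\varepsilon\mid 1)$ — are expected to have sizes $12$ and $30$, accounting for the two surviving terms. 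Since $12+30=42<93$, the remaining orbits must collapse: each such representative should be reducible, via the finite-field hypergeometric reduction and transformation formulae already used to pass from Theorem~\ref{thm_Main1} to Theorem~\ref{thm_Main2}, to an expression in Jacobi sums, which is then evaluated as in the previous step and absorbed into the polynomial part.

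The main obstacle will be the combined bookkeeping of steps (i)–(iii): verifying that the Jacobi-sum evaluation of $\mathbb{S}_4(q)$, the weighted orbit representatives, and the Jacobi-sum values of the reducible hypergeometric orbits all assemble into exactly $q^{2}-2q(15x+101)+304x^{2}+(930-40u)x+801+120u^{2}$, with the two irreducible terms carrying precisely the coefficients $12$ and $30$. As in the $k=2$ and $k=3$ cases (Corollaries~\ref{cor_k2} and~\ref{cor_k3}, the latter leaving a single surviving hypergeometric term), I would guard against sign and normalization errors by checking the resulting formula numerically against a direct count of $\mathcal{K}_4(G_4(q))$ for the first several primes $q\equiv 1\imod 8$, such as $q=17,41,73,89$.
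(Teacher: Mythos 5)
Your overall skeleton (specialize Theorem~\ref{thm_Main2} to $k=4$, evaluate $\mathbb{R}_4(q)$ and $\mathbb{S}_4(q)$, organize the $93$ terms of $X_4$ into orbits) matches the paper, and your elementary simplifications (the prefactor $\frac{q(q-1)}{2^{15}\cdot 3}$ and the polynomial $q^2-110q+801$) are correct. However, there is a genuine error in where the $u$-terms come from. You assert that the mixed sums $J(\chi_4,\varphi)$ are given by the classical evaluation attached to $q=u^2+2v^2$, and hence that the cross terms of $\mathbb{S}_4(q)$ produce the monomial $-40ux$ and contribute to the $u^2$ coefficient. This is false: since $\varphi=\chi_4^2$, Proposition~\ref{prop_JacXfer} gives $J(\chi_4,\varphi)=\chi_4(-1)\,J(\chi_4,\bar{\chi_4}\bar{\varphi})=\chi_4(-1)\,J(\chi_4,\chi_4)$, so every Jacobi sum occurring in $\mathbb{R}_4(q)$ and $\mathbb{S}_4(q)$ is tied to the decomposition $q=x^2+y^2$. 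Indeed the paper obtains $\mathbb{R}_4(q)=3\bigl(J(\chi_4,\chi_4)+J(\bar{\chi_4},\bar{\chi_4})\bigr)=-6x$ and, via Proposition~\ref{prop_JacProdq} and Lemma~\ref{lem_JacOrder4}(2), $\mathbb{S}_4(q)=4q+J(\chi_4,\chi_4)^2+J(\bar{\chi_4},\bar{\chi_4})^2=4x^2+2q$; no $u$ appears at all in these quantities.

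The quantity $u$ enters only through the \emph{reducible} orbits of $X_4$, and your proposed mechanism for handling them cannot work: you say each remaining representative should reduce ``via the reduction and transformation formulae already used to pass from Theorem~\ref{thm_Main1} to Theorem~\ref{thm_Main2},'' but $X_k$ is by construction precisely the set of parameter tuples to which the reductions (\ref{for_3F2Red_1})--(\ref{for_3F2Red_6}) do \emph{not} apply. The paper instead invokes additional special evaluations of Greene, \cite[Thms 4.37 \& 4.38(ii)]{G2}, whose statements involve characters $\chi_8$ of order $8$ (square roots of $\chi_4$); combined with Lemma~\ref{lem_JacOrder8} on octic Jacobi sums these give
$q^2\bigl[{_{3}F_2}(\chi_4,\chi_4,\chi_4;\varepsilon,\varepsilon\,|\,1)+{_{3}F_2}(\bar{\chi_4},\bar{\chi_4},\bar{\chi_4};\varepsilon,\varepsilon\,|\,1)\bigr]=-4ux$ (and the same for the pair with lower parameter $\varphi$), together with $q^2\,{_{3}F_2}(\varphi,\chi_4,\bar{\chi_4};\varepsilon,\varepsilon\,|\,1)=4u^2-2q$. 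Summing over orbit sizes ($6+6$ and $4+4$ elements for the $-4ux$ pairs, $12+6+12=30$ elements for the $4u^2-2q$ value, plus the singleton $(2,2,2,0,0)$ giving $4x^2-2q$ as in the $k=2$ case) yields $-40ux+120u^2-60q+4x^2-2q$, which is the true source of the $(930-40u)x$ and $120u^2$ terms. A smaller correction: the coefficient $30$ on the surviving term ${_{3}F_2}(\chi_4,\varphi,\varphi;\varepsilon,\varepsilon\,|\,1)$ does not come from a single orbit of size $30$; the orbit of $(1,2,2,0,0)$ has size $24$, and its value coincides with that of the size-$6$ orbit of $(2,2,1,0,0)$ only through the column-permutation identity (\ref{for_3F2Per}). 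Without Greene's octic evaluations and Lemma~\ref{lem_JacOrder8} your outline produces no $u$-dependence whatsoever, while attributing it to $\mathbb{S}_4(q)$, so the gap is essential rather than a matter of bookkeeping.
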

\noindent 

\begin{cor}\label{cor_Ramseyk4}
$458 \leq R(4,4,4,4)$.
\end{cor}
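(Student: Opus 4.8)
The plan is to exhibit an explicit $4$-colouring of the edges of $K_{457}$ with no monochromatic $K_4$, which forces $R(4,4,4,4) > 457$. The first step is to record that $457$ is prime with $457 \equiv 1 \imod 8$, so the hypotheses of Corollary \ref{cor_k4} hold with $q = 457$, $k = 4$; in particular $8 \mid q-1$ guarantees $-1 \in S_4$, so $G_4(457)$ is a genuine undirected graph.

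The colouring is the standard one: assign to each edge $\{a,b\}$ of $K_{457}$ the colour $i \in \mathbb{Z}_4$ for which $a-b$ lies in the coset $\omega^i S_4$ of the $4$-th power residues. Since $-1 \in S_4$, this does not depend on the order of $a$ and $b$, so it is well defined. For each fixed $i$, multiplication by $\omega^{-i}$ is a bijection of $\mathbb{F}_{457}$ carrying the colour-$i$ edges exactly onto the colour-$0$ edges, i.e.\ onto $G_4(457)$; hence every colour class is isomorphic to $G_4(457)$. A monochromatic $K_4$ in this colouring would therefore produce a complete subgraph of order four in $G_4(457)$, so it suffices to prove $\mathcal{K}_4(G_4(457)) = 0$.

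To evaluate $\mathcal{K}_4(G_4(457))$ I would apply Corollary \ref{cor_k4}. Solving $457 = x^2 + y^2$ gives $457 = 21^2 + 4^2$ with $x = 21 \equiv 1 \imod 4$, and solving $457 = u^2 + 2v^2$ gives $457 = 13^2 + 2\cdot 12^2$; the congruence $u \equiv 3 \imod 4$ then forces $u = -13$ (note $457 \nmid 21$ and $457 \nmid 13$, as required). Substituting $x = 21$ and $u = -13$ collapses the polynomial part of the bracket in Corollary \ref{cor_k4} to the single integer $14220$, so $\mathcal{K}_4(G_4(457)) = 0$ is equivalent, for $q = 457$, to
\begin{equation*}
12 \, q^2 \, {_{3}F_2} {\left( \begin{array}{cccc} \chi_4, & \chi_4, &  \bar{\chi_4} \\
\phantom{\chi_4} & \varepsilon, &  \varepsilon \end{array}
\Big| \; 1 \right)}_{q}
+ 30 \, q^2 \, {_{3}F_2} {\left( \begin{array}{cccc}  \chi_4, & \varphi, &  \varphi \\
\phantom{\chi_4} & \varepsilon, &  \varepsilon \end{array}
\Big| \; 1 \right)}_{q}
= -14220 .
\end{equation*}

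The main obstacle is the evaluation of these two ${_3F_2}$ values over $\mathbb{F}_{457}$. Unlike the terms already peeled off into $\mathbb{R}_k$ and $\mathbb{S}_k$, they admit no elementary closed form, so one must compute them directly as character sums over the $456$ multiplicative characters of $\mathbb{F}_{457}^{*}$, or read them off from the associated Hecke eigenform coefficient at $457$ via the modular-forms correspondence promised in the abstract. Once these two rationals are in hand, verifying the displayed identity is a routine arithmetic check, giving $\mathcal{K}_4(G_4(457)) = 0$ and hence, by the colouring above, $458 \leq R(4,4,4,4)$.
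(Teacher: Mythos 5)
Your proposal is correct and follows essentially the same route as the paper: the coset-colouring argument showing each colour class of $K_{457}$ is isomorphic to $G_4(457)$ is exactly the paper's Section 6 reduction, and the application of Corollary \ref{cor_k4} with $q=457$, $x=21$, $u=-13$ matches the paper, whose computed values $q^2\,{_3F_2}(\chi_4,\chi_4,\bar{\chi_4};\varepsilon,\varepsilon\,|\,1)_{457}=290$ and $q^2\,{_3F_2}(\chi_4,\varphi,\varphi;\varepsilon,\varepsilon\,|\,1)_{457}=-590$ indeed satisfy $12(290)+30(-590)=-14220$, cancelling your polynomial part $14220$ and giving $\mathcal{K}_4(G_4(457))=0$.
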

\noindent
This falls short of the best known bound of $634 \leq R(4,4,4,4)$ \cite{XZER}. We discuss the relationship between the generalized Paley graphs and multicolor Ramsey numbers in Section \ref{sec_Ramsey}. When $q=p$ is prime, the values of the three hypergeometric functions in Corollaries \ref{cor_k3} \& \ref{cor_k4} correspond to $p$-th Fourier coefficients of certain non-CM modular forms. We discuss these relationships in Section \ref{sec_ModularForms}.


\subsection{Complete Subgraphs of Order Three}\label{sec_Results_Three}
We first provide a simple formula for $\mathcal{K}_3(G_k(q))$ in terms of Jacobi sums.
\begin{theorem}\label{thm_Main3}
Let $k \geq 2$ be an integer. Let $q$ be a prime power such that $q \equiv 1 \imod {k}$ if $q$ is even, or, $q \equiv 1 \imod {2k}$ if $q$ is odd. Then
\begin{equation*}
\mathcal{K}_3(G_k(q)) = 
\frac{q(q-1)}{6k^3} \, ( \mathbb{R}_k(q) + q -3k+1),
\end{equation*}
where $\mathbb{R}_k(q)$ is as defined in Theorem \ref{thm_Main2}.
\end{theorem}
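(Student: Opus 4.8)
The plan is to count ordered triples and divide by the symmetry factor. Since $-1 \in S_k$, adjacency in $G_k(q)$ is symmetric, so a set $\{a,b,c\}$ is a triangle precisely when $a-b,\,b-c,\,c-a \in S_k$; as these differences lie in $S_k \subseteq \mathbb{F}_q^{*}$, the three vertices are automatically distinct. Each unordered triangle arises from exactly $3!=6$ ordered triples, so $\mathcal{K}_3(G_k(q)) = \tfrac{1}{6} N$, where $N := \#\{(a,b,c)\in\mathbb{F}_q^3 : a-b,\,b-c,\,c-a \in S_k\}$. I would first write $N = \sum_{a,b,c}\mathbf{1}_{S_k}(a-b)\,\mathbf{1}_{S_k}(b-c)\,\mathbf{1}_{S_k}(c-a)$, then use translation invariance: setting $u=a-b$, $v=b-c$ (so $c-a=-(u+v)$) and summing over the free variable $a$ gives $N = q\sum_{u,v}\mathbf{1}_{S_k}(u)\,\mathbf{1}_{S_k}(v)\,\mathbf{1}_{S_k}(u+v)$, where I have used $-1\in S_k$ to replace $\mathbf{1}_{S_k}(-(u+v))$ by $\mathbf{1}_{S_k}(u+v)$.

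Next I would expand the indicator of the $k$-th powers as $\mathbf{1}_{S_k}(x) = \tfrac{1}{k}\sum_{i=0}^{k-1}\chi_k^{i}(x)$ (valid for all $x$, since $\chi(0)=0$), turning $N$ into a triple character sum $\tfrac{q}{k^3}\sum_{i,j,l=0}^{k-1}\sum_{u,v}\chi_k^{i}(u)\chi_k^{j}(v)\chi_k^{l}(u+v)$. The inner sum is standard: fixing $w=u+v\neq0$ and writing $u=wx$ gives $\sum_{u+v=w}\chi_k^{i}(u)\chi_k^{j}(v)=\chi_k^{i+j}(w)\,J(\chi_k^{i},\chi_k^{j})$, so the sum over $w$ collapses to $(q-1)\,J(\chi_k^{i},\chi_k^{j})$ when $i+j+l\equiv 0 \imod{k}$ and vanishes otherwise (the $w=0$ contribution is killed by $\chi_k^{l}(0)=0$). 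For each pair $(i,j)$ exactly one $l$ satisfies the congruence, so summing over $l$ yields $N = \tfrac{q(q-1)}{k^3}\sum_{i,j=0}^{k-1}J(\chi_k^{i},\chi_k^{j})$.

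It then remains to evaluate $\Sigma := \sum_{i,j=0}^{k-1}J(\chi_k^{i},\chi_k^{j})$ and match it to $\mathbb{R}_k(q) + q - 3k + 1$. I would split the double sum according to the degenerate configurations and invoke the standard Jacobi sum evaluations: $J(\varepsilon,\varepsilon)=q-2$ (the term $i=j=0$); $J(\varepsilon,\chi_k^{j})=J(\chi_k^{i},\varepsilon)=-1$ for nontrivial argument (the $2(k-1)$ terms with exactly one of $i,j$ zero); and $J(\chi_k^{i},\bar{\chi_k^{i}})=-\chi_k^{i}(-1)$ for the $k-1$ terms with $i,j\neq 0$ and $i+j\equiv 0\imod{k}$. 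Because $-1\in S_k$ forces $\chi_k^{i}(-1)=1$, the last family contributes $-\sum_{i=1}^{k-1}\chi_k^{i}(-1)=-(k-1)$. The surviving terms, those with $i,j\neq 0$ and $i+j\not\equiv 0\imod{k}$, are exactly $\mathbb{R}_k(q)$. Collecting, $\Sigma = (q-2) - 2(k-1) - (k-1) + \mathbb{R}_k(q) = \mathbb{R}_k(q) + q - 3k + 1$, and dividing $N$ by $6$ gives the claimed formula.

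The computation is essentially a bookkeeping exercise once the reduction to Jacobi sums is in place, so the only real care is in the degenerate cases: correctly handling the trivial-character Jacobi sums, confirming that the $w=0$ term genuinely vanishes, and---the one genuinely load-bearing hypothesis---using $-1\in S_k$ both to symmetrize the sign in the geometric reduction and to evaluate $\sum_{i=1}^{k-1}\chi_k^{i}(-1)=k-1$. I expect no serious obstacle; the main risk is an off-by-constant error in the tally of degenerate terms, which a direct check against the $k=2$ case (where $\mathbb{R}_2(q)$ is a single Jacobi sum) would catch.
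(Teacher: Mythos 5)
Your proof is correct and takes essentially the same route as the paper: the paper also fixes a vertex at $0$ via translation invariance (recasting triangles of $G_k(q)$ as edges of the induced subgraph $H_k(q)$ on the $k$-th power residues), expands the resulting count by character orthogonality, and lands on exactly the same quantity $\mathbb{J}_0(q,k)=\sum_{s,t=0}^{k-1}J(\chi_k^s,\chi_k^t)$ with the same evaluation $\mathbb{J}_0(q,k)=\mathbb{R}_k(q)+q-3k+1$. Your direct count of ordered triples simply merges the paper's two-step reduction (its Lemma on subgraph counts plus the degree computation for $H_k(q)$) into one character-sum calculation, so the two arguments coincide in substance.
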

\noindent
For small $k$, Theorem \ref{thm_Main3} simplifies to simple closed formulae. 
\begin{cor}[${k=2}$]\label{cor_3k2}
Let $q=p^r\equiv 1 \pmod {4}$ for a prime $p$. Then
\begin{equation*}
\mathcal{K}_3(G(q)) =
\frac{q(q-1)(q-5)}{2^4 \cdot 3}. 
\end{equation*}
\end{cor}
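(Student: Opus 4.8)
The plan is to specialize Theorem~\ref{thm_Main3} to the case $k=2$ and evaluate the resulting Jacobi-sum quantity $\mathbb{R}_2(q)$ explicitly. By Theorem~\ref{thm_Main3},
\begin{equation*}
\mathcal{K}_3(G(q)) = \frac{q(q-1)}{6 \cdot 2^3} \, \bigl( \mathbb{R}_2(q) + q - 5 \bigr),
\end{equation*}
so the entire task reduces to showing that $\mathbb{R}_2(q) = 0$, after which the stated closed form $\frac{q(q-1)(q-5)}{2^4 \cdot 3}$ follows immediately upon simplifying the constant $6 \cdot 2^3 = 48 = 2^4 \cdot 3$.

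To compute $\mathbb{R}_2(q)$, I would return to its definition from Theorem~\ref{thm_Main2}. For $k=2$ the only character in play is $\chi_2 = \varphi$, the quadratic character, and the summation indices $s,t$ range over $\{1,\dots,k-1\} = \{1\}$ subject to $s+t \not\equiv 0 \imod{k}$. First I would check the constraint: with $k=2$ and $s=t=1$, we have $s+t = 2 \equiv 0 \imod 2$, so the single candidate index pair is \emph{excluded}. Hence the sum $\mathbb{R}_2(q)$ is empty, giving $\mathbb{R}_2(q) = 0$ directly. Substituting this into the displayed formula above completes the proof.

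The main point requiring care is the interpretation of the congruence condition $s+t \not\equiv 0 \imod{k}$ in the degenerate range $1 \le s,t \le k-1$ when $k=2$: one must confirm that no admissible pairs survive, so that the Jacobi-sum sum genuinely vanishes rather than producing a nontrivial quadratic Jacobi sum $J(\varphi,\varphi)$. Since $J(\varphi,\varphi)$ is in fact not well-behaved (it equals $-\varphi(-1)$ and would contribute) this exclusion is exactly what makes the formula clean, so verifying that the index set is empty is the crux. Everything else is arithmetic simplification of the leading constant.

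Thus the only genuine step is the observation that $X$-type constraints force an empty summation; no hypergeometric reductions or deeper character-sum identities are needed, which is why the $k=2$ case yields the simplest of all the corollaries. The anticipated obstacle is merely bookkeeping in the index conventions, not any substantive estimate.
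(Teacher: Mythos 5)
Your proposal is correct and matches the paper's own proof: the paper likewise deduces Corollary \ref{cor_3k2} by taking $k=2$ in Theorem \ref{thm_Main3} and noting that $\mathbb{R}_2(q)=0$ because no indices satisfy the constraint $s+t\not\equiv 0 \imod{2}$ in the degenerate range $s,t\in\{1\}$. Your identification of the empty index set as the crux, and the arithmetic $6\cdot 2^3 = 2^4\cdot 3$ with $q-3k+1=q-5$, are exactly the paper's reasoning.
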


\begin{cor}[${k=3}$]\label{cor_3k3}
Let $q=p^r$ for a prime $p$, such that $q \equiv 1 \pmod {3}$ if $q$ is even, or, $q \equiv 1 \pmod {6}$ if $q$ is odd. When $p \equiv 1 \imod{3}$, write $4q=c^2 +3d^2$ for integers $c$ and $d$, such that $c \equiv 1 \imod 3$, $d \equiv 0 \imod 3$ and $p \nmid c$. When $p \equiv 2 \imod{3}$, let $c=-2(-p)^{\frac{r}{2}}$. Then
\begin{equation*}
\mathcal{K}_3(G_3(q)) =
\frac{q(q-1)(q+c-8)}{2 \cdot 3^4}.
\end{equation*}
\end{cor}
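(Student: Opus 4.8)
The plan is to specialize Theorem \ref{thm_Main3} to $k=3$ and then evaluate the resulting Jacobi-sum quantity $\mathbb{R}_3(q)$ in closed form. Setting $k=3$ in Theorem \ref{thm_Main3} gives
\[
\mathcal{K}_3(G_3(q)) = \frac{q(q-1)}{6\cdot 27}\bigl(\mathbb{R}_3(q) + q - 8\bigr) = \frac{q(q-1)}{2\cdot 3^4}\bigl(\mathbb{R}_3(q) + q - 8\bigr),
\]
so the entire corollary reduces to establishing the single identity $\mathbb{R}_3(q) = c$.

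First I would expand $\mathbb{R}_3(q)$ directly from its definition in Theorem \ref{thm_Main2}. The index set consists of pairs $(s,t)$ with $s,t \in \{1,2\}$ and $s+t \not\equiv 0 \imod 3$; only $(s,t)=(1,1)$ and $(s,t)=(2,2)$ survive, so $\mathbb{R}_3(q) = J(\chi_3,\chi_3) + J(\bar{\chi_3},\bar{\chi_3})$. Since $\overline{J(A,B)} = J(\bar{A},\bar{B})$, the two terms are complex conjugates, whence $\mathbb{R}_3(q) = 2\operatorname{Re} J(\chi_3,\chi_3)$; in particular this value is independent of the choice of cubic character $\chi_3$, as it must be.

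It then remains to identify $2\operatorname{Re} J(\chi_3,\chi_3)$ with the integer $c$, which I would do in the two congruence cases separately. When $p \equiv 1 \imod 3$, I would invoke the classical evaluation of the cubic Jacobi sum: $J(\chi_3,\chi_3)$ lies in $\mathbb{Z}[\omega]$ (with $\omega$ a primitive cube root of unity) and has absolute value $\sqrt{q}$, so writing $J(\chi_3,\chi_3) = \tfrac{1}{2}\bigl(c + d\sqrt{-3}\bigr)$ the relation $|J(\chi_3,\chi_3)|^2 = q$ becomes $4q = c^2 + 3d^2$, while the standard congruence $J(\chi_3,\chi_3) \equiv -1 \imod 3$ forces $c \equiv 1 \imod 3$ and $d \equiv 0 \imod 3$, pinning down $c = 2\operatorname{Re} J(\chi_3,\chi_3)$ uniquely. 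When $p \equiv 2 \imod 3$, the constraint $q \equiv 1 \imod 3$ forces $r$ to be even; here the cubic Gauss sum over $\mathbb{F}_q$ is pure, and a Hasse--Davenport lifting argument (starting from the evaluation $J(\chi_3,\chi_3)=p$ over $\mathbb{F}_{p^2}$) shows that $J(\chi_3,\chi_3)$ is the rational integer $-(-p)^{r/2}$, so that $\mathbb{R}_3(q) = -2(-p)^{r/2} = c$. Substituting $\mathbb{R}_3(q) = c$ into the displayed formula completes the proof.

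The main obstacle is the bookkeeping of signs and normalizations in the cubic Jacobi-sum evaluation. One must verify that the stated normalization of the representation $4q = c^2 + 3d^2$, together with the congruence $c \equiv 1 \imod 3$, selects exactly the value $2\operatorname{Re} J(\chi_3,\chi_3)$ rather than its negative or a Galois conjugate, and that the purity/Hasse--Davenport argument in the $p \equiv 2 \imod 3$ case yields the sign $-(-p)^{r/2}$ consistently with these same conventions; everything else is routine once the identity $\mathbb{R}_3(q)=c$ is secured.
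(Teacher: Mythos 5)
Your proposal is correct and structurally it is the paper's own proof: specialize Theorem \ref{thm_Main3} to $k=3$, note that only $(s,t)=(1,1),(2,2)$ survive the condition $s+t \not\equiv 0 \imod{3}$ so that $\mathbb{R}_3(q) = J(\chi_3,\chi_3)+J(\bar{\chi_3},\bar{\chi_3})$, and reduce everything to the single identity $\mathbb{R}_3(q)=c$, which is exactly the paper's Lemma \ref{lem_JacOrder3}. For $p \equiv 1 \imod{3}$ your derivation ($J \in \mathbb{Z}[\omega]$, $|J|^2=q$, $J \equiv -1 \imod{3}$, hence $4q=c^2+3d^2$ with $c \equiv 1$, $d \equiv 0 \imod{3}$) is precisely the content of the result the paper imports from Parnami--Agrawal--Rajwade, so there is no real difference there. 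Where you genuinely diverge is the case $p \equiv 2 \imod{3}$: the paper argues ideal-theoretically --- $p$ is inert in $\mathbb{Z}[\omega]$, so $J\bar{J}=q$ forces $(J)=(p^{r/2})$ as ideals, and the congruence $J \equiv -1 \pmod{(1-\omega)^2}$ then singles out the generator $-(-p)^{r/2}$ --- whereas you invoke purity of the cubic Gauss sum and a Hasse--Davenport lift from $\mathbb{F}_{p^2}$. Your route is sound and the signs do cooperate: with $r=2m$, the Hasse--Davenport relation gives $J_q = (-1)^{m-1} J_{p^2}^{\,m} = (-1)^{m-1}p^m = -(-p)^{r/2}$, matching the stated $c$. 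The one caveat is that your base case $J(\chi_3,\chi_3)=p$ over $\mathbb{F}_{p^2}$ is itself a nontrivial evaluation --- it requires either the purity theorem or exactly the inertness-plus-congruence argument the paper uses --- so the lifting argument relocates, rather than eliminates, that input, and the paper's version handles all even $r$ in one stroke.
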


\begin{cor}[${k=4}$]\label{cor_3k4}
Let $q=p^r\equiv 1 \pmod {8}$ for a prime $p$.
Write $q=x^2 +y^2$ for integers $x$ and $y$, such that $x \equiv 1 \pmod{4}$, and $p \nmid x$ when $p \equiv 1 \imod{4}$.
Then
\begin{equation*}
\mathcal{K}_3(G_4(q)) =
\frac{q(q-1)(q-6x-11)}{2^{7} \cdot 3}. 
\end{equation*}
\end{cor}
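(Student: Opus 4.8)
\textbf{Proof proposal for Corollary \ref{cor_3k4}.} The plan is to specialize Theorem \ref{thm_Main3} to $k=4$ and then evaluate the Jacobi-sum quantity $\mathbb{R}_4(q)$ explicitly. With $k=4$ we have $6k^3 = 2^7\cdot 3$ and $q-3k+1 = q-11$, so Theorem \ref{thm_Main3} reads $\mathcal{K}_3(G_4(q)) = \frac{q(q-1)}{2^7\cdot 3}\bigl(\mathbb{R}_4(q)+q-11\bigr)$. Thus the entire corollary reduces to proving the single identity $\mathbb{R}_4(q) = -6x$, where $q = x^2+y^2$ with $x\equiv 1 \imod 4$.

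First I would expand $\mathbb{R}_4(q) = \sum J(\chi_4^s,\chi_4^t)$ over the pairs $s,t\in\{1,2,3\}$ with $s+t\not\equiv 0 \imod 4$. The excluded pairs are exactly those with $s+t=4$, namely $(1,3),(2,2),(3,1)$, leaving the six terms indexed by $(1,1),(3,3),(1,2),(2,1),(2,3),(3,2)$. Writing $\chi_4^2=\varphi$ and $\chi_4^3=\bar{\chi_4}$ and using the symmetry $J(A,B)=J(B,A)$ together with $\overline{J(A,B)}=J(\bar A,\bar B)$ and $\bar\varphi=\varphi$, these collapse: $(1,1)$ and $(3,3)$ pair into $2\operatorname{Re}J(\chi_4,\chi_4)$, while $(1,2),(2,1),(2,3),(3,2)$ pair into $4\operatorname{Re}J(\chi_4,\varphi)$. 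Hence $\mathbb{R}_4(q) = 2\operatorname{Re}J(\chi_4,\chi_4)+4\operatorname{Re}J(\chi_4,\varphi)$.

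Next I would collapse the two Jacobi sums into one via Gauss sums. Using $J(A,B)=g(A)g(B)/g(AB)$, the relation $g(\chi_4)g(\bar{\chi_4})=\chi_4(-1)q$, and $g(\varphi)^2=\varphi(-1)q$, a short manipulation gives $J(\chi_4,\varphi) = J(\chi_4,\chi_4)\,g(\varphi)^2/\bigl(\chi_4(-1)\,q\bigr)$. Since $q\equiv 1 \imod 8$ forces $\chi_4(-1)=\varphi(-1)=1$, this simplifies to $J(\chi_4,\varphi)=J(\chi_4,\chi_4)$, and therefore $\mathbb{R}_4(q) = 6\operatorname{Re}J(\chi_4,\chi_4)$. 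It then remains to invoke the classical quartic evaluation: for the representation $q=x^2+y^2$ normalized by $x\equiv 1 \imod 4$ (with $p\nmid x$ when $p\equiv 1 \imod 4$), one has $\operatorname{Re}J(\chi_4,\chi_4) = -x$, whence $\mathbb{R}_4(q)=-6x$ and the formula follows. (One should also note that replacing $\chi_4$ by $\bar{\chi_4}$ only conjugates these sums, leaving the real parts, and hence the final answer, unchanged.)

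The main obstacle I anticipate is precisely this last evaluation: fixing the correct sign and normalization so that $\operatorname{Re}J(\chi_4,\chi_4)$ equals $-x$ rather than $+x$ — the classical statement gives a real part $a$ with $a\equiv 3\imod 4$, which matches $a=-x$ under the paper's convention $x\equiv 1\imod 4$ — and extending this from the prime case to general prime powers $q=p^r$. The latter I would handle through the Hasse–Davenport lifting relation for Gauss sums, distinguishing the cases $p\equiv 1\imod 4$ (where $p\nmid x$ pins down the representation) and $p\equiv 3\imod 4$ (where $y=0$ and $x=\pm p^{r/2}$, the sign again fixed by the congruence condition). The earlier algebraic reductions are routine; the care lies entirely in tracking these sign conventions consistently.
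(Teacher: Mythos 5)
Your proposal is correct and takes essentially the same route as the paper: the paper likewise specializes Theorem \ref{thm_Main3} to $k=4$ and reduces everything to $\mathbb{R}_4(q)=3\bigl(J(\chi_4,\chi_4)+J(\bar{\chi_4},\bar{\chi_4})\bigr)=-6x$, using Proposition \ref{prop_JacXfer} (in place of your Gauss-sum manipulation) to identify $J(\chi_4,\varphi)$ with $J(\chi_4,\chi_4)$, and Lemma \ref{lem_JacOrder4} (the Katre--Rajwade evaluation) for the value $-2x$. The prime-power concern you raise is already absorbed into that lemma, since the cited Katre--Rajwade result is stated for all $q=p^r$, so no separate Hasse--Davenport argument is needed.
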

\noindent 
It is easy to see from Corollaries \ref{cor_3k2}-\ref{cor_3k4} that $\mathcal{K}_3(G(5)) = \mathcal{K}_3(G_3(16)) = \mathcal{K}_3(G_4(41)) = 0$ which leads to the following corresponding lower bounds for multicolor Ramsey numbers.
\begin{cor}\label{cor_Ramsey3k234}
$6 \leq R(3,3)$,
$17 \leq R(3,3,3)$, and
$42 \leq R(3,3,3,3)$.
\end{cor}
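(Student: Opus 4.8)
The plan is to combine the explicit formulae of Corollaries \ref{cor_3k2}--\ref{cor_3k4} with the general principle, developed in Section \ref{sec_Ramsey}, that a vanishing value of $\mathcal{K}_3(G_k(q))$ forces $q < R_k(3)$. First I would recall why this principle holds: the $k$ cosets of $S_k$ in $\mathbb{F}_q^{\ast}$ induce a $k$-coloring of the edges of the complete graph on vertex set $\mathbb{F}_q$, in which the edge $ab$ receives the color of the coset containing $a-b$ (well-defined since $-1 \in S_k$). Multiplication by a primitive element is a graph automorphism that cyclically permutes the $k$ colors, so each color class contains exactly $\mathcal{K}_3(G_k(q))$ monochromatic triangles. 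Hence if $\mathcal{K}_3(G_k(q)) = 0$, then no color class contains a triangle, exhibiting a triangle-free $k$-coloring of $K_q$ and giving the bound $q + 1 \leq R_k(3)$.

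With this reduction in place, the proof amounts to exhibiting, for each of $k = 2, 3, 4$, a single admissible order $q$ at which the relevant cubic factor in Corollaries \ref{cor_3k2}--\ref{cor_3k4} vanishes. For $k=2$ I would take $q=5$, where the factor $q-5$ is zero. For $k=3$ I would take $q=16=2^4$: since $p=2 \equiv 2 \imod{3}$, Corollary \ref{cor_3k3} prescribes $c = -2(-2)^{2} = -8$, making $q+c-8 = 0$. For $k=4$ I would take $q=41 = 5^2+4^2$, so that the normalization $x \equiv 1 \imod{4}$ forces $x=5$ and hence $q-6x-11 = 0$. Substituting these into Corollaries \ref{cor_3k2}--\ref{cor_3k4} gives $\mathcal{K}_3(G(5)) = \mathcal{K}_3(G_3(16)) = \mathcal{K}_3(G_4(41)) = 0$, and the principle above then yields the three stated bounds.

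Since all the analytic difficulty is already absorbed into Corollaries \ref{cor_3k2}--\ref{cor_3k4}, I do not expect any serious obstacle. The only delicate points are bookkeeping: I would verify that each chosen $q$ satisfies the congruence hypotheses of its corollary (for instance $16 \equiv 1 \imod{3}$ and $41 \equiv 1 \imod{8}$), and that the auxiliary integers are fixed under the stated normalizations --- in particular the sign and congruence conventions for $c$ when $p \equiv 2 \imod{3}$, and the condition $x \equiv 1 \imod{4}$ that pins down the correct sign of $x$ in $q = x^2 + y^2$. These conventions are precisely what make the cubic factors vanish exactly.
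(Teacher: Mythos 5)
Your proposal is correct and matches the paper's own argument: the paper likewise uses the coset-coloring of $K_q$ from Section \ref{sec_Ramsey} (each color class being a copy of $G_k(q)$ via multiplication by a power of $\omega$, so $\mathcal{K}_3(G_k(q))=0$ implies $q < R_k(3)$) and then observes that the factors $q-5$, $q+c-8$, and $q-6x-11$ in Corollaries \ref{cor_3k2}--\ref{cor_3k4} vanish at $q=5$, $q=16$ (with $c=-8$), and $q=41$ (with $x=5$) respectively. Your verification of the congruence hypotheses and normalizations is exactly the bookkeeping the paper leaves implicit.
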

\noindent
It is known that $R(3,3)=6$ and $R(3,3,3) =17$ \cite {GG}. However, the bound for $R(3,3,3,3)$ implied by the Paley graph falls short of the best known bound of $51 \leq R(3,3,3,3) \leq 62$ \cite{C,FKR}.
As mentioned in Section \ref{sec_Results_Four}, we will discuss the relationship between the generalized Paley graphs and multicolor Ramsey numbers in Section \ref{sec_Ramsey}. Also, when $q$ is prime, $c$ and $x$ in Corollaries \ref{cor_3k3} and \ref{cor_3k4} can be related to the Fourier coefficients of certain modular forms.   
 We discuss these relationships in Section \ref{sec_ModularForms}.
 
%



\section{Preliminaries}\label{sec_Prelim}

\subsection{Jacobi Sums}\label{sec_Prelim_GJSums}
We first recall some well-known properties of Jacobi sums. For further details see \cite{BEW}, noting that we have adjusted results therein to take into account $\varepsilon(0)=0$. 

\begin{prop}\label{prop_JacBasic}
For non-trivial $\chi \in \widehat{\mathbb{F}^{*}_{q}}$ we have
\begin{itemize}
\item[(a)] $J(\varepsilon,\varepsilon)=q-2$;
\item[(b)] $J(\varepsilon, \chi)=-1$; and
\item[(c)] $J(\chi,\bar{\chi}) = -\chi(-1)$.
\end{itemize}
\end{prop}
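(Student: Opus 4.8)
The plan is to derive all three identities directly from the defining formula $J(A,B) = \sum_{a \in \mathbb{F}_q} A(a)B(1-a)$, using only the stated convention $\chi(0) = 0$ and the orthogonality relation $\sum_{a \in \mathbb{F}_q^{*}} \chi(a) = 0$, valid for non-trivial $\chi$. Each part is a short computation, so the work is really just bookkeeping of which terms survive the convention together with a single change of variables in the last part.

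For part (a), I would note that $\varepsilon(a)\varepsilon(1-a)$ equals $1$ precisely when both $a \neq 0$ and $1-a \neq 0$, i.e.\ $a \notin \{0,1\}$, and vanishes otherwise. Counting the $q-2$ admissible values of $a$ immediately gives $J(\varepsilon,\varepsilon) = q-2$. For part (b), the factor $\varepsilon(a)$ restricts the sum to $a \neq 0$, so $J(\varepsilon,\chi) = \sum_{a \neq 0} \chi(1-a)$; reindexing by $b = 1-a$ (a bijection of $\mathbb{F}_q \setminus \{0\}$ onto $\mathbb{F}_q \setminus \{1\}$) turns this into $\sum_{b \neq 1} \chi(b)$, and orthogonality then yields $\sum_{b \in \mathbb{F}_q^{*}} \chi(b) - \chi(1) = -1$.

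For part (c), the terms $a=0$ and $a=1$ contribute nothing since $\chi(0) = 0$, so $J(\chi,\bar{\chi}) = \sum_{a \neq 0,1} \chi(a)\,\bar{\chi}(1-a) = \sum_{a \neq 0,1} \chi\bigl(a/(1-a)\bigr)$, where I have combined the characters using $\bar{\chi}(1-a) = \chi\bigl((1-a)^{-1}\bigr)$. The substitution $t = a/(1-a)$, with inverse $a = t/(1+t)$, is the one genuinely non-trivial step: I would verify that it is a bijection from $\mathbb{F}_q \setminus \{0,1\}$ onto $\mathbb{F}_q \setminus \{0,-1\}$ (the excluded value $a=1$ corresponds to the pole of the map, $a=0$ to $t=0$, and $t=-1$ has no preimage). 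Orthogonality then gives $\sum_{t \neq 0,-1} \chi(t) = \sum_{t \in \mathbb{F}_q^{*}} \chi(t) - \chi(-1) = -\chi(-1)$, as required.

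The only step demanding any care is the change of variables and the accompanying index bookkeeping in part (c); parts (a) and (b) are immediate from the convention and orthogonality. Since $-1 \neq 0$, no degenerate case arises in applying $\chi(0)=0$ to the term $\chi(-1)$, so the argument is uniform across all admissible $q$.
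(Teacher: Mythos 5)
Your proof is correct. Note that the paper itself does not prove this proposition at all: it recalls the identities as well known, citing Berndt--Evans--Williams, with only the remark that the values there must be adjusted to the convention $\varepsilon(0)=0$. Your direct verification from the definition $J(A,B)=\sum_{a\in\mathbb{F}_q}A(a)B(1-a)$ is therefore a self-contained substitute for that citation, and it is exactly the right argument. The comparison is worth making precise: parts (a) and (b) are the two places where the convention genuinely matters (under the convention $\varepsilon(0)=1$ used in the cited reference one gets $J(\varepsilon,\varepsilon)=q$ and $J(\varepsilon,\chi)=0$ instead of $q-2$ and $-1$), and your term-by-term bookkeeping of which summands vanish handles this correctly; part (c) is insensitive to the convention, since the $a=0$ and $a=1$ terms die because $\chi$ and $\bar{\chi}$ are non-trivial, and your substitution $t=a/(1-a)$, checked to be a bijection from $\mathbb{F}_q\setminus\{0,1\}$ onto $\mathbb{F}_q\setminus\{0,-1\}$, together with orthogonality of the non-trivial character $\chi$, gives $-\chi(-1)$ as required. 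The one point to state explicitly (you use it silently in (b)) is that the $b=0$ term in $\sum_{b\neq 1}\chi(b)$ contributes $\chi(0)=0$, which is again the paper's convention at work.
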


\begin{prop}\label{prop_JacXfer}
For $\chi, \psi \in \widehat{\mathbb{F}^{*}_{q}}$,
$J(\chi, \psi) = \chi(-1) J(\chi, \bar{\chi} \bar{\psi})$.
\end{prop}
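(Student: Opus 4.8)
The plan is to prove the identity by a direct change of variables in the defining sum $J(\chi,\psi) = \sum_{a \in \mathbb{F}_q} \chi(a)\psi(1-a)$. Under our convention $\chi(0)=\psi(0)=0$, the terms $a=0$ and $a=1$ vanish, so I may restrict to $a \in \mathbb{F}_q \setminus \{0,1\}$. The key observation is that the M\"obius-type map $a \mapsto b := \frac{a}{a-1}$ is an involution of $\mathbb{F}_q \setminus \{0,1\}$ onto itself: it is defined whenever $a \neq 1$, one checks that $b=1$ is never attained and that $b=0$ occurs only when $a=0$, and solving for $a$ gives $a = \frac{b}{b-1}$, confirming that the map is its own inverse on the restricted domain.

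Next I would rewrite the summand under this substitution. From $a = \frac{b}{b-1}$ one computes $1-a = \frac{-1}{b-1} = \frac{1}{1-b}$. Using multiplicativity of characters (valid since $b, 1-b \neq 0$ on the restricted domain) together with $\bar{\chi}(-1) = \chi(-1)$, this yields $\chi(a) = \chi(b)\,\bar{\chi}(b-1) = \chi(-1)\,\chi(b)\,\bar{\chi}(1-b)$ and $\psi(1-a) = \bar{\psi}(1-b)$. Substituting and factoring out $\chi(-1)$ gives
\[
J(\chi,\psi) = \chi(-1) \sum_{b} \chi(b)\,\bar{\chi}(1-b)\,\bar{\psi}(1-b) = \chi(-1)\sum_{b} \chi(b)\,(\bar{\chi}\bar{\psi})(1-b),
\]
which is exactly $\chi(-1)\, J(\chi, \bar{\chi}\bar{\psi})$, as claimed.

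The argument is essentially bookkeeping, so there is no serious obstacle; the points requiring care are (i) confirming that the boundary indices $a \in \{0,1\}$ genuinely drop out under the convention $\chi(0)=0$, so that the substitution acts on a set it bijects, and (ii) correctly propagating the sign factors, in particular recognizing $\bar{\chi}(-1) = \chi(-1)$ (since $\chi(-1) \in \{\pm 1\}$). As an alternative route avoiding the change of variables, one could instead invoke the Gauss sum factorization $J(A,B) = g(A)g(B)/g(AB)$ together with $g(A)g(\bar{A}) = A(-1)\,q$; both sides then reduce to $g(\chi)g(\psi)/g(\chi\psi)$ after the $\psi(-1)$ contributions cancel. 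That approach, however, requires separate handling of the cases where $\chi$, $\psi$, or $\chi\psi$ is trivial, which makes the substitution proof cleaner for the fully general statement.
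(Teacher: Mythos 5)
Your proof is correct. The paper itself gives no proof of this proposition (it is recalled from Berndt--Evans--Williams \cite{BEW} as a standard fact, adjusted for the convention $\chi(0)=0$), and your argument via the involution $a \mapsto a/(a-1)$ of $\mathbb{F}_q \setminus \{0,1\}$ is precisely the standard one: the boundary terms $a \in \{0,1\}$ drop out, the sign $\bar{\chi}(-1)=\chi(-1)$ is propagated correctly, and your check that the identity survives the convention $\varepsilon(0)=0$ (so that it holds even when $\chi$, $\psi$, or $\chi\psi$ is trivial) is exactly the adjustment the paper alludes to.
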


\begin{prop}\label{prop_JacProdq}
For non-trivial $\chi, \psi \in \widehat{\mathbb{F}^{*}_{q}}$ with $\chi \psi$ non-trivial,
$J(\chi, \psi) J(\bar{\chi}, \bar{\psi}) =q$.
\end{prop}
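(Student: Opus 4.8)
The plan is to reduce the identity to a statement about absolute values and then evaluate it using Gauss sums. First I would observe that, since the values of any multiplicative character are roots of unity, complex conjugation sends $\chi(a)$ to $\bar{\chi}(a)$; applying this termwise to the defining sum $J(\chi,\psi)=\sum_{a}\chi(a)\psi(1-a)$ gives $\overline{J(\chi,\psi)}=J(\bar{\chi},\bar{\psi})$. Hence the left-hand side of the claim equals $J(\chi,\psi)\,\overline{J(\chi,\psi)}=|J(\chi,\psi)|^{2}$, and it suffices to prove $|J(\chi,\psi)|^{2}=q$.

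To compute this absolute value I would pass to Gauss sums. Write $q=p^{f}$, fix a primitive $p$-th root of unity $\zeta_{p}$, and for $\chi\in\widehat{\mathbb{F}^{*}_{q}}$ set $g(\chi)=\sum_{a\in\mathbb{F}_{q}}\chi(a)\,\zeta_{p}^{\mathrm{Tr}(a)}$, where $\mathrm{Tr}$ is the trace from $\mathbb{F}_{q}$ to $\mathbb{F}_{p}$. The two ingredients I need are: (i) the Gauss--Jacobi relation $J(\chi,\psi)=g(\chi)g(\psi)/g(\chi\psi)$, valid precisely when $\chi$, $\psi$, and $\chi\psi$ are all non-trivial; and (ii) the identity $|g(\chi)|^{2}=g(\chi)\overline{g(\chi)}=q$ for every non-trivial $\chi$. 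Both are standard and recorded in \cite{BEW}. Granting these, the three hypotheses guarantee that each of $g(\chi)$, $g(\psi)$, $g(\chi\psi)$ has modulus $\sqrt{q}$ (and in particular $g(\chi\psi)\neq 0$, so the quotient in (i) is legitimate), whence
\[
|J(\chi,\psi)|^{2}=\frac{|g(\chi)|^{2}\,|g(\psi)|^{2}}{|g(\chi\psi)|^{2}}=\frac{q\cdot q}{q}=q,
\]
which completes the argument.

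The reduction above is purely formal, so the real content lies in the two Gauss-sum facts. For (ii), one expands $g(\chi)\overline{g(\chi)}$ as a double sum over $\mathbb{F}_{q}^{*}$, substitutes $a=bt$, and collapses the inner additive-character sum by orthogonality; the non-triviality of $\chi$ enters through $\sum_{t\neq 0}\chi(t)=0$, which is exactly what cancels the degenerate diagonal contribution and leaves $q$. For (i), one multiplies $g(\chi)g(\psi)=\sum_{c}\zeta_{p}^{\mathrm{Tr}(c)}\sum_{a+b=c}\chi(a)\psi(b)$, and on the slices $c\neq 0$ substitutes $a=ct$, $b=c(1-t)$ to pull out $(\chi\psi)(c)\,J(\chi,\psi)$, recognizing the remaining sum as $g(\chi\psi)J(\chi,\psi)$; the hypothesis that $\chi\psi$ is non-trivial is used to discard the $c=0$ term via $\sum_{a\neq 0}(\chi\psi)(a)=0$. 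The main obstacle, then, is not any single deep estimate but the careful bookkeeping of these boundary terms: throughout, the conventions $\chi(0)=0$ and $\varepsilon(0)=0$ flagged before Proposition \ref{prop_JacBasic} must be tracked precisely, since it is exactly the degenerate terms that the three non-triviality hypotheses are present to control.
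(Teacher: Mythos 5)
Your proof is correct: the reduction via $\overline{J(\chi,\psi)}=J(\bar{\chi},\bar{\psi})$ together with the Gauss-sum factorization $J(\chi,\psi)=g(\chi)g(\psi)/g(\chi\psi)$ and $|g(\chi)|^{2}=q$ is the standard argument, and you invoke each non-triviality hypothesis exactly where it is needed. The paper gives no proof of its own here — it records the proposition as well known and defers to \cite{BEW} — and your argument is essentially the one found in that reference, so the approaches agree.
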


\noindent
Recall, if we let $k \geq 2$ be an integer, $q\equiv 1 \pmod {k}$ be a prime power and $\chi_k \in \widehat{\mathbb{F}^{*}_{q}}$ be a character of order $k$,
then for $b \in \mathbb{F}_q^*$, we have the orthogonal relation \cite[p11]{BEW}
\begin{equation}\label{for_OrthRel}
\frac{1}{k} \sum_{t=0}^{k-1} \chi_k^t(b)
=
\begin{cases}
1 & \textup{ if $b$ is a $k$-th power},\\
0 & \textup{ if $b$ is not a $k$-th power}.
\end{cases}
\end{equation}

We now develop some preliminary results which will be used in later sections. As a straightforward consequence of Propositions \ref{prop_JacBasic} and \ref{prop_JacProdq} we see that
\begin{equation}\label{for_JJ1conj}
\sum_{s,t=1}^{k-1} J(\chi_k^s,\chi_k^t) J(\bar{\chi_k}^s,\bar{\chi_k}^t) = (k-1) \left[(k-2)q+1 \right].
\end{equation}
We define the quantities
\begin{equation*}
\mathbb{J}_0(q,k) := \sum_{s,t=0}^{k-1} J(\chi_k^s,\chi_k^t) \qquad \textup{and} \qquad \mathbb{JJ}_0(q,k) := \sum_{s,t,v=0}^{k-1} J(\chi_k^s,\chi_k^t) J(\bar{\chi_k}^s,\chi_k^v)
\end{equation*}
which appear often in our reckonings. Using Propositions \ref{prop_JacBasic} \& \ref{prop_JacXfer} and (\ref{for_JJ1conj}) it is a straightforward exercise to show that
\begin{equation}\label{for_J0R}
\mathbb{J}_0(q,k) = \mathbb{R}_k(q) +q-3k+1
\end{equation}
and
\begin{equation}\label{for_JJ0S}
\mathbb{JJ}_0(q,k) = \mathbb{S}_k(q) - 4 \, \mathbb{R}_k(q) + q^2 + k(k-5)q +k^2+6k-3,
\end{equation}
where $\mathbb{R}_k(q)$ and $\mathbb{S}_k(q)$ are as defined in Theorem \ref{thm_Main2}. 

\begin{lemma}\label{lem_JacOrder3}
Let $q=p^r$ for a prime $p$, such that $q \equiv 1 \pmod {3}$ if $q$ is even, or, $q \equiv 1 \pmod {6}$ if $q$ is odd.  Let $\chi_3 \in \widehat{\mathbb{F}^{*}_{q}}$ be a character of order $3$. When $p \equiv 1 \imod{3}$, write $4q=c^2 +3d^2$ for integers $c$ and $d$, such that $c \equiv 1 \imod 3$, $d\equiv 0 \imod 3$ and $p \nmid c$. When $p \equiv 2 \imod{3}$, let $c=-2(-p)^{\frac{r}{2}}$. Then
$$J(\chi_3,\chi_3) + J(\bar{\chi_3},\bar{\chi_3})=c.$$
\end{lemma}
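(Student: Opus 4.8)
The plan is to prove that $J(\chi_3,\chi_3)+J(\bar{\chi_3},\bar{\chi_3})=c$ by identifying this sum with a well-known integer-valued trace attached to the cubic Gauss/Jacobi sum, and then matching it to the stated representation $4q=c^2+3d^2$. First I would observe that $J(\bar{\chi_3},\bar{\chi_3})=\overline{J(\chi_3,\chi_3)}$, so the sum is $2\,\mathrm{Re}\,J(\chi_3,\chi_3)$, a real algebraic integer. Since $\chi_3$ takes values in $\mathbb{Z}[\zeta_3]$, the Jacobi sum $J(\chi_3,\chi_3)$ lies in $\mathbb{Z}[\zeta_3]$, and by Proposition~\ref{prop_JacProdq} (with $\chi=\psi=\chi_3$, noting $\chi_3^2=\bar{\chi_3}$ is nontrivial) we have $J(\chi_3,\chi_3)\,J(\bar{\chi_3},\bar{\chi_3})=q$, i.e. $|J(\chi_3,\chi_3)|^2=q$. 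Writing $J(\chi_3,\chi_3)=\tfrac{1}{2}(c'+d'\sqrt{-3})$ in the standard basis for $\mathbb{Z}[\zeta_3]$, the norm condition gives exactly $4q=c'^2+3d'^2$, and the sum in question is $c'$.

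The substantive content is therefore to pin down the \emph{sign and congruence class} of $c'$ so that it equals the particular $c$ specified by the normalizations $c\equiv 1\imod 3$, $d\equiv 0\imod 3$, $p\nmid c$ (in the split case $p\equiv 1\imod 3$) and $c=-2(-p)^{r/2}$ (in the inert case $p\equiv 2\imod 3$). In the case $p\equiv 1\imod 3$, I would invoke the classical theory of the cubic Jacobi sum: $J(\chi_3,\chi_3)$ is a primary prime in $\mathbb{Z}[\zeta_3]$ of norm $p$ (when $r=1$), and its real trace is well known to satisfy $2\,\mathrm{Re}\,J(\chi_3,\chi_3)\equiv -1\imod 3$ after suitable normalization; the congruence $c\equiv 1\imod 3$ in the statement is chosen to fix the ambiguity $c\mapsto -c$, and $d\equiv 0\imod 3$ removes the remaining conjugation ambiguity among the three characters of order $3$. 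The key point is that $4q=c^2+3d^2$ with $c\equiv 1\imod 3$ determines $c$ uniquely up to the sign that is then fixed by $p\nmid c$, and this matches the trace of the cubic Jacobi sum. For general $r$ I would use the Hasse--Davenport relation to lift from $\mathbb{F}_p$ to $\mathbb{F}_q$.

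For the inert case $p\equiv 2\imod 3$, there is no character of order $3$ on $\mathbb{F}_p^*$, so $r$ must be even and $\chi_3$ is a character of order $3$ on $\mathbb{F}_q^*$; here I would compute $J(\chi_3,\chi_3)$ directly via Hasse--Davenport, reducing the Jacobi sum to a power of the Gauss sum over the quadratic subfield, which yields $J(\chi_3,\chi_3)=(-p)^{r/2}\zeta$ for a cube root of unity $\zeta$, and then the normalization forces $\zeta=1$ so that $J(\chi_3,\chi_3)+J(\bar{\chi_3},\bar{\chi_3})=2(-p)^{r/2}\,\mathrm{Re}\,\zeta=-2(-p)^{r/2}=c$.

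The main obstacle I anticipate is not the norm computation but the careful bookkeeping of the sign and mod-$3$ normalizations: one must verify that the conventions $c\equiv 1\imod 3$, $d\equiv 0\imod 3$, $p\nmid c$ select precisely the representation corresponding to $2\,\mathrm{Re}\,J(\chi_3,\chi_3)$ rather than to a Galois conjugate or its negative. I would lean on the standard primary-normalization results for cubic Gauss and Jacobi sums (as in \cite{BEW}) to settle these, treating the congruence matching as the crux of the argument.
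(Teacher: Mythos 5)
Your overall strategy---reduce the sum to $2\,\mathrm{Re}\,J(\chi_3,\chi_3)$, use Proposition~\ref{prop_JacProdq} to get the norm equation $4q=c'^2+3d'^2$, and then pin down the sign and congruence class of $c'$ via primary normalization---is in outline the same route the paper takes (the paper cites \cite[Prop 1]{PAR} for the split case, and for the inert case combines the primary condition from \cite[Ch 2]{BEW} with uniqueness of the primary generator, \cite[Lem 5]{PAR}). However, both of your normalization claims, which you correctly identify as the crux of the argument, are wrong as stated, and each error breaks the conclusion.

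In the split case $p\equiv 1\imod 3$, the standard primary condition is $J(\chi_3,\chi_3)\equiv -1 \imod 3$ in $\mathbb{Z}[\omega]$, which gives $J(\chi_3,\chi_3)+J(\bar{\chi_3},\bar{\chi_3})\equiv -2\equiv +1\imod 3$; your claim that the trace is $\equiv -1\imod 3$ would match it to $-c$ rather than $c$ and flip the sign of the lemma. In the inert case $p\equiv 2\imod 3$, the ambiguity left by the ideal equation $\bigl(J(\chi_3,\chi_3)\bigr)=\bigl(p^{r/2}\bigr)$ is a \emph{unit} of $\mathbb{Z}[\omega]$, i.e.\ a sixth root of unity, not a cube root of unity as you assert; moreover your conclusion is internally inconsistent, since $\zeta=1$ in $J=(-p)^{r/2}\zeta$ gives $J+\bar{J}=+2(-p)^{r/2}$, not $-2(-p)^{r/2}$. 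This is not cosmetic: for $q=16$ one has $J(\chi_3,\chi_3)=-(-2)^2=-4$, which is not of the form $4\zeta$ for any cube root of unity $\zeta$, so your parametrization can never produce the correct value. What actually fixes the unit is the primary condition $J(\chi_3,\chi_3)\equiv -1 \imod{(1-\omega)^2}$ together with the observation that $-(-p)^{r/2}\equiv -1 \imod{(1-\omega)^2}$ (using $3=(1-\omega)^2(1+\omega)$), whence uniqueness of the primary generator gives $J(\chi_3,\chi_3)=-(-p)^{r/2}$ and the sum $-2(-p)^{r/2}=c$. To repair your proof you must replace both congruence claims with these primary conditions; the Hasse--Davenport lifting you propose can be made to work but only adds further sign bookkeeping of exactly the kind that went astray here.
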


\begin{proof}
When $p \equiv 1 \imod{3}$, \cite[Prop 1]{PAR} tells us that 
$$J(\chi_3,\chi_3) + J(\bar{\chi_3},\bar{\chi_3}) = \left(\frac{c}{2} + \frac{d}{2}\sqrt{-3}\right) + \left(\frac{c}{2} - \frac{d}{2}\sqrt{-3}\right) = c.$$
Now consider the case when $p \equiv 2 \imod{3}$. Note $r$ is even in this case. Let $\omega =\frac{-1+\sqrt{-3}}{2}$. From \cite[Ch 2]{BEW} we have
\begin{enumerate}
\item $J(\chi_3,\chi_3) \in \mathbb{Z}[\omega]$;
\item $J(\chi_3,\chi_3)J(\bar{\chi_3},\bar{\chi_3}) = q$; and 
\item $J(\chi_3,\chi_3) \equiv -1 \pmod{(1-\omega)^2}$. 
\end{enumerate}
Noting (1) and the fact that $p$ is inert in $\mathbb{Z}[\omega]$, as $p \equiv 2 \imod{3}$, by (2) we must have $(J(\chi_3,\chi_3))=(J(\bar{\chi_3},\bar{\chi_3}) )=(p^{\frac{r}{2}})$ as ideals. Now $(p^{\frac{r}{2}})=(-(-p)^{\frac{r}{2}})$ and 
\begin{align*}
-(-p)^{\frac{r}{2}} 
&\equiv -1 \pmod{3}\\
&\equiv -1 \pmod{(1-\omega)^2}
\end{align*}
as $3= (1-\omega)^2 (1+\omega)$. Then by \cite[Lem 5]{PAR}, $J(\chi_3,\chi_3) = J(\bar{\chi_3}, \bar{\chi_3}) = -(-p)^{\frac{r}{2}}$ and so $J(\chi_3,\chi_3) + J(\bar{\chi_3},\bar{\chi_3})  = -2(-p)^{\frac{r}{2}} = c$. 
\end{proof}

\begin{lemma}[\cite{KR}]\label{lem_JacOrder4}
Let $q=p^r\equiv 1 \pmod {4}$ for a prime $p$. Write $q=x^2 +y^2$ for integers $x$ and $y$, such that $x\equiv 1 \pmod{4}$, and $p \nmid x$ when $p \equiv 1 \imod{4}$. Then
\begin{enumerate}
\item $J(\chi_4,\chi_4) + J(\bar{\chi_4},\bar{\chi_4}) = -2x$; and
\item $J(\chi_4,\chi_4)^2 + J(\bar{\chi_4},\bar{\chi_4})^2 = 2x^2-2y^2 = 4x^2-2q = 2q-4y^2$.
\end{enumerate}
\end{lemma}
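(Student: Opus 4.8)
The idea is to view $J(\chi_4,\chi_4)$ as a Gaussian integer and extract both identities from its real and imaginary parts. First I would observe that, since $\chi_4$ takes values in $\{0,\pm 1,\pm i\}$, the sum $J(\chi_4,\chi_4)=\sum_{a\in\mathbb{F}_q}\chi_4(a)\chi_4(1-a)$ lies in $\mathbb{Z}[i]$, and that applying complex conjugation to each fourth root of unity gives $\overline{J(\chi_4,\chi_4)}=J(\bar{\chi_4},\bar{\chi_4})$. Writing $J(\chi_4,\chi_4)=a+bi$ with $a,b\in\mathbb{Z}$, we then have $J(\bar{\chi_4},\bar{\chi_4})=a-bi$; and since $\chi_4^2=\varphi$ is non-trivial, Proposition~\ref{prop_JacProdq} applied with $\chi=\psi=\chi_4$ yields $J(\chi_4,\chi_4)\,J(\bar{\chi_4},\bar{\chi_4})=a^2+b^2=q$.

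With this notation both parts become formal. Part (1) asserts $J(\chi_4,\chi_4)+J(\bar{\chi_4},\bar{\chi_4})=2a=-2x$, i.e.\ $a=-x$, while part (2) reads $J(\chi_4,\chi_4)^2+J(\bar{\chi_4},\bar{\chi_4})^2=(a+bi)^2+(a-bi)^2=2(a^2-b^2)$. Granting $a=-x$, the relation $a^2+b^2=q=x^2+y^2$ forces $b^2=y^2$, whence $2(a^2-b^2)=2x^2-2y^2$; the remaining two forms $4x^2-2q$ and $2q-4y^2$ then follow by substituting $q=x^2+y^2$. Thus the entire content of the lemma is concentrated in the single identity $a=-x$.

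Proving $a=-x$ is the crux and the step I expect to be the main obstacle, as it is exactly the classical problem of pinning down the sign and parity of a quartic Jacobi sum. When $p\equiv 1\imod 4$ one uses the finer arithmetic of $\mathbb{Z}[i]$: the standard determination shows that the real part $a$ is odd and $b$ is even, matching the parities of $x$ and $y$ forced by $x\equiv 1\imod 4$ (so $x$ odd, $y$ even), and the primary-prime congruence for $J(\chi_4,\chi_4)$ gives $a\equiv -1\imod 4$, consistent with $-x\equiv -1\imod 4$. Together with the essentially unique representation $q=x^2+y^2$, made unique here by the normalization $p\nmid x$, this forces $a=-x$ (and $b=\pm y$). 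This is precisely the determination carried out in \cite{KR}, whose normalization is chosen to match the hypotheses of the lemma, so in the write-up I would invoke it directly.

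Finally, the case $p\equiv 3\imod 4$ (where $r$ is necessarily even, $y=0$ and $x=\pm p^{r/2}$) would be handled separately, in close parallel with the $p\equiv 2\imod 3$ case of Lemma~\ref{lem_JacOrder3}. Here $p$ is inert in $\mathbb{Z}[i]$, so the Gaussian integer $J(\chi_4,\chi_4)$ of norm $q=p^r$ is a unit times $p^{r/2}$; ruling out the imaginary units via a congruence of $J(\chi_4,\chi_4)$ modulo a suitable power of $(1+i)$ shows it is real, so $b=0=y$ and $a=\pm p^{r/2}$, and the same congruence fixes the sign so that $a=-x$ with $x\equiv 1\imod 4$. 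In this case part (2) reads $2a^2=2q=2x^2$, consistent with $y=0$.
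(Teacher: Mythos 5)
Your proposal is correct and is essentially the paper's proof: the paper likewise reduces both parts to the determination of $J(\chi_4,\chi_4)$ as a Gaussian integer, citing \cite[Props 1 \& 2]{KR} for the fact that $J(\chi_4,\chi_4) = -x \pm iy$ under the stated normalization (uniqueness of $x$, and of $y$ up to sign), and then obtains both identities by taking complex conjugates, exactly as you do. The only difference is that \cite{KR} already covers the case $p \equiv 3 \pmod{4}$, where $x=(-p)^{r/2}$, $y=0$ is the unique admissible representation, so the separate inert-prime argument you sketch for that case---sound, and parallel to how the paper must argue in Lemma \ref{lem_JacOrder3} because \cite{PAR} treats only $p \equiv 1 \pmod{3}$---is not actually needed here.
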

 
\begin{proof}
Let $q=x^2 +y^2$ for integers $x$ and $y$, such that $x \equiv 1 \imod{4}$ and $y$ is even, and $p \nmid x$ when $p \equiv 1 \imod{4}$. Then \cite[Props 1 \& 2]{KR} tells us that, when $p \equiv 1 \imod{4}$, $x$ is uniquely determined, and $y$ up to sign; when $p \equiv 3 \imod{4}$ that $x=(-p)^{\frac{r}{2}}, y=0$ is the only solution; and
$J(\chi_4,\chi_4) = -x \pm iy$. The results follow as $J(\bar{\chi_4},\bar{\chi_4})$ is the complex conjugate of $J(\chi_4,\chi_4)$. 
 \end{proof}

\begin{lemma}\label{lem_JacOrder8}
Let $q=p^r\equiv 1 \pmod {8}$ for a prime $p$. 
Write $q=x^2 +y^2$ for integers $x$ and $y$, such that $x \equiv 1 \pmod{4}$, and $p \nmid x$ when $p \equiv 1 \imod{4}$.
Write $q=u^2+2v^2$ for integers $u$ and $v$, such that $u \equiv 3 \pmod{4}$, and $p \nmid u$ when $p \equiv 1,3 \imod{8}$.
Then
\begin{enumerate}
\item $J(\chi_8,\chi_8) = J(\chi_8^3,\chi_8^3) = \chi_8(-4) J(\chi_8,\chi_8^3)$;
\item $Re(J(\chi_8,\chi_8))= \chi_8(4) \, u$;
\item $J(\chi_8,\chi_8^2) = \chi_8(-4) J(\chi_4,\chi_4)$; and
\item $Re(J(\chi_8,\chi_8^2))= - \chi_8(-4) \, x$.
\end{enumerate}
\end{lemma}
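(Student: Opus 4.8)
The plan is to prove Lemma \ref{lem_JacOrder8} by working with Gauss sums and the Hasse--Davenport relation, which is the standard engine for relating Jacobi sums of characters of different orders. Writing $g(\chi):=\sum_{a \in \mathbb{F}_q} \chi(a) \zeta_p^{\operatorname{Tr}(a)}$ for the Gauss sum, I would use the identity $J(A,B) = g(A)g(B)/g(AB)$ valid whenever $AB$ is nontrivial, together with $g(\chi)g(\bar\chi) = \chi(-1) q$. Throughout, $\chi_8$ is a fixed character of order $8$, so $\chi_8^2 = \chi_4$ and $\chi_8^4 = \varphi$ (after reconciling the choice of $\chi_4$; one should note that the stated relations are invariant under the consistent choice of generators). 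These are purely local identities in the group ring of characters, so the four claims split naturally into (1)--(2), which concern the octic Jacobi sum $J(\chi_8,\chi_8)$, and (3)--(4), which reduce an octic/quartic Jacobi sum to the pure quartic sum treated in Lemma \ref{lem_JacOrder4}.

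For part (1), I would first establish $J(\chi_8,\chi_8) = J(\chi_8^3,\chi_8^3)$ by applying the Jacobi-sum transfer Proposition \ref{prop_JacXfer} (or directly via Galois conjugation: the automorphism of $\mathbb{Q}(\zeta_8)$ sending $\chi_8 \mapsto \chi_8^3$ fixes the rational integer in question, which forces the real parts to agree; more carefully one uses the explicit action on Gauss sums). The equality with $\chi_8(-4)\, J(\chi_8,\chi_8^3)$ is where the Hasse--Davenport product relation enters: applying it to the character $\chi_8$ and the quadratic character $\varphi$ gives a relation of the shape $g(\chi_8) g(\chi_8 \varphi) = \chi_8(4)^{-1} g(\chi_8^2) g(\varphi)$ (the duplication formula), and translating this into Jacobi sums, while keeping track of the factor $g(\varphi)^2 = \varphi(-1) q = q$, produces the factor $\chi_8(-4)$. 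Part (3) is the same mechanism applied one level down, relating $J(\chi_8,\chi_8^2)$ to $J(\chi_4,\chi_4)$ via the Hasse--Davenport relation for $\chi_4$ and $\varphi$, so I would prove (1) and (3) in parallel.

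For parts (2) and (4), which isolate the real parts, I would take real parts of the identities just derived. For (4), since $J(\chi_8,\chi_8^2) = \chi_8(-4) J(\chi_4,\chi_4)$ and $\operatorname{Re}(J(\chi_4,\chi_4)) = -x$ by Lemma \ref{lem_JacOrder4}(1) (indeed $J(\chi_4,\chi_4)+J(\bar{\chi_4},\bar{\chi_4}) = -2x$), the claim $\operatorname{Re}(J(\chi_8,\chi_8^2)) = -\chi_8(-4)\, x$ follows provided one checks that $\chi_8(-4) = \pm 1$ so that it commutes with taking real parts; here $\chi_8(-4) = \chi_8(-1)\chi_8(4) = \varphi(4)\cdot\varphi(-1)^{?}$ reduces to a fourth root of unity that is in fact real because $-4$ is (up to squares) controlled by $q \equiv 1 \imod 8$. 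For (2), the real part $\operatorname{Re}(J(\chi_8,\chi_8))$ is a well-known quantity: using $|J(\chi_8,\chi_8)|^2 = q$ together with the congruence and sign normalizations, one identifies $J(\chi_8,\chi_8)$ with an element of $\mathbb{Z}[\zeta_8]$ of norm $q$ whose real part encodes the representation $q = u^2 + 2v^2$; the normalization $u \equiv 3 \pmod 4$ and $p \nmid u$ pins down $u$ uniquely, and I would cite the classical evaluation of octic Jacobi sums (as in \cite{BEW}) to get $\operatorname{Re}(J(\chi_8,\chi_8)) = \chi_8(4)\, u$.

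The main obstacle will be bookkeeping the roots of unity and sign normalizations: the factors $\chi_8(4)$, $\chi_8(-4)$, and the choices of $x$ with $x \equiv 1 \pmod 4$ and $u$ with $u \equiv 3 \pmod 4$ must be reconciled so that the real-part formulas carry the correct sign, and one must verify these prefactors are genuinely real before pulling them through $\operatorname{Re}(\cdot)$. The congruence conditions defining $u$ and $x$ are exactly what remove the ambiguity, so the delicate step is matching the normalization in the cited evaluation of octic Gauss and Jacobi sums (the $p \equiv 1,3 \imod 8$ versus $p \equiv 5,7 \imod 8$ cases for the existence of $u$) to the sign conventions fixed in Lemma \ref{lem_JacOrder4}. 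Once the normalizations are aligned, the Hasse--Davenport relation and Lemma \ref{lem_JacOrder4} do all the substantive work.
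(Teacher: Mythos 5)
Your overall architecture is essentially the paper's: parts (1) and (3) are obtained in the paper by citing \cite[Thms 2.1.6 \& 3.3.3]{BEW}, and your Hasse--Davenport derivations are precisely the content behind those citations (they do go through: the prefactors work out because $\chi_8(4)^2=\chi_4(4)=\varphi(2)=1$ and $\chi_4(-1)=1$ when $q\equiv 1\pmod 8$, so $\chi_8(4)$ and $\chi_8(-4)$ are genuinely $\pm 1$, which also disposes of your worry in part (4)); part (4) is handled in the paper exactly as you propose, by combining (3) with Lemma \ref{lem_JacOrder4}. One local repair to (1): neither Proposition \ref{prop_JacXfer} alone nor ``Galois conjugation fixes the rational integer'' gives $J(\chi_8,\chi_8)=J(\chi_8^3,\chi_8^3)$ --- this Jacobi sum is not rational, and the automorphism $\sigma_3$ of $\mathbb{Q}(\zeta_8)$ sends $J(\chi_8,\chi_8)$ to $J(\chi_8^3,\chi_8^3)$, so the claimed equality is exactly the nontrivial assertion that $J(\chi_8,\chi_8)$ lies in the fixed field $\mathbb{Q}(\sqrt{-2})$ of $\sigma_3$. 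It does follow from your own mechanism, though: apply the duplication identity together with Proposition \ref{prop_JacXfer} to $\chi_8$ and to $\chi_8^3$ separately, and use $\chi_8(-4)^2=1$ to see that both sums equal $\chi_8(-4)\,J(\chi_8,\chi_8^3)$.

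The genuine gap is part (2). The lemma is stated for prime powers $q=p^r$, but the ``classical evaluation of octic Jacobi sums'' you propose to cite (\cite[Thm 3.3.1]{BEW}) is proved only over prime fields $\mathbb{F}_p$; there is no off-the-shelf statement covering $q=p^r$, and this is exactly where the paper does its real work. The paper first observes that the opening arguments of the proof of \cite[Thm 3.3.1]{BEW} carry over to $\mathbb{F}_q$, yielding $J(\chi_8,\chi_8)=\chi_8(4)(u+v\sqrt{-2})$ with $q=u^2+2v^2$, $u\equiv 3\pmod 4$ and $\chi_8(4)=\pm 1$, and then runs a Katre--Rajwade-style ideal argument in $\mathbb{Z}[\sqrt{-2}]$ (modeled on \cite[Props 1 \& 2]{KR}) to make the normalization of $u$ well defined: when $p\equiv 5,7\pmod 8$, $p$ is inert, so the only representation is $u=\pm p^{r/2}$, $v=0$; when $p\equiv 1,3\pmod 8$, $p$ splits, and the side condition $p\nmid u$ determines $u$ uniquely and $v$ up to sign. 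Without this split/inert analysis the right-hand side $\chi_8(4)\,u$ in (2) is not even well defined for prime powers (note the degenerate representation $v=0$ occurs precisely when $p\equiv 5,7\pmod 8$), so the step you correctly flag as ``matching the normalization in the cited evaluation'' is not a citation but an argument you still owe.
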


\begin{proof}
(1) This follows from \cite[Thm 2.1.6]{BEW}.
(2) The opening arguments of the proof of \cite[Thm 3.3.1]{BEW} also apply over $\mathbb{F}_q$, so $J(\chi_8,\chi_8) = \chi_8(4) (u + v \sqrt{-2})$ for integers $u$ and $v$, such that $q=u^2+2v^2$ and $u \equiv 3 \imod{4}$, and $\chi_8(4)=\pm1$. Similar arguments to those in the proof of \cite[Props 1 \& 2]{KR} can then be applied to tell us that when $p$ is inert in $\mathbb{Z}[\sqrt{-2}]$, i.e. when $p \equiv 5,7 \imod{8}$, $u=\pm p^{\frac{r}{2}}, v=0$ is the only solution; and when $p$ splits in $\mathbb{Z}[\sqrt{-2}]$, i.e. when $p \equiv 1,3 \imod{8}$, $p \nmid u$ and this uniquely determines $u$, and $v$ up to sign.
(3) This is proved in \cite[Thm 3.3.3]{BEW}.
(4) This follows from (3) and Lemma \ref{lem_JacOrder4}.
\end{proof}


\subsection{Properties of Finite Field Hypergeometric Functions}\label{sec_Prelim_HypFns}
As we have seen in Section \ref{sec_Results}, our most general results are expressed in terms of the finite field hypergeometric functions of Greene \cite{G, G2}.
These functions have very nice expressions as character sums \cite[Def 3.5 (after change of variable), Cor 3.14]{G2}.
For characters $A, B, C, D, E$ of $\mathbb{F}_{q}^*$,
\begin{equation}\label{for_CharSum2F1}
q \, {_{2}F_1} {\left( \begin{array}{cc} A, & B \\
\phantom{A} & C \end{array}
\Big| \; \lambda \right)}_{q}
= \sum_{b \in \mathbb{F}_q} A\bar{C}(b) \bar{B}C(1-b)  \bar{A} (b- \lambda)
\end{equation}
and
\begin{equation}\label{for_CharSum3F2}
q^2 \, {_{3}F_2} {\left( \begin{array}{ccc} A, & B, &  C \\
\phantom{A} & D, & E \end{array}
\Big| \; \lambda \right)}_{q}
=  \sum_{a,b \in \mathbb{F}_q} A\bar{E}(a) \bar{C}E(1-a) B(b) \bar{B}D(b-1) \bar{A} (a- \lambda b)
\end{equation}
Much of our work in this paper relies on being able to simplify expressions involving these functions. We will use the following reduction formulae \cite[Thms 3.15 \& 4.35]{G2} in our proof of Theorem \ref{thm_Main2}. For characters $A, B, C, D, E$ of $\mathbb{F}_{q}^*$,
\begin{align}
\label{for_3F2Red_1}
{_{3}F_2} {\left( \begin{array}{ccc} \varepsilon, & B, &  C \\
\phantom{A} & D, & E \end{array}
\Big| \; 1 \right)}_{q}
=&
-\frac{1}{q} \,
{_{2}F_1} {\left( \begin{array}{cc} B\bar{D}, &  C\bar{D} \\
\phantom{B\bar{D}} & E\bar{D} \end{array}
\Big| \; 1 \right)}_{q}
+ \binom{B}{D} \binom{C}{E};
\\[6pt] \label{for_3F2Red_2}
{_{3}F_2} {\left( \begin{array}{ccc} A, & \varepsilon, &  C \\
\phantom{A} & D & E \end{array}
\Big| \; 1 \right)}_{q}
=&
A(-1) \binom{D}{A}
{_{2}F_1} {\left( \begin{array}{cc} A\bar{D}, &  C\bar{D} \\
\phantom{A\bar{D}} & E\bar{D} \end{array}
\Big| \; 1 \right)}_{q}
-\frac{D(-1)}{q} \binom{C}{E};
\\[6pt] \label{for_3F2Red_3}
{_{3}F_2} {\left( \begin{array}{ccc} A, & B, &  C \\
\phantom{A} & A, & E \end{array}
\Big| \; 1 \right)}_{q}
=&
\binom{B}{A}
{_{2}F_1} {\left( \begin{array}{cc} B, &  C\\
\phantom{B} & E \end{array}
\Big| \; 1 \right)}_{q}
-\frac{\bar{A}(-1)}{q} \binom{C\bar{A}}{E\bar{A}};
\\[6pt] \label{for_3F2Red_4}
{_{3}F_2} {\left( \begin{array}{ccc} A, & B, &  C \\
\phantom{A} & B, & E \end{array}
\Big| \; 1 \right)}_{q}
=&
-\frac{1}{q} \,
{_{2}F_1} {\left( \begin{array}{cc} A, &  C \\
\phantom{A} & E \end{array}
\Big| \; 1 \right)}_{q}
+ \binom{A\bar{B}}{\bar{B}} \binom{C\bar{B}}{E\bar{B}};
\\[6pt] \label{for_3F2Red_5}
{_{3}F_2} {\left( \begin{array}{ccc} A, & B, &  C \\
\phantom{A} & D, & B \end{array}
\Big| \; 1 \right)}_{q}
=&
\binom{C\bar{D}}{B\bar{D}}
{_{2}F_1} {\left( \begin{array}{cc} A, &  C\\
\phantom{A} & D \end{array}
\Big| \; 1 \right)}_{q}
-\frac{BD(-1)}{q} \binom{A\bar{B}}{\bar{B}}; \, \textup{and}
\\[6pt] \label{for_3F2Red_6}
{_{3}F_2} {\left( \begin{array}{ccc} A, & B, &  C \\
\phantom{A} & D, & ABC\bar{D} \end{array}
\Big| \; 1 \right)}_{q}
&=
BC(-1)\binom{C}{D\bar{A}}\binom{B}{D\bar{C}}
-\frac{BD(-1)}{q} \binom{D\bar{B}}{A}.
\end{align}
It is easy to see from their definition that the value of finite field hypergeometric functions is invariant under permuting columns of parameters, i.e,
\begin{equation}\label{for_3F2Per}
{_{3}F_2} {\left( \begin{array}{ccc} A, & B, &  C \\
\phantom{A} & D, & E \end{array}
\Big| \; 1 \right)}_{q}
=
{_{3}F_2} {\left( \begin{array}{ccc} A, & C, &  B \\
\phantom{A} & E, & D \end{array}
\Big| \; 1 \right)}_{q},
\end{equation}
so the reduction formulae (\ref{for_3F2Red_2})-(\ref{for_3F2Red_5}) can each be applied to two different parameter structures. 
The ${_{2}F_1}( \cdot |1)$'s appearing in (\ref{for_3F2Red_1})-(\ref{for_3F2Red_5}) can also be reduced using \cite[Theorem 4.9]{G2}
\begin{equation}\label{for_2F1Red}
{_{2}F_1} {\left( \begin{array}{cc} A, &  B \\
\phantom{A} & C \end{array}
\Big| \; 1 \right)}_{q}
= A(-1) \binom{B}{\bar{A}C}.
\end{equation}

Another important feature of finite field hypergeometric functions is transformation formulae relating the values of functions with different parameters (analogous to those for classical hypergeometric series).  Of interest to us are the following ${_{3}F_2}( \cdot |1)$ transformations which can be 
found in \cite{G2}
but are stated more succinctly in \cite[Thms 5.14, 5.18 \& 5.20]{G}.
\begin{align}
\label{for_3F2_T1}
{_{3}F_2} {\left( \begin{array}{ccc} A, & B, &  C \\
\phantom{A} & D, & E \end{array}
\Big| \; 1 \right)}_{q}
&=
{_{3}F_2} {\left( \begin{array}{ccc} B\bar{D}, & A\bar{D}, &  C\bar{D} \\
\phantom{A} & \bar{D}, & E\bar{D} \end{array}
\Big| \; 1 \right)}_{q};
\\[6pt] \label{for_3F2_T2}
{_{3}F_2} {\left( \begin{array}{ccc} A, & B, &  C \\
\phantom{A} & D, & E \end{array}
\Big| \; 1 \right)}_{q}
&=
ABCDE(-1) \,
{_{3}F_2} {\left( \begin{array}{ccc} A, & A\bar{D}, &  A\bar{E} \\
\phantom{A} & A\bar{B}, & A\bar{C} \end{array}
\Big| \; 1 \right)}_{q};
\\[6pt] \label{for_3F2_T3}
{_{3}F_2} {\left( \begin{array}{ccc} A, & B, &  C \\
\phantom{A} & D, & E \end{array}
\Big| \; 1 \right)}_{q}
&=
ABCDE(-1) \,
{_{3}F_2} {\left( \begin{array}{ccc} B\bar{D}, & B, &  B\bar{E} \\
\phantom{A} & B\bar{A}, & B\bar{C} \end{array}
\Big| \; 1 \right)}_{q};
\\[6pt] \label{for_3F2_T4}
{_{3}F_2} {\left( \begin{array}{ccc} A, & B, &  C \\
\phantom{A} & D, & E \end{array}
\Big| \; 1 \right)}_{q}
&=
AE(-1) \,
{_{3}F_2} {\left( \begin{array}{ccc} A, & B, &  \bar{C}E \\
\phantom{A} & AB\bar{D}, & E \end{array}
\Big| \; 1 \right)}_{q};
\\[6pt] \label{for_3F2_T5}
{_{3}F_2} {\left( \begin{array}{ccc} A, & B, &  C \\
\phantom{A} & D, & E \end{array}
\Big| \; 1 \right)}_{q}
&=
AD(-1) \,
{_{3}F_2} {\left( \begin{array}{ccc} A, & D\bar{B}, &  C \\
\phantom{A} & D, & AC\bar{E} \end{array}
\Big| \; 1 \right)}_{q};
\\[6pt] \label{for_3F2_T6}
{_{3}F_2} {\left( \begin{array}{ccc} A, & B, &  C \\
\phantom{A} & D, & E \end{array}
\Big| \; 1 \right)}_{q}
&=
B(-1) \,
{_{3}F_2} {\left( \begin{array}{ccc} \bar{A}D, & B, &  C \\
\phantom{A} & D, & BC\bar{E} \end{array}
\Big| \; 1 \right)}_{q}; \, \textup{and}
\\[6pt] \label{for_3F2_T7}
{_{3}F_2} {\left( \begin{array}{ccc} A, & B, &  C \\
\phantom{A} & D, & E \end{array}
\Big| \; 1 \right)}_{q}
&=
AB(-1) \,
{_{3}F_2} {\left( \begin{array}{ccc} \bar{A}D, & \bar{B}D, &  C \\
\phantom{A} & D, & \bar{AB}DE \end{array}
\Big| \; 1 \right)}_{q}.
\end{align}


\section{A pair of subgraphs of $G_k(q)$ and Proofs of Theorems \ref{thm_Main1} and \ref{thm_Main3}}\label{sec_Subgraphs}
In this section we define two subgraphs of the generalized Paley graph $G_k(q)$ and relate the number of complete subgraphs of a given order for each graph, which we use to prove Theorems \ref{thm_Main1} and \ref{thm_Main3}. 
For a graph $G$ we denote its vertex set by $V(G)$ and its edge set by $E(G)$, so the order of $G$ is $\#V(G)$ and the size of $G$ is $\#E(G)$. For a given vertex $a$ of $G$ we denote the degree of $a$ in $G$ by $\deg_G(a)$. It is easy to see from its definition that $\#V(G_k(q)) =q$, $\deg_{G_k(q)}(a) = \frac{q-1}{k}$ for all vertices $a$, and, consequently, $\#E(G_k(q)) = \frac{q(q-1)}{2k}$.

Let $H_k(q)$ be the induced subgraph of $G_k(q)$ whose vertex set is $S_k$, the set of $k$-th power residues of $\mathbb{F}_q$. Therefore, $\#V(H_k(q))=|S_k| = \frac{q-1}{k}$. 
Now 
$$ab \in E(H_k(q)) \Longleftrightarrow \chi_k(a)=\chi_k(b)=\chi_k(a-b)=1.$$
So, for $a \in V(H_k(q))$, using (\ref{for_OrthRel}), we get that
\begin{align*}
\deg_{H_k(q)}(a) 
&= \frac{1}{k^2} \sum_{b \in \mathbb{F}_q^* \setminus \{a\}} \sum_{s=0}^{k-1} \chi_k^s(b) \sum_{t=0}^{k-1} \chi_k^t(a-b)\\
&= \frac{1}{k^2} \sum_{s,t=0}^{k-1} \chi_k^{s+t}(a) J(\chi_k^s, \chi_k^t)\\
&= \frac{1}{k^2} \, \mathbb{J}_0(q, k)\\
&= \frac{1}{k^2} ( \mathbb{R}_k(q) + q -3k+1) \qquad \textup{(by (\ref{for_J0R}))}.
\end{align*}
This is independent of $a$ so 
\begin{align*}
\#E(H_k(q))
&= \frac{1}{2} \cdot  \#V(H_k(q)) \cdot \deg_{H_k(q)}(a)\\
&= \frac{q-1}{2k^3} \, \mathbb{J}_0(q, k)\\
&= \frac{q-1}{2k^3} \, ( \mathbb{R}_k(q) + q -3k+1) .
\end{align*}

Let $H^{1}_k(q)$ be the induced subgraph of $H_k(q)$ whose vertex set is the set of neighbors of 1 in $H_k(q)$. Therefore 
$$\#V(H^{1}_k(q)) = \deg_{H_k(q)}(1) =  \frac{1}{k^2} \, \mathbb{J}_0(q, k) = \frac{1}{k^2} ( \mathbb{R}_k(q) + q -3k+1),$$
and 
$$ab \in E(H^{1}_k(q)) \Longleftrightarrow \chi_k(a)=\chi_k(b)=\chi_k(1-a)=\chi_k(1-b)=\chi_k(a-b)=1.$$
Again using (\ref{for_OrthRel}), and noting that $\chi_k(-1)=1$, we get that for $a \in V(H^{1}_k(q))$,
\begin{align*}
\deg_{H^{1}_k(q)}(a) 
&= \frac{1}{k^3} \sum_{b \in \mathbb{F}_q^* \setminus \{1,a\}} \sum_{t_1=0}^{k-1} \chi_k^{t_1}(b) \sum_{t_2=0}^{k-1} \chi_k^{t_2}(1-b) \sum_{t_3=0}^{k-1} \chi_k^{t_3}(a-b)\\
&= \frac{1}{k^3} \sum_{t_1,t_2,t_3=0}^{k-1}   \sum_{b \in \mathbb{F}_q^* \setminus \{1,a\}}  \chi_k^{t_1-t_3}(b) \,  \chi_k^{t_3-t_2}(1-b) \, \chi_k^{-t_1}(a-b)\\
&= \frac{1}{k^3} \sum_{t_1,t_2,t_3=0}^{k-1} 
q \, {_{2}F_1} {\left( \begin{array}{ccc} \chi_k^{t_1}, & \chi_k^{t_2} \\[0.05in]
\phantom{\chi_k^{t_1}} & \chi_k^{t_3}  \end{array}
\Big| \; a \right)}_{q}, \qquad \textup{(using (\ref{for_CharSum2F1})),}
\end{align*}
where we have used a change of variables to get the second line. Finally, we get that
\begin{align*}
\#E(H^{1}_k(q))
&= \frac{1}{2} \sum_{a \in V(H^{1}_k(q))} \deg_{H^{1}_k(q)}(a) \\
&=  \frac{1}{2k^3} \sum_{\substack{a \in \mathbb{F}_q \\ \chi_k(a)=\chi_k(1-a)=1}}  \sum_{t_1,t_2,t_3=0}^{k-1}   \sum_{b \in \mathbb{F}_q^* \setminus \{1,a\}}  \chi_k^{t_1}(b) \,  \chi_k^{t_2}(1-b) \, \chi_k^{t_3}(a-b)\\
&=  \frac{1}{2k^5}  \sum_{t_1,t_2,t_3,t_4,t_5=0}^{k-1}  \sum_{\substack{a,b \in \mathbb{F}_q^* \setminus \{1\} \\ a\neq b}}
\chi_k^{t_1}(b) \,  \chi_k^{t_2}(1-b) \, \chi_k^{t_3}(a-b) \,  \chi_k^{t_4}(a) \,  \chi_k^{t_5}(1-a)\\
&=  \frac{1}{2k^5}  \sum_{t_1,t_2,t_3,t_4,t_5=0}^{k-1}  
\sum_{a,b \in \mathbb{F}_q}
\chi_k^{t_1-t_5}(a) \,  \chi_k^{t_5-t_3}(1-a) \, \chi_k^{t_2}(b) \,  \chi_k^{t_4-t_2}(1-b) \,  \chi_k^{-t_1}(a-b)\\
&=  \frac{1}{2k^5}  \sum_{t_1,t_2,t_3,t_4,t_5=0}^{k-1} q^2
{_{3}F_2}\biggl( \begin{array}{ccc} \chi_k^{t_1}, & \chi_k^{t_2}, & \chi_k^{t_3} \vspace{.05in}\\
\phantom{\chi_k^{t_1}} & \chi_k^{t_4}, & \chi_k^{t_5} \end{array}
\Big| \; 1 \biggr)_{q}, \qquad \textup{(using (\ref{for_CharSum3F2})).}
\end{align*}
So we have proved the following proposition.
\begin{prop}\label{prop_Subgraph}
Let $k \geq 2$ be an integer. Let $q$ be a prime power such that $q \equiv 1 \imod {k}$ if $q$ is even, or, $q \equiv 1 \imod {2k}$ if $q$ is odd. Let $H_k(q)$ be the induced subgraph of the generalized Paley graph $G_k(q)$ whose vertex set is the set of $k$-th power residues of $\mathbb{F}_q$. Let $H^{1}_k(q)$ be the induced subgraph of $H_k(q)$ whose vertex set is the set of neighbors of 1 in $H_k(q)$. Then
\begin{enumerate}[\bf (a) ]
\item $\#V(H_k(q)) = \frac{q-1}{k};$\\[-6pt]
\item For $a \in V(H_k(q))$, $\deg_{H_k(q)}(a) = \frac{1}{k^2} \, \mathbb{J}_0(q, k) = \frac{1}{k^2} ( \mathbb{R}_k(q) + q -3k+1)$;\\
\item $\#E(H_k(q)) = \frac{q-1}{2k^3} \, \mathbb{J}_0(q, k) = \frac{q-1}{2k^3} \, ( \mathbb{R}_k(q) + q -3k+1)$;\\
\item $\#V(H^{1}_k(q)) = \frac{1}{k^2} \, \mathbb{J}_0(q, k) = \frac{1}{k^2} ( \mathbb{R}_k(q) + q -3k+1)$;\\
\item For $a \in V(H^{1}_k(q))$, $\deg_{H^{1}_k(q)}(a) = \displaystyle \frac{1}{k^3} \sum_{t_1,t_2,t_3=0}^{k-1} 
q \, {_{2}F_1} {\left( \begin{array}{ccc} \chi_k^{t_1}, & \chi_k^{t_2} \\[0.05in]
\phantom{\chi_k^{t_1}} & \chi_k^{t_3}  \end{array}
\Big| \; a \right)}_{q}$; and\\
\item $\#E(H^{1}_k(q)) = 
\displaystyle  \frac{1}{2k^5} \sum_{t_1,t_2,t_3,t_4,t_5=0}^{k-1} q^2
{_{3}F_2}\biggl( \begin{array}{ccc} \chi_k^{t_1}, & \chi_k^{t_2}, & \chi_k^{t_3} \vspace{.05in}\\
\phantom{\chi_k^{t_1}} & \chi_k^{t_4}, & \chi_k^{t_5} \end{array}
\Big| \; 1 \biggr)_{q}$.
\end{enumerate}
\end{prop}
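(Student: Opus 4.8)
The plan is to establish the six items in the order listed, since each builds on the earlier ones, and to use the orthogonality relation (\ref{for_OrthRel}) as the engine that converts the defining incidence conditions into character sums which then collapse into Jacobi sums and hypergeometric functions.

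For (a), the vertex set of $H_k(q)$ is $S_k$, a subgroup of $\mathbb{F}_q^*$ of index $k$, so $\#V = (q-1)/k$ is immediate. For (b), I would first record that $ab$ is an edge of $H_k(q)$ precisely when $a$, $b$, and $a-b$ are all $k$-th power residues, i.e. $\chi_k(a)=\chi_k(b)=\chi_k(a-b)=1$. Fixing a vertex $a$ (so $\chi_k(a)=1$), I would detect the two conditions $\chi_k(b)=1$ and $\chi_k(a-b)=1$ by applying (\ref{for_OrthRel}) twice, writing $\deg_{H_k(q)}(a)=\frac{1}{k^2}\sum_{b}\sum_{s}\chi_k^s(b)\sum_t\chi_k^t(a-b)$. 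After the change of variable $b=ac$ in the inner sum, the $b$-sum becomes $\chi_k^{s+t}(a)\,J(\chi_k^s,\chi_k^t)$; since $a$ is a $k$-th power residue, $\chi_k^{s+t}(a)=1$, leaving $\frac{1}{k^2}\mathbb{J}_0(q,k)$, and (\ref{for_J0R}) rewrites this in terms of $\mathbb{R}_k(q)$. Item (c) is then just $\#E=\tfrac12\#V\cdot\deg$ by regularity, and (d) follows because $\#V(H^{1}_k(q))$ is by definition $\deg_{H_k(q)}(1)$, with $1\in S_k$ so that (b) applies verbatim.

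The substantive work is in (e) and (f). For (e), an edge of $H^{1}_k(q)$ requires the five conditions $\chi_k(a)=\chi_k(b)=\chi_k(1-a)=\chi_k(1-b)=\chi_k(a-b)=1$; fixing a vertex $a$ (which already satisfies $\chi_k(a)=\chi_k(1-a)=1$), I would detect the three remaining conditions on $b$ with three applications of (\ref{for_OrthRel}), producing $\frac{1}{k^3}\sum_{t_1,t_2,t_3}\sum_b \chi_k^{t_1}(b)\chi_k^{t_2}(1-b)\chi_k^{t_3}(a-b)$. A shift of the exponents aligns this inner sum with the character-sum formula (\ref{for_CharSum2F1}) for $q\,{}_2F_1(\cdots|a)$, giving the claimed expression. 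For (f), I would sum the degrees over all vertices $a$, again expanding the two membership conditions $\chi_k(a)=\chi_k(1-a)=1$ via (\ref{for_OrthRel}) (introducing the new indices $t_4,t_5$), to land on $\frac{1}{2k^5}\sum_{t_1,\dots,t_5}\sum_{a,b}\chi_k^{t_1}(b)\chi_k^{t_2}(1-b)\chi_k^{t_3}(a-b)\chi_k^{t_4}(a)\chi_k^{t_5}(1-a)$, and then identify the double sum with (\ref{for_CharSum3F2}) for $q^2\,{}_3F_2(\cdots|1)$.

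The main obstacle, and the step deserving the most care, is the bookkeeping in (e) and (f): tracking the exclusions $b\notin\{1,a\}$ and $a\neq b$, choosing the changes of variable so the exponents match the exact parameter arrangement demanded by (\ref{for_CharSum2F1}) and (\ref{for_CharSum3F2}), and justifying that the summation domain may be extended to all of $\mathbb{F}_q$ (the boundary terms with $a$ or $b$ in $\{0,1\}$ contribute nothing since $\chi_k(0)=0$ annihilates them). Once each factor carries precisely the argument required by Greene's character-sum definitions, the two identities fall out directly.
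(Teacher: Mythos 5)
Your proposal is correct and follows essentially the same route as the paper: both use the orthogonality relation (\ref{for_OrthRel}) to detect the residue conditions, collapse the degree sum in (b) to $\mathbb{J}_0(q,k)$ via Jacobi sums, and re-index the character exponents so that the sums in (e) and (f) match Greene's character-sum formulas (\ref{for_CharSum2F1}) and (\ref{for_CharSum3F2}). The bookkeeping you flag (extending the summation domain using $\chi(0)=0$, including for $\varepsilon$) is exactly the point the paper handles implicitly in its displayed manipulations.
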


Next we relate the number of complete subgraphs of a certain order of $G_k(q)$ to those of $H_k(q)$ and $H_k^{1}(q)$.
\begin{lemma}\label{lem_Subgraph}
Let $k, q, G_k(q), H_k(q)$ and $H^{1}_k(q)$ be defined as in Proposition \ref{prop_Subgraph}.
Then, for $n$ a positive integer,
\begin{enumerate}[\bf (a) ]
\item $\mathcal{K}_{n+1}(G_k(q)) = \frac{q}{n+1} \mathcal{K}_{n}(H_k(q))$; and
\item $\mathcal{K}_{n+1}(H_k(q)) = \frac{q-1}{k(n+1)} \mathcal{K}_{n}(H^{1}_k(q))$.
\end{enumerate}
So for $n \geq 2$
\begin{enumerate}[\bf (c) ]
\item $\displaystyle \mathcal{K}_{n+1}(G_k(q)) = \frac{q(q-1)}{kn(n+1)} \mathcal{K}_{n-1}(H^{1}_k(q)).$
\end{enumerate}
\end{lemma}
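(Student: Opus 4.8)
The plan is to prove (a) and (b) by the same double-counting argument, exploiting that $G_k(q)$ and $H_k(q)$ are vertex-transitive, and then to deduce (c) by composing them. In both cases the mechanism is the standard clique/neighborhood correspondence: for a fixed vertex $v$, the complete subgraphs of order $n+1$ containing $v$ are in bijection with the complete subgraphs of order $n$ in the subgraph induced on the neighbors of $v$ (delete $v$ in one direction; re-adjoin $v$, which is adjacent to all of its neighbors, in the other).

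For part (a), I would count in two ways the pairs $(K,v)$ with $K$ a complete subgraph of order $n+1$ in $G_k(q)$ and $v\in V(K)$. Each such $K$ has exactly $n+1$ vertices, so the total is $(n+1)\,\mathcal{K}_{n+1}(G_k(q))$. The key observation is that $G_k(q)$ is a Cayley graph on the additive group of $\mathbb{F}_q$, so each translation $x\mapsto x-v$ is a graph automorphism carrying the neighborhood of $v$ onto the neighborhood of $0$. Since $-1\in S_k$, the neighbors of $0$ are precisely the $b$ with $-b\in S_k$, i.e.\ $b\in S_k$, and the induced subgraph on them is exactly $H_k(q)$. Thus the neighborhood of every vertex induces a graph isomorphic to $H_k(q)$, so the bijection above contributes $\mathcal{K}_n(H_k(q))$ for each $v$; summing over the $q$ vertices gives $(n+1)\,\mathcal{K}_{n+1}(G_k(q)) = q\,\mathcal{K}_n(H_k(q))$, which is (a).

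For part (b), I would run the identical argument inside $H_k(q)$, except that vertex-transitivity now comes from multiplication rather than translation. For $a\in S_k$ the map $x\mapsto a^{-1}x$ is an automorphism of $H_k(q)$: since $S_k$ is a multiplicative subgroup it preserves $V(H_k(q))=S_k$, and $a^{-1}x-a^{-1}y=a^{-1}(x-y)$ shows edges are preserved. This map sends $a$ to $1$, so the neighborhood of any $a$ induces a graph isomorphic to the neighborhood of $1$, which is $H^{1}_k(q)$ by definition. Counting the pairs $(K,v)$ with $K$ a complete subgraph of order $n+1$ in $H_k(q)$ then yields $(n+1)\,\mathcal{K}_{n+1}(H_k(q)) = \#V(H_k(q))\cdot\mathcal{K}_n(H^{1}_k(q)) = \tfrac{q-1}{k}\,\mathcal{K}_n(H^{1}_k(q))$, using Proposition~\ref{prop_Subgraph}(a), which rearranges to (b).

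Finally, (c) follows immediately for $n\geq 2$ by substituting (b), with $n$ replaced by $n-1$ (legitimate since then $n-1\geq 1$), into (a):
\[
\mathcal{K}_{n+1}(G_k(q)) = \frac{q}{n+1}\,\mathcal{K}_n(H_k(q)) = \frac{q}{n+1}\cdot\frac{q-1}{kn}\,\mathcal{K}_{n-1}(H^{1}_k(q)) = \frac{q(q-1)}{kn(n+1)}\,\mathcal{K}_{n-1}(H^{1}_k(q)).
\]
The only genuine content is the transitivity step: one must verify that the relevant translations and multiplications are honest graph automorphisms and correctly identify the neighborhood-induced subgraphs as $H_k(q)$ and $H^{1}_k(q)$ respectively. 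Everything else is the routine clique/neighborhood bijection and elementary bookkeeping, so I expect the transitivity verification to be the main (though minor) obstacle.
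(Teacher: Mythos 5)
Your proof is correct and follows essentially the same route as the paper's: a count of cliques through each vertex, combined with vertex-transitivity via translations $\lambda \mapsto \lambda + a$ for $G_k(q)$ and multiplications by elements of $S_k$ for $H_k(q)$, identifying the neighborhood-induced subgraphs with $H_k(q)$ and $H^1_k(q)$ respectively. The only cosmetic difference is that the paper verifies the clique/neighborhood bijection by writing out the character conditions $\chi_k(a_i)=\chi_k(a_i-a_j)=1$ explicitly, whereas you invoke the standard correspondence directly; the substance is identical.
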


\begin{proof} Let a complete subgraph of order $m$ be represented by the $m$-tuple of its vertices.
(a) Let $\mathcal{S}_{G,a}$ be the set of complete subgraphs of $G_k(q)$ of order $n+1$ containing the vertex $a$. Let $\mathcal{S}_{H}$ be the set of complete subgraphs of $H_k(q)$ of order $n$. Now
\begin{align*}
(0,a_1,a_2, \cdots,a_n) \in \mathcal{S}_{G,0} 
&\Longleftrightarrow  \chi_k(a_i)=\chi_k(a_i-a_j)=1 \textup{ for all } 1 \leq i < j \leq n\\
&\Longleftrightarrow (a_1,a_2, \cdots,a_n) \in \mathcal{S}_{H} 
\end{align*}
So $|\mathcal{S}_{G,0}| = |\mathcal{S}_{H}|$. For $a \in V(G_k(q))$, the map $f_a(\lambda)=\lambda+a$ is an automorphism of $G_k(q)$, so $|\mathcal{S}_{G,a}| = |\mathcal{S}_{G,0}| = |\mathcal{S}_{H}|$ for all $a \in V(G_k(q))$. Then
$$\mathcal{K}_{n+1}(G_k(q)) = \frac{1}{n+1} \sum_{a \in V(G_k(q))} |\mathcal{S}_{G,a}| = \frac{q}{n+1} |\mathcal{S}_{H}|,$$
as required.\\
(b) Let $\mathcal{S}_{H,a}$ be the set of complete subgraphs of $H_k(q)$ of order $n+1$ containing the vertex $a$. Let $\mathcal{S}_{H^{1}}$ be the set of complete subgraphs of $H^{1}_k(q)$ of order $n$. Now
\begin{align*}
(1,a_1,a_2, \cdots,a_n) \in \mathcal{S}_{H,1} 
&\Longleftrightarrow  \chi_k(a_i)=\chi_k(a_i-1)=\chi_k(a_i-a_j)=1 \textup{ for all } 1 \leq i < j \leq n\\
&\Longleftrightarrow (a_1,a_2, \cdots,a_n) \in \mathcal{S}_{H^{1}} 
\end{align*}
So $|\mathcal{S}_{H,1}| = |\mathcal{S}_{H^{1}}|$. For $a \in V(H_k(q))$, the map $f_a(\lambda)=a \lambda$ is an automorphism of $H_k(q)$, so $|\mathcal{S}_{H,a}| = |\mathcal{S}_{H,1}| = |\mathcal{S}_{H^{1}}|$ for all $a \in V(H_k(q))$. Then
$$\mathcal{K}_{n+1}(H_k(q)) = \frac{1}{n+1} \sum_{a \in V(H_k(q))} |\mathcal{S}_{H,a}| = \frac{\#V(H_k(q))}{n+1} |\mathcal{S}_{H^{1}}|,$$
as required.\\
(c) Follows immediately from combining (a) and (b).
\end{proof}
\noindent
Taking $n=3$ in Lemma \ref{lem_Subgraph} (c) yields Corollary \ref{cor_Subgraph}.
\begin{cor}\label{cor_Subgraph}
Let $k \geq 2$ be an integer. Let $q$ be a prime power such that $q \equiv 1 \imod {k}$ if $q$ is even, or, $q \equiv 1 \imod {2k}$ if $q$ is odd.  Then
$$\mathcal{K}_{4}(G_k(q)) = \frac{q(q-1)}{12k} \#E(H^{1}_k(q)).$$
\end{cor}
\noindent
Combining Corollary \ref{cor_Subgraph} and Proposition \ref{prop_Subgraph} (f) proves Theorem \ref{thm_Main1}.

Taking $n=2$ in Lemma \ref{lem_Subgraph} (a) yields Corollary \ref{cor_Subgraph2}.
\begin{cor}\label{cor_Subgraph2}
Let $k \geq 2$ be an integer. Let $q$ be a prime power such that $q \equiv 1 \imod {k}$ if $q$ is even, or, $q \equiv 1 \imod {2k}$ if $q$ is odd.  Then
$$\mathcal{K}_{3}(G_k(q)) = \frac{q}{3} \#E(H_k(q)).$$
\end{cor}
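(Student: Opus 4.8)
The plan is simply to specialize part (a) of Lemma~\ref{lem_Subgraph}, since the corollary is the case $n=2$ of that result. The one observation that bridges the two statements is that a complete subgraph of order two is nothing other than an edge: for any graph $G$, the count $\mathcal{K}_2(G)$ of order-two complete subgraphs is by definition the number of pairs of adjacent vertices, i.e.\ $\mathcal{K}_2(G) = \#E(G)$.

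Concretely, I would first invoke Lemma~\ref{lem_Subgraph}(a), which asserts that for every positive integer $n$,
\[
\mathcal{K}_{n+1}(G_k(q)) = \frac{q}{n+1}\,\mathcal{K}_{n}(H_k(q)).
\]
Taking $n=2$ yields
\[
\mathcal{K}_{3}(G_k(q)) = \frac{q}{3}\,\mathcal{K}_{2}(H_k(q)).
\]
I would then substitute the identification $\mathcal{K}_{2}(H_k(q)) = \#E(H_k(q))$ noted above to obtain exactly the claimed formula $\mathcal{K}_{3}(G_k(q)) = \frac{q}{3}\,\#E(H_k(q))$.

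There is no genuine obstacle remaining at this stage: all the substance has already been discharged in the proof of Lemma~\ref{lem_Subgraph}(a), where the key input is that the translations $f_a(\lambda)=\lambda+a$ act as automorphisms of $G_k(q)$, making the number of order-$(n+1)$ cliques through a fixed vertex independent of that vertex. The only point requiring a moment's care is to record the convention $\mathcal{K}_2 = \#E$ explicitly, so that the reader sees why the generic edge-counting formula $\#E(H_k(q))$ from Proposition~\ref{prop_Subgraph}(c) can be inserted directly in place of $\mathcal{K}_2(H_k(q))$.
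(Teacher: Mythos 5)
Your proposal is correct and matches the paper's argument exactly: the paper proves this corollary by taking $n=2$ in Lemma \ref{lem_Subgraph}(a), with the identification $\mathcal{K}_2(H_k(q)) = \#E(H_k(q))$ left implicit. Your only addition is to spell out that convention, which is a harmless (indeed helpful) clarification.
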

\noindent
Combining Corollary \ref{cor_Subgraph2} and Proposition \ref{prop_Subgraph} (c) proves Theorem \ref{thm_Main3}.


\section{Proof of Theorem \ref{thm_Main2}}\label{sec_ProofThm2}
We start with Theorem \ref{thm_Main1} and consider  
\begin{equation}\label{for_3F2_Total}
q^2 \sum_{\vec{t} \in \left({\mathbb{Z}_{k}}\right)^{5}} {_{3}F_2} \left( \vec{t} \; \big| \;  1 \right)_{q,k}.
\end{equation}
Many of the summands in (\ref{for_3F2_Total}) can be simplified using the reduction formulae (\ref{for_3F2Red_1})-(\ref{for_3F2Red_6}) described in Section \ref{sec_Prelim_HypFns}. There are ten distinct cases which we will deal with in turn.
\underline{Case 1 ($t_1=0$):} 
We start with the terms which have $t_1 = 0$. These terms can be reduced using (\ref{for_3F2Red_1}) as follows.
\begin{align*}
q^2 \sum_{t_2,t_3,t_4,t_5=0}^{k-1} &
{_{3}F_2}\biggl( \begin{array}{ccc} \varepsilon, & \chi_k^{t_2}, & \chi_k^{t_3} \vspace{.05in}\\
\phantom{\chi_k^{t_1}} & \chi_k^{t_4}, & \chi_k^{t_5} \end{array}
\Big| \; 1 \biggr)_{q}
\\[6pt]
&=
q^2 \sum_{t_2,t_3,t_4,t_5=0}^{k-1}
\left[
-\frac{1}{q} \,
{_{2}F_1} {\left( \begin{array}{cc} \chi_k^{t_2-t_4}, &  \chi_k^{t_3-t_4} \vspace{.05in}\\
\phantom{\chi_k^{t_2-t_4}} &\chi_k^{t_5-t_4}\end{array}
\Big| \; 1 \right)}_{q}
+ \binom{\chi_k^{t_2}}{\chi_k^{t_4}} \binom{\chi_k^{t_3}}{\chi_k^{t_5}} \right]
\\[6pt]
&=
\sum_{t_2,t_3,t_4,t_5=0}^{k-1}
\left[
-q \,
{_{2}F_1} {\left( \begin{array}{cc} \chi_k^{t_2-t_4}, &  \chi_k^{t_3-t_4} \vspace{.05in}\\
\phantom{\chi_k^{t_2-t_4}} &\chi_k^{t_5-t_4}\end{array}
\Big| \; 1 \right)}_{q}
+J(\chi_k^{t_2}, \bar{\chi_k}^{t_4}) \, J(\chi_k^{t_3}, \bar{\chi_k}^{t_5}) \right]
\\[6pt]
&=
-q \sum_{t_2,t_3,t_4,t_5=0}^{k-1}
{_{2}F_1} {\left( \begin{array}{cc} \chi_k^{t_2}, &  \chi_k^{t_3} \vspace{.05in}\\
\phantom{\chi_k^{t_2}} &\chi_k^{t_5}\end{array}
\Big| \; 1 \right)}_{q}
+ \mathbb{J}_0(q, k)^2
\\[6pt]
&=
-qk \sum_{t_2,t_3,t_5=0}^{k-1}
\binom{\chi_k^{t_3}}{\chi_k^{t_5-t_2}} + \mathbb{J}_0(q, k)^2 \qquad \textup{(using (\ref{for_2F1Red}))}
\\[6pt]
&=
-k \sum_{t_2,t_3,t_5=0}^{k-1}
J(\chi_k^{t_3},\chi_k^{t_2-t_5}) + \mathbb{J}_0(q, k)^2 
\\[6pt]
&=
-k \sum_{t_2,t_3,t_5=0}^{k-1}
J(\chi_k^{t_3},\chi_k^{t_2}) + \mathbb{J}_0(q, k)^2 
\\[6pt]
&=
\mathbb{J}_0(q, k)^2-k^2 \, \mathbb{J}_0(q, k), 
\end{align*}
where we have used the fact that $\chi_k(-1)=1$ in many of the steps.\\ 
\underline{Case 2 ($t_2=0$):} Next we reduce the terms which have $t_2 = 0$ using (\ref{for_3F2Red_2}), excluding the $t_1=0$ terms which have already been accounted for.
\begin{align}\notag
q^2 \sum_{\substack{t_1,t_3,t_4,t_5=0 \\ t_1 \neq 0}}^{k-1} &
{_{3}F_2}\biggl( \begin{array}{ccc} \chi_k^{t_1}, & \varepsilon, & \chi_k^{t_3} \vspace{.05in}\\
\phantom{\chi_k^{t_1}} & \chi_k^{t_4}, & \chi_k^{t_5} \end{array}
\Big| \; 1 \biggr)_{q}
\\[6pt] \notag
&=
q^2 \sum_{\substack{t_1,t_3,t_4,t_5=0 \\ t_1 \neq 0}}^{k-1} 
\left[
\binom{\chi_k^{t_4}}{\chi_k^{t_1}}
{_{2}F_1} {\left( \begin{array}{cc} \chi_k^{t_1-t_4}, &  \chi_k^{t_3-t_4} \vspace{.05in}\\
\phantom{\chi_k^{t_1-t_4}} & \chi_k^{t_5-t_4} \end{array}
\Big| \; 1 \right)}_{q}
-\frac{1}{q} \binom{\chi_k^{t_3}}{\chi_k^{t_5}}
\right]
\\[6pt] \notag
&=
\sum_{\substack{t_1,t_3,t_4,t_5=0 \\ t_1 \neq 0}}^{k-1} 
\left[
J(\chi_k^{t_4},\bar{\chi_k}^{t_1}) \, J(\chi_k^{t_3-t_4},\chi_k^{t_1-t_5})
- J(\chi_k^{t_3},\bar{\chi_k}^{t_5})
\right]  \qquad \textup{(using (\ref{for_2F1Red}))}
\\[6pt] \notag
&=
\sum_{\substack{t_1,t_4=0 \\ t_1 \neq 0}}^{k-1} 
J(\chi_k^{t_4},\bar{\chi_k}^{t_1}) \, 
\sum_{t_3,t_5=0}^{k-1} 
J(\chi_k^{t_3-t_4},\chi_k^{t_1-t_5})
- k(k-1) \, \mathbb{J}_0(q, k)
\\[6pt] \notag
&=
\sum_{\substack{t_1,t_4=0 \\ t_1 \neq 0}}^{k-1} 
J(\chi_k^{t_4},\bar{\chi_k}^{t_1}) \, 
\sum_{t_3,t_5=0}^{k-1} 
J(\chi_k^{t_3},\chi_k^{t_5})
- k(k-1) \, \mathbb{J}_0(q, k) 
\\[6pt] \notag
&=
\mathbb{J}_0(q, k)^2
- \mathbb{J}_0(q, k) \sum_{t=0}^{k-1} J(\chi_k^{t},\varepsilon) \, 
- k(k-1) \, \mathbb{J}_0(q, k) 
\\[6pt]
\label{for_RedCase2}
&=
\mathbb{J}_0(q, k)^2
- ((k-1)^2+q-2) \, \mathbb{J}_0(q, k) \qquad \textup{(using Prop \ref{prop_JacBasic})}.
\end{align}
\underline{Case 3 ($t_3=0$):}
Next we reduce the terms which have $t_3 = 0$ using (\ref{for_3F2Red_2}) and (\ref{for_3F2Per}), excluding the $t_1,t_2=0$ terms which have already been accounted for.
\begin{align}
\notag
q^2 & \sum_{\substack{t_1,t_2,t_4,t_5=0 \\ t_1, t_2 \neq 0}}^{k-1} 
{_{3}F_2}\biggl( \begin{array}{ccc} \chi_k^{t_1}, & \chi_k^{t_2}, & \varepsilon \vspace{.05in}\\
\phantom{\chi_k^{t_1}} & \chi_k^{t_4}, & \chi_k^{t_5} \end{array}
\Big| \; 1 \biggr)_{q}
\\[6pt] \notag
&=
q^2 \sum_{\substack{t_1,t_2,t_4,t_5=0 \\ t_1, t_2 \neq 0}}^{k-1} 
{_{3}F_2}\biggl( \begin{array}{ccc} \chi_k^{t_1}, & \varepsilon, & \chi_k^{t_2}  \vspace{.05in}\\
\phantom{\chi_k^{t_1}} & \chi_k^{t_5}, & \chi_k^{t_4} \end{array}
\Big| \; 1 \biggr)_{q}
\\[6pt] \notag
&=
q^2 \sum_{\substack{t_1,t_2,t_4,t_5=0 \\ t_1 \neq 0}}^{k-1} 
{_{3}F_2}\biggl( \begin{array}{ccc} \chi_k^{t_1}, & \varepsilon, & \chi_k^{t_2}  \vspace{.05in}\\
\phantom{\chi_k^{t_1}} & \chi_k^{t_5}, & \chi_k^{t_4} \end{array}
\Big| \; 1 \biggr)_{q}
-
q^2 \sum_{\substack{t_1,t_4,t_5=0 \\ t_1 \neq 0}}^{k-1} 
{_{3}F_2}\biggl( \begin{array}{ccc} \chi_k^{t_1}, & \varepsilon, & \varepsilon \vspace{.05in}\\
\phantom{\chi_k^{t_1}} & \chi_k^{t_5}, & \chi_k^{t_4} \end{array}
\Big| \; 1 \biggr)_{q}
\\[6pt]
&=
\label{for_RedCase3a}
\mathbb{J}_0(q, k)^2
- ((k-1)^2+q-2) \, \mathbb{J}_0(q, k)
-
q^2 \sum_{\substack{t_1,t_4,t_5=0 \\ t_1 \neq 0}}^{k-1} 
{_{3}F_2}\biggl( \begin{array}{ccc} \chi_k^{t_1}, & \varepsilon, & \varepsilon \vspace{.05in}\\
\phantom{\chi_k^{t_1}} & \chi_k^{t_5}, & \chi_k^{t_4} \end{array}
\Big| \; 1 \biggr)_{q}
\end{align}
using the previous case (\ref{for_RedCase2}). Now, using (\ref{for_3F2Red_2}),
\begin{align} \notag
q^2 & \sum_{\substack{t_1,t_4,t_5=0 \\ t_1 \neq 0}}^{k-1} 
{_{3}F_2}\biggl( \begin{array}{ccc} \chi_k^{t_1}, & \varepsilon, & \varepsilon \vspace{.05in}\\
\phantom{\chi_k^{t_1}} & \chi_k^{t_5}, & \chi_k^{t_4} \end{array}
\Big| \; 1 \biggr)_{q}
\\[6pt] \notag
&=
q^2 \sum_{\substack{t_1,t_4,t_5=0 \\ t_1 \neq 0}}^{k-1} 
\left[
\binom{\chi_k^{t_5}}{\chi_k^{t_1}}
{_{2}F_1} {\left( \begin{array}{cc} \chi_k^{t_1-t_5}, &  \bar{\chi_k}^{t_5} \\
\phantom{\chi_k^{t_1-t_5}} & \chi_k^{t_4-t_5} \end{array}
\Big| \; 1 \right)}_{q}
-\frac{1}{q} \binom{\varepsilon}{\chi_k^{t_4}}
\right]
\\[6pt] \notag
&=
\sum_{\substack{t_1,t_4,t_5=0 \\ t_1 \neq 0}}^{k-1} 
\left[
J(\chi_k^{t_5},\bar{\chi_k}^{t_1}) \, J(\bar{\chi_k}^{t_5},\chi_k^{t_1-t_4})
-J(\varepsilon, \bar{\chi_k}^{t_4})
\right]  \qquad \textup{(using (\ref{for_2F1Red}))}
\\[6pt] \notag
&=
\sum_{\substack{t_1,t_4,t_5=0 \\ t_1 \neq 0}}^{k-1} 
J(\chi_k^{t_5},\bar{\chi_k}^{t_1}) \, J(\bar{\chi_k}^{t_5},\chi_k^{t_4})
-k(k-1)(q-k-1) \qquad \textup{(using Prop \ref{prop_JacBasic})}
\\[6pt] \notag
&=
\mathbb{JJ}_0(q,k) - \sum_{t_4,t_5=0}^{k-1} J(\chi_k^{t_5},\varepsilon) \, J(\bar{\chi_k}^{t_5},\chi_k^{t_4})
-k(k-1)(q-k-1)
\\[6pt] \notag
&=
\mathbb{JJ}_0(q,k) 
+ \sum_{\substack{t_4,t_5=0 \\ t_5 \neq 0}}^{k-1}  J(\bar{\chi_k}^{t_5},\chi_k^{t_4})
-(q-2)(q-k-1)-k(k-1)(q-k-1)
\\[6pt]
\label{for_RedCase3b}
&=
\mathbb{JJ}_0(q,k) + \mathbb{J}_0(q,k) 
-(q-k-1)(q+k^2-k-1).
\end{align}
So, combining (\ref{for_RedCase3a}) and (\ref{for_RedCase3b}) we get
\begin{multline*}
q^2  \sum_{\substack{t_1,t_2,t_4,t_5=0 \\ t_1, t_2 \neq 0}}^{k-1} 
{_{3}F_2}\biggl( \begin{array}{ccc} \chi_k^{t_1}, & \chi_k^{t_2}, & \varepsilon \vspace{.05in}\\
\phantom{\chi_k^{t_1}} & \chi_k^{t_4}, & \chi_k^{t_5} \end{array}
\Big| \; 1 \biggr)_{q}\\
=
\mathbb{J}_0(q, k)^2
- ((k-1)^2+q-1) \, \mathbb{J}_0(q, k)
- \mathbb{JJ}_0(q,k) 
+(q-k-1)(q+k^2-k-1).
\end{multline*}
We proceed in this fashion until we have evaluated all terms in (\ref{for_3F2_Total}) that can be reduced using (\ref{for_3F2Red_1})-(\ref{for_3F2Red_6}). Like the first three cases, which we have described above, these evaluations are straightforward and use only basic properties of hypergeometric functions and Jacobi sums from Sections \ref{sec_Prelim_GJSums} and \ref{sec_Prelim_HypFns}. However, the evaluations do become more tedious as we proceed, as we have to exclude successively more cases which have already been accounted for, so we omit the details for reasons of brevity. The remaining cases summarize as follows.
\\[6pt] \underline{Case 4 ($t_1=t_4$):}
\begin{multline*}
q^2  \sum_{\substack{t_1,t_2,t_3,t_5=0 \\ t_1, t_2,t_3 \neq 0}}^{k-1} 
{_{3}F_2}\biggl( \begin{array}{ccc} \chi_k^{t_1}, & \chi_k^{t_2}, & \chi_k^{t_3} \vspace{.05in}\\
\phantom{\chi_k^{t_1}} & \chi_k^{t_1}, & \chi_k^{t_5} \end{array}
\Big| \; 1 \biggr)_{q}\\
=
\mathbb{J}_0(q, k)^2
- (2q+(k-1)^2-4k) \, \mathbb{J}_0(q, k) 
+(q-k-1)(q-3k+1).
\end{multline*}
\\ \underline{Case 5 ($t_1=t_5$):}
\begin{multline*}
q^2  \sum_{\substack{t_1,t_2,t_3,t_4=0 \\ t_1, t_2,t_3 \neq 0 \\ t_1 \neq t_4}}^{k-1} 
{_{3}F_2}\biggl( \begin{array}{ccc} \chi_k^{t_1}, & \chi_k^{t_2}, & \chi_k^{t_3} \vspace{.05in}\\
\phantom{\chi_k^{t_1}} & \chi_k^{t_4}, & \chi_k^{t_1} \end{array}
\Big| \; 1 \biggr)_{q}\\
=
\mathbb{J}_0(q, k)^2
- (2q+k^2-6k+4) \, \mathbb{J}_0(q, k) 
- \mathbb{JJ}_0(q,k) 
+2q^2+(k^2-8k+2)q-k^3+5k^2-2.
\end{multline*}
\\ \underline{Case 6 ($t_2=t_4$):}
\begin{multline*}
q^2  \sum_{\substack{t_1,t_2,t_3,t_5=0 \\ t_1, t_2,t_3 \neq 0 \\ t_1 \neq t_2, t_5}}^{k-1} 
{_{3}F_2}\biggl( \begin{array}{ccc} \chi_k^{t_1}, & \chi_k^{t_2}, & \chi_k^{t_3} \vspace{.05in}\\
\phantom{\chi_k^{t_1}} & \chi_k^{t_2}, & \chi_k^{t_5} \end{array}
\Big| \; 1 \biggr)_{q}\\
=
\mathbb{J}_0(q, k)^2
- (q+k^2-6k+7) \, \mathbb{J}_0(q, k) 
- 2\, \mathbb{JJ}_0(q,k) 
+2q^2+(2k^2-10k+4)q-2k^3+8k^2-4k-2.
\end{multline*}
\\ \underline{Case 7 ($t_3=t_5$):}
\begin{multline*}
q^2  \sum_{\substack{t_1,t_2,t_3,t_4=0 \\ t_1, t_2,t_3 \neq 0 \\ t_1 \neq t_3, t_4 \\ t_2 \neq t_4}}^{k-1} 
{_{3}F_2}\biggl( \begin{array}{ccc} \chi_k^{t_1}, & \chi_k^{t_2}, & \chi_k^{t_3} \vspace{.05in}\\
\phantom{\chi_k^{t_1}} & \chi_k^{t_4}, & \chi_k^{t_3} \end{array}
\Big| \; 1 \biggr)_{q}\\
=
\mathbb{J}_0(q, k)^2
- (2q+k^2-8k+10) \, \mathbb{J}_0(q, k) 
- 2\, \mathbb{JJ}_0(q,k) 
+3q^2+(2k^2-15k+8)q-2k^3+14k^2-15k+1.
\end{multline*}
\\ \underline{Case 8 ($t_2=t_5$):}
\begin{multline*}
q^2  \sum_{\substack{t_1,t_2,t_3,t_4=0 \\ t_1, t_2,t_3 \neq 0 \\ t_1 \neq t_2, t_4 \\ t_2 \neq t_3, t_4}}^{k-1} 
{_{3}F_2}\biggl( \begin{array}{ccc} \chi_k^{t_1}, & \chi_k^{t_2}, & \chi_k^{t_3} \vspace{.05in}\\
\phantom{\chi_k^{t_1}} & \chi_k^{t_4}, & \chi_k^{t_2} \end{array}
\Big| \; 1 \biggr)_{q}\\
=
\mathbb{J}_0(q, k)^2
- (2q+k^2-8k+12) \, \mathbb{J}_0(q, k) 
- 3\, \mathbb{JJ}_0(q,k) 
+4q^2+(3k^2-20k+10)q-3k^3+19k^2-20k+2.
\end{multline*}
\\ \underline{Case 9 ($t_3=t_4$):}
\begin{multline*}
q^2  \sum_{\substack{t_1,t_2,t_3,t_5=0 \\ t_1, t_2,t_3 \neq 0 \\ t_1, t_2 \neq t_3, t_5 \\ t_3 \neq t_5}}^{k-1} 
{_{3}F_2}\biggl( \begin{array}{ccc} \chi_k^{t_1}, & \chi_k^{t_2}, & \chi_k^{t_3} \vspace{.05in}\\
\phantom{\chi_k^{t_1}} & \chi_k^{t_3}, & \chi_k^{t_5} \end{array}
\Big| \; 1 \biggr)_{q}\\
=
\mathbb{J}_0(q, k)^2
- (2q+k^2-10k+18) \, \mathbb{J}_0(q, k) 
- 3\, \mathbb{JJ}_0(q,k) 
+4q^2+(3k^2-22k+16)q-3k^3+25k^2-40k+8.
\end{multline*}
\\ \underline{Case 10 ($t_1+t_2+t_3=t_4+t_5$):}
\begin{multline*}
q^2  \sum_{\substack{t_1,t_2,t_3,t_4=0 \\ t_1, t_2, t_3 \neq 0 \\ t_4 \neq t_1, t_2, t_3 \\ t_4 \neq t_1+t_2, t_1+t_3, t_2+t_3}}^{k-1} 
{_{3}F_2}\biggl( \begin{array}{ccc} \chi_k^{t_1}, & \chi_k^{t_2}, & \chi_k^{t_3} \vspace{.05in}\\
\phantom{\chi_k^{t_1}} & \chi_k^{t_4}, & \chi_k^{t_1+t_2+t_3-t_4} \end{array}
\Big| \; 1 \biggr)_{q}\\
=
\mathbb{J}_0(q, k)^2
- (2q+k^2-12k+24) \, \mathbb{J}_0(q, k) 
- 3\, \mathbb{JJ}_0(q,k) 
+4q^2+(3k^2-24k+22)q-3k^3+31k^2-60k+14.
\end{multline*}
The total of these ten reducible cases is
\begin{align*}
q^2 \sum_{\vec{t} \in \left({\mathbb{Z}_{k}}\right)^{5} \setminus X_k} {_{3}F_2} \left( \vec{t} \; \big| \;  1 \right)_{q,k}
&=
10 \, \mathbb{J}_0(q, k)^2
-5(3q+2k^2-12k+15) \, \mathbb{J}_0(q,k) 
-15 \, \mathbb{JJ}_0(q,k) 
\\ & \qquad
+21 q^2
+5(3k^2 - 21k+12)q
-15k^3+105k^2-135k+21.
\end{align*}
Applying (\ref{for_J0R}) and (\ref{for_JJ0S}) yields
\begin{align*}
q^2 \sum_{\vec{t} \in \left({\mathbb{Z}_{k}}\right)^{5} \setminus X_k} {_{3}F_2} \left( \vec{t} \; \big| \;  1 \right)_{q,k}
&=
10 \, \mathbb{R}_k(q)^2 + 5 \, \mathbb{R}_k(q) \left( q-2k^2+1\right) -15 \, \mathbb{S}_k(q) 
\\ & \qquad
+q^2
-5 \left(2k^2-3k+2 \right) q
+ 15k^3-10k^2+1.
\end{align*}
which completes the proof of Theorem \ref{thm_Main2}.


\section{Orbits of $X_k$ and Proofs of Corollaries \ref{cor_k2}, \ref{cor_k3}, \ref{cor_k4} and \ref{cor_3k2}-\ref{cor_3k4}}\label{sec_Orbits}
Now that we have established Theorem \ref{thm_Main2} we want to evaluate $\mathcal{K}_4(G_k(q))$ for specific $k$. A major part of this is evaluating the hypergeometric terms in 
\begin{equation}\label{for_3F2_ND}
q^2 \sum_{\vec{t} \in X_k} {_{3}F_2} \left( \vec{t} \; \big| \;  1 \right)_{q,k},
\end{equation}
where 
$X_k := \{  (t_1,t_2,t_3,t_4,t_5) \in \left({\mathbb{Z}_{k}}\right)^{5} \mid t_1,t_2,t_3 \neq 0, t_4,t_5 \, ; \, t_1+t_2+t_3 \neq t_4+t_5 \}$. 
We first note that
\begin{equation}\label{for_Xk}
|X_k| = \sum_{t_1,t_2, t_3=1}^{k-1} 
\sum_{\substack{t_4,t_5=0 \\ t_4,t_5 \neq t_1,t_2,t_3 \\ t_4+t_5 \neq t_1+t_2+t_3}}^{k-1} 1
=(k-1)(k^4-9k^3+36k^2-69k+51).
\end{equation}
Evaluating the sum in (\ref{for_Xk}) is quite a straightforward counting exercise although it does take some effort to unwind all the conditions.
Table \ref{tab_Xk} below gives the values of $|X_k|$ for small $k$.
\begin{table}[!htbp]
\centering
\begin{tabular}{| c | l |}
\hline
$k$ & $|X_k|$\\
\hline
2 & 1\\
3 & 12\\
4 & 93\\
5 & 424\\
6 & 1425\\
\hline
\end{tabular}
\vspace{6pt}
\caption{Order of $X_k$}
\label{tab_Xk}
\end{table}
As you can see, even for small $k$, there are too many terms in (\ref{for_3F2_ND}) to yield a compact formula for $K_4(G_k(q))$, if we have to evaluate them all individually. Luckily, many of the hypergeometric function summands in (\ref{for_3F2_ND}) are equal via the transformation formulae (\ref{for_3F2_T1})-(\ref{for_3F2_T7}). 

To each of the transformations (\ref{for_3F2_T1})-(\ref{for_3F2_T7}) we can associate a map on $X_k$. For example, applying (\ref{for_3F2_T1}) we get that
\begin{equation*}
{_{3}F_2}\biggl( \begin{array}{ccc} \chi_k^{t_1}, & \chi_k^{t_2}, & \chi_k^{t_3} \vspace{.05in}\\
\phantom{\chi_k^{t_1}} & \chi_k^{t_4}, & \chi_k^{t_5} \end{array}
\Big| \; 1 \biggr)_{q}
=
{_{3}F_2}\biggl( \begin{array}{ccc} \chi_k^{t_2-t_4}, & \chi_k^{t_1-t_4}, & \chi_k^{t_3-t_4} \vspace{.05in}\\
\phantom{\chi_k^{t_1}} & \chi_k^{-t_4}, & \chi_k^{t_5-t_4} \end{array}
\Big| \; 1 \biggr)_{q}.
\end{equation*}
This induces a map $T_1: X_k \to X_k$ given by 
$$T_1(t_1, t_2, t_3, t_4, t_5) = (t_2-t_4, t_1-t_4, t_3-t_4, -t_4, t_5-t_4),$$
where the addition in each component takes place in $\mathbb{Z}_5$.
Similarly, to the transformations (\ref{for_3F2_T2})-(\ref{for_3F2_T7}) we can associate the maps
\begin{align*}
T_2(t_1, t_2, t_3, t_4, t_5) &= (t_1, t_1-t_4, t_1-t_5, t_1-t_2, t_1-t_3);\\
T_3(t_1, t_2, t_3, t_4, t_5) &= (t_2-t_4, t_2, t_2-t_5, t_2-t_1, t_2-t_3);\\
T_4(t_1, t_2, t_3, t_4, t_5) &= (t_1, t_2, t_5-t_3, t_1+t_2-t_4, t_5);\\
T_5(t_1, t_2, t_3, t_4, t_5) &= (t_1, t_4-t_2, t_3, t_4, t_1+t_3-t_5);\\
T_6(t_1, t_2, t_3, t_4, t_5) &= (t_4-t_1, t_2, t_3, t_4, t_2+t_3-t_5); \, \textup{and}\\
T_7(t_1, t_2, t_3, t_4, t_5) &= (t_4-t_1, t_4-t_2, t_3, t_4, t_4+t_5-t_1-t_2)
\end{align*}
respectively. We form the group generated by $T_1, T_2, \cdots, T_7$, with operation composition of functions, and call it $\mathbb{T}_k$. We find that
\begin{align*}
\mathbb{T}_k 
&= \langle T_1, T_2, T_3, T_4, T_5, T_6, T_7 \rangle \\
&= \{T_0, T_i, T_j \circ T_{\ell}, T_4 \circ T_1, T_6 \circ T_2, T_5 \circ T_3, T_1 \circ T_4 \circ T_1 | 1 \leq i \leq 7, 1 \leq j \leq 3, 4 \leq \ell \leq 7 \},
\end{align*}
where $T_0$ is the identity map, is a group of order 24 isomorphic to the permutation group $S_4$.
$\mathbb{T}_k$ acts on $X_k$. Furthermore, the value of ${_{3}F_2} \left( \vec{t} \; \big| \;  1 \right)_{q,k}$ is constant for all $\vec{t}$ in each orbit. Therefore, we can reduce the evaluation of (\ref{for_3F2_ND}) to orbit representatives.

We can calculate explicitly the number of orbits for a given $k$, $N_k$. For $T \in \mathbb{T}_k$, let 
$X_{T} := \{\vec{t} \in X_k \mid T ({\vec{t}} \;)=\vec{t} \; \}$.
Then, by Burnside's theorem, the number of orbits is given by
\begin{equation*}
N_k = \frac{1}{24} \sum_{T \in \mathbb{T}_k} |X_{T}|.
\end{equation*}
We can evaluate each $|X_{T}|$ directly. For example,
\begin{align*}
T_1(t_1, t_2, t_3, t_4, t_5) = (t_1, t_2, t_3, t_4, t_5) 
& \Longleftrightarrow t_1=t_2, t_4=0,
\end{align*}
so
\begin{equation*}
X_{T_1} = \{ \vec{t} \in X_k \mid t_1=t_2, t_4=0 \}.
\end{equation*}
Then
\begin{align*}
|X_{T_1}| 
&= \sum_{t_1,t_3=1}^{k-1} \sum_{\substack{t_5=0 \\ t_5 \neq t_1,t_3, 2t_1+t_3}}^{k-1} 1 \\
&=  \sum_{t_1,t_3=1}^{k-1} \sum_{\substack{t_5=0 \\ t_5 \neq 2t_1+t_3}}^{k-1} 1 \; - \sum_{\substack{t_1,t_3=1 \\ t_1 \neq -t_3}}^{k-1} 1\;  - \sum_{\substack{t_1,t_3=1 \\ t_3 \neq t_1 \\2t_1 \neq 0}}^{k-1} 1 \\
&= (k-1)^3 - (k-1)(k-2) - (k-2)
\begin{cases}
(k-1) & \textup{if $k$ odd,}\\
(k-2) & \textup{if $k$ even,}
\end{cases}\\
&= 
\begin{cases}
k^3-5k^2+9k-5 & \textup{if $k$ odd,}\\
k^3-5k^2+10k-7 & \textup{if $k$ even}.
\end{cases}
\end{align*}
The other cases are similar. In summary
\begin{equation*}
|X_{T_1}| = |X_{T_7}| = |X_{T_1 \circ T_7}|
= 
\begin{cases}
k^3-5k^2+9k-5 & \textup{if $k$ odd,}\\
k^3-5k^2+10k-7 & \textup{if $k$ even},
\end{cases}
\end{equation*}
\begin{multline*}
|X_{T_2}| = |X_{T_3}| =|X_{T_4}| = |X_{T_5}| = |X_{T_6}| = |X_{T_1 \circ T_4 \circ T_1}|\\
= 
\begin{cases}
(k-1)(k-3)^2 & \textup{if $k$ odd,}\\
(k-1)(k-3)^2 +6(k-2) & \textup{if $k$ even},
\end{cases}
\end{multline*}
\begin{multline*}
|X_{T_1 \circ T_4}| = |X_{T_1 \circ T_5}| =|X_{T_1 \circ T_6}| = |X_{T_2 \circ T_7}| = |X_{T_3 \circ T_7}| = |X_{T_4 \circ T_1}|\\
= 
\begin{cases}
0 & \textup{if $k$ odd,}\\
(k-1) & \textup{if $k\equiv 2 \imod4$,}\\
(k-3) & \textup{if $k\equiv 0 \imod4$},
\end{cases}
\end{multline*}
\begin{multline*}
|X_{T_2 \circ T_4}| = |X_{T_2 \circ T_5}| =|X_{T_2 \circ T_6}| =|X_{T_3 \circ T_4}| = |X_{T_3 \circ T_5}| =|X_{T_3 \circ T_6}| = |X_{T_6 \circ T_2}| = |X_{T_5 \circ T_3}| \\
= 
\begin{cases}
k-1 & \textup{if $3 \nmid k$},\\
3(k-3) & \textup{if $3 \mid k$},
\end{cases}
\end{multline*}
and, of course,
$|X_{T_0}| = |X_k| = (k-1)(k^4-9k^3+36k^2-69k+51)$.
So, the number of orbits, by Burnside's theorem, is
\begin{equation*}
N_k = \frac{1}{24} \left[ k^5 -10k^4+54k^3-162k^2+245k-128
+
\begin{cases}
0 & \textup{if } k \equiv 1,5,7,11 \imod{12},\\
16k-64 & \textup{if } k \equiv 3,9 \imod{12},\\
45k-84 & \textup{if } k \equiv 2,10 \imod{12},\\
45k-96 & \textup{if }k \equiv 4,8 \imod{12},\\
61k-148 & \textup{if }k \equiv 6 \imod{12},\\
61k-160 & \textup{if } k \equiv 0 \imod{12}.
\end{cases}
\right].
\end{equation*} 
Table \ref{tab_Orbits} below gives the number of orbits for small $k$.
\begin{table}[!htbp]
\centering
\begin{tabular}{| c | l | l |}
\hline
$k$ & $|X_k|$ & $N_k$\\
\hline
2 & 1 & 1\\
3 & 12 & 1\\
4 & 93 & 11\\
5 & 424 & 28\\
6 & 1425 & 92\\
\hline
\end{tabular}
\vspace{6pt}
\caption{Number of Orbits}
\label{tab_Orbits}
\end{table}
So, for small $k$, the number of hypergeometric terms that need evaluating has been reduced to more manageable levels. We also note that, and we will see some evidence of this later in the proof of Corollary \ref{cor_k4}, there are other transformations which can be applied on an ad-hoc basis to reduce these numbers even further. We now prove Corollaries \ref{cor_k2}, \ref{cor_k3} and \ref{cor_k4}.

\begin{proof}[Proof of Corollary \ref{cor_k2}]
We apply Theorem \ref{thm_Main2} with $k=2$. Now $\mathbb{R}_2(q) = \mathbb{S}_2(q) = 0$ as there are no indices that satisfy the conditions of the sum in each case. Also, 
$\vec{t} = (1,1,1,0,0)$ is the only element of $X_2$. Therefore
\begin{equation}\label{for_k4G2}
\mathcal{K}_4(G_2(q)) =
\frac{q(q-1)}{2^9 \cdot 3} 
\Biggl[ 
 q^2 -20 q+ 81
+q^2 
{_{3}F_2} {\left( \begin{array}{ccc} \varphi, & \varphi, & \varphi \\
\phantom{\varphi} & \varepsilon, & \varepsilon \end{array}
\Big| \; 1 \right)}_{q}
\Biggr].
\end{equation}
Combining \cite[Thm 4.37]{G2} and Proposition \ref{prop_JacXfer} we get that
\begin{align*}
q^2 {_{3}F_2} {\left( \begin{array}{ccc} \varphi, & \varphi, & \varphi \\
\phantom{\varphi} & \varepsilon, & \varepsilon \end{array}
\Big| \; 1 \right)}_{q}
&=q^2 \left[\binom{\chi_4}{\varphi} \binom{\chi_4}{\bar{\chi_4}} +\binom{\bar{\chi_4}}{\varphi} \binom{\bar{\chi_4}}{\chi_4} \right]\\
&= J(\chi_4,\chi_4)^2 + J(\bar{\chi_4},\bar{\chi_4})^2, 
\end{align*}
where $\chi_4$ is a character of order four of $\mathbb{F}_{q}^*$.
So, by Lemma \ref{lem_JacOrder4}(2),
\begin{equation}\label{for_3F2_qOno}
q^2 {_{3}F_2} {\left( \begin{array}{ccc} \varphi, & \varphi, & \varphi \\
\phantom{\varphi} & \varepsilon, & \varepsilon \end{array}
\Big| \; 1 \right)}_{q}
= 2x^2-2y^2 = 4x^2-2q = 2q-4y^2,
\end{equation}
which is generalization of \cite[Thm 4]{O} to prime powers.
Substituting (\ref{for_3F2_qOno}) into (\ref{for_k4G2}) yields the result.
\end{proof}

\begin{proof}[Proof of Corollary \ref{cor_k3}]
We apply Theorem \ref{thm_Main2} with $k=3$. Note $\mathbb{S}_3(q) = 0$ as there are no indices that satisfy the conditions of the sum. Also, $X_3$ contains only one orbit of order 12 with representative $\vec{t} = (1,1,2,0,0)$. Therefore
\begin{multline}\label{for_k4G3}
\mathcal{K}_4(G_3(q)) =
\frac{q(q-1)}{2^3 \cdot 3^7} 
\Biggl[ 
10 \, \mathbb{R}_3(q)^2 + 5 \left( q-17\right) \mathbb{R}_3(q) +q^2\\
-55 q + 316
+12 \, q^2 \,
{_{3}F_2} {\left( \begin{array}{cccc} \chi_3, & \chi_3, &  \bar{\chi_3} \\
\phantom{\chi_3} & \varepsilon, &  \varepsilon \end{array}
\Big| \; 1 \right)}_{q}
\Biggr].
\end{multline}
Now $\mathbb{R}_3(q) = J(\chi_3,\chi_3) + J(\bar{\chi_3},\bar{\chi_3}) = c$ by Lemma \ref{lem_JacOrder3}. Substituting into (\ref{for_k4G3}) yields the result.
\end{proof}

\begin{proof}[Proof of Corollary \ref{cor_k4}]
Using Proposition \ref{prop_JacXfer} and Lemma \ref{lem_JacOrder4}(1) we get that 
\begin{equation}\label{for_R4}
\mathbb{R}_4(q) = 3(J(\chi_4,\chi_4) + J(\bar{\chi_4},\bar{\chi_4})) =-6x.
\end{equation}
Using Propositions \ref{prop_JacXfer} \& \ref{prop_JacProdq} and Lemma \ref{lem_JacOrder4}(2) we have
\begin{equation}\label{for_S4}
\mathbb{S}_4(q) = 4q + J(\chi_4,\chi_4)^2 + J(\bar{\chi_4},\bar{\chi_4})^2 = 4x^2+2q.
\end{equation}
$X_4$ contains eleven orbits with representatives 
$(1,1,1,0,0)^6$,
$(3,3,3,0,0)^6$,
$(1,3,3,2,0)^4$,
$(3,1,1,2,0)^4$,
$(2,1,3,0,0)^{12}$,
$(1,3,2,0,0)^6$,
$(2,3,1,0,0)^{12}$,
$(1,2,2,0,0)^{24}$,
$(2,2,1,0,0)^6$,
$(1,1,3,0,0)^{12}$, and
$(2,2,2,0,0)^1$,
where the superscripts represent the order of the orbit. Some of the corresponding hypergeometric functions are equal and some can be reduced. In particular, by definition,
\begin{equation}\label{for_k4Eq1}
{_{3}F_2} {\left( \begin{array}{ccc} \varphi, & \chi_4, & \bar{\chi_4}  \\
\phantom{\varphi} & \varepsilon, & \varepsilon \end{array}
\Big| \; 1 \right)}_{q}\\
=
{_{3}F_2} {\left( \begin{array}{cccc} \chi_4, & \bar{\chi_4}, &  \varphi \\
\phantom{\chi_4} & \varepsilon, &  \varepsilon \end{array}
\Big| \; 1 \right)}_{q}
=
{_{3}F_2} {\left( \begin{array}{cccc} \varphi, & \bar{\chi_4}, &  \chi_4 \\
\phantom{\varphi} & \varepsilon, &  \varepsilon \end{array}
\Big| \; 1 \right)}_{q},
\end{equation}
and
\begin{equation}\label{for_k4Eq2}
{_{3}F_2} {\left( \begin{array}{cccc} \chi_4, & \varphi, &  \varphi, \\
\phantom{\chi_4} & \varepsilon, &  \varepsilon \end{array}
\Big| \; 1 \right)}_{q}
=
{_{3}F_2} {\left( \begin{array}{cccc}\varphi, & \varphi, &  \chi_4 \\
\phantom{\chi_4} & \varepsilon, &  \varepsilon \end{array}
\Big| \; 1 \right)}_{q}.
\end{equation}

By \cite[Thm 4.37]{G2}, Proposition \ref{prop_JacXfer} and Lemma \ref{lem_JacOrder8}
\begin{align}\label{for_k4Red2}
\notag
q^2 & {_{3}F_2} {\left( \begin{array}{ccc} \chi_4, & \chi_4, & \chi_4 \\
\phantom{\chi_4} & \varepsilon, & \varepsilon \end{array}
\Big| \; 1 \right)}_{q}
+q^2 {_{3}F_2} {\left( \begin{array}{ccc} \bar{\chi_4}, & \bar{\chi_4}, & \bar{\chi_4} \\
\phantom{\bar{\chi_4}} & \varepsilon, & \varepsilon \end{array}
\Big| \; 1 \right)}_{q}\\
\notag
&=q^2 \left[\binom{\chi_8}{\chi_8^2} \binom{\chi_8}{\chi_8^3} +\binom{\chi_8^5}{\chi_8^2} \binom{\chi_8^5}{\chi_8^7}
+\binom{\chi_8^7}{\chi_8^6} \binom{\chi_8^7}{\chi_8^5} +\binom{\chi_8^3}{\chi_8^6} \binom{\chi_8^3}{\chi_8} \right]\\
\notag
&= \chi_8(-1) \left[ J(\chi_8,\chi_8) J(\chi_8,\chi_8^2) + J(\chi_8^5,\chi_8^5) J(\chi_8,\chi_8^2) 
\right. \\ \notag & \qquad \qquad \qquad \qquad \quad  \left. 
+ J(\chi_8^7,\chi_8^7) J(\chi_8^7,\chi_8^6) + J(\chi_8^3,\chi_8^3) J(\chi_8^7,\chi_8^6)  \right]\\
\notag
&= 2 \, \chi_8(-1)  \, Re \left[J(\chi_8,\chi_8) J(\chi_8,\chi_8^2) + J(\chi_8^3,\chi_8^3) J(\chi_8^7,\chi_8^6) \right]\\
\notag
&= 4 \, \chi_8(-1)  \, Re(J(\chi_8,\chi_8)) \, Re(J(\chi_8,\chi_8^2))\\
 &= - 4  u x.
\end{align}
Similarly
\begin{align}\label{for_k4Red2}
q^2 & {_{3}F_2} {\left( \begin{array}{ccc} \chi_4, & \bar{\chi_4}, & \bar{\chi_4} \\
\phantom{\chi_4} & \varphi, & \varepsilon \end{array}
\Big| \; 1 \right)}_{q}
+q^2 {_{3}F_2} {\left( \begin{array}{ccc} \bar{\chi_4}, & \chi_4, & \chi_4 \\
\phantom{\bar{\chi_4}} & \varphi, & \varepsilon \end{array}
\Big| \; 1 \right)}_{q}
= - 4  u x.
\end{align}
By \cite[Thm 4.38(ii)]{G2}, Proposition \ref{prop_JacXfer} and Lemma \ref{lem_JacOrder8}
\begin{align}\label{for_k4Red3}
\notag
q^2  {_{3}F_2} {\left( \begin{array}{ccc} \varphi, & \chi_4, & \bar{\chi_4}  \\
\phantom{\varphi} & \varepsilon, & \varepsilon \end{array}
\Big| \; 1 \right)}_{q}
&=q^2 \left[\binom{\chi_8}{\chi_8^4} \binom{\chi_8}{\chi_8^5} +\binom{\chi_8^5}{\chi_8^4} \binom{\chi_8^5}{\chi_8} \right]\\
\notag
&= J(\chi_8,\chi_8^3)^2 +  J(\chi_8^5,\chi_8^7)^2 \\
\notag
&= (u+v \sqrt{-2})^2 + (u-v\sqrt{-2})^2\\
&=4u^2-2q.
\end{align}
Taking $k=4$ in Theorem \ref{thm_Main2} and accounting for (\ref{for_3F2_qOno}), (\ref{for_R4})-(\ref{for_k4Red3}) yields the result.
\end{proof}

We now also have all the ingredients to prove Corollaries \ref{cor_3k2}-\ref{cor_3k4}.

\begin{proof}[Proof of Corollaries \ref{cor_3k2}-\ref{cor_3k4}]
We've seen above that $\mathbb{R}_2(q) = 0$, $\mathbb{R}_3(q) = c$ and $\mathbb{R}_4(q) =-6x$. Taking $k=2,3,4$ in Theorem \ref{thm_Main3} yields the results.
\end{proof}


\section{Lower bounds for the multicolor Ramsey numbers}\label{sec_Ramsey}
Let $k\geq 2$ and $n_1,n_2, \cdots, n_k$ be positive integers. Let $K_l$ denote the complete graph of order $l$. The multicolor Ramsey number $R(n_1,n_2, \cdots, n_k)$ is the smallest integer $l$ satisfying the property that, if the edges of $K_l$ are colored in $k$ colors, then for some $1 \leq i \leq k$, $K_l$ contains a complete subgraph $K_{n_i}$ in color $i$. If $n_1=n_2=\cdots = n_k $ then we use the abbreviated $R_k(n_1)$ to denote the multicolor Ramsey number.

Recall the generalized Paley graph of order $q$, $G_k(q)$, for $q$ a prime power such that $q \equiv 1 \imod {k}$ if $q$ is even, or, $q \equiv 1 \imod {2k}$ if $q$ is odd. $S_k$ is the subgroup of the multiplicative group $\mathbb{F}_q^{\ast}$ of order $\frac{q-1}{k}$ containing the $k$-th power residues, i.e., if $\omega$ is a primitive element of $\mathbb{F}_q$, then $S_k = \langle \omega^k \rangle$. Then $G_k(q)$ is the graph with vertex set $\mathbb{F}_q$ where $ab$ is an edge if and only if $a-b \in S_k$.

We now define subsets of $\mathbb{F}_q^{\ast}$, $S_{k,i} := \omega^{i} S_k$, for $0 \leq i \leq k-1$, and the related graphs $G_{k,i}(q)$ with vertex set $\mathbb{F}_q$ where $ab$ is an edge if and only if $a-b \in S_{k,i}$. Each $G_{k,i}(q)$ is isomorphic to $G_{k,0}(q)=G_k(q)$, the generalized Paley graph, via the map $f:V(G_k(q)) \to V(G_{k,i}(q))$ given by $f(a)=\omega^i a$. Now consider the complete graph $K_q$ whose vertex set is taken to be $\mathbb{F}_q$ and whose edges are colored in $k$ colors according to $ab$ has color $i$ if $a-b \in S_{k,i}$. Note that the color $i$ subgraph of $K_q$ is $G_{k,i}(q)$. Thus, this $K_q$ has a subgraph $K_l$ in a single color if and only if the generalized Paley graph contains a subgraph $K_l$.  Therefore, if $\mathcal{K}_l(G_k(q_1))=0$ for some $q_1$, then $q_1<R_k(l)$. 

We start with the ${l=4}$ case. For a given $k$, we search for the greatest $q$ such that $\mathcal{K}_4(G_k(q))=0$, thus establishing that $q<R_k(4)$.
When $k=3$ we search all $q \leq 230$, which is a known upper bound for $R_3(4)$ \cite {R}, and use Corollary \ref{cor_k3} to confirm the value of $\mathcal{K}_4(G_k(q))$. When $q=127$, $c=-20$ and 
$${_{3}F_2} {\left( \begin{array}{cccc} \chi_3, & \chi_3, &  \bar{\chi_3} \\
\phantom{\chi_3} & \varepsilon, &  \varepsilon \end{array}
\Big| \; 1 \right)}_{127}
=-205.$$
Thus $\mathcal{K}_4(G_3(127))=0$ and so $128 \leq R(4,4,4)$, which proves Corollary \ref{cor_Ramseyk3}.
When $k=4$ we search all $q \leq 6306$, which is a known upper bound for $R_4(4)$ \cite {R}. We apply Corollary \ref{cor_k4} with $q=457$. In this case $x=21$, $u=-13$, 
$${_{3}F_2} {\left( \begin{array}{cccc} \chi_4, & \chi_4, &  \bar{\chi_4} \\
\phantom{\chi_3} & \varepsilon, &  \varepsilon \end{array}
\Big| \; 1 \right)}_{457}
=290,
\qquad \textup{and} \qquad
{_{3}F_2} {\left( \begin{array}{cccc} \varphi, & \varphi, &  \chi_4 \\
\phantom{\chi_3} & \varepsilon, &  \varepsilon \end{array}
\Big| \; 1 \right)}_{457}
=-590.$$
Thus $\mathcal{K}_4(G_4(457))=0$ and so $458 \leq R(4,4,4,4)$, which proves Corollary \ref{cor_Ramseyk4}.
Similar searches for $k=5,6$, using Theorem \ref{thm_Main2}, yield $942 \leq R_5(4)$ and $3458 \leq R_6(4)$ which fall well short of known bounds \cite{R, XZER}.

When ${l=3}$, Corollary \ref{cor_Ramsey3k234} follows easily from Corollaries \ref{cor_3k2}-\ref{cor_3k4}. We can use Theorem \ref{thm_Main3} with $k=5,6$ to get $102 \leq R_5(3)$ and $278 \leq R_6(3)$, which again fall well short of known bounds. In fact it is known that $162 \leq R_5(3) \leq 307$ and $538 \leq R_6(3) \leq 1838$ \cite{XZER}.


\section{Connections to Modular Forms}\label{sec_ModularForms}
When $q=p$ is prime, many of the quantities in Corollaries \ref{cor_k2}, \ref{cor_k3} and \ref{cor_k4} can be related to the $p$-th Fourier coefficients of certain modular forms. Let $x,y,c,d,u,v$ be as defined in those corollaries.  

Consider the unique newform $f \in S_3(\Gamma_0(16), (\tfrac{-4}{\cdot}))$, which has complex multiplication (CM) by $\mathbb{Q}(i)$, where
$$
f(z) = \sum_{n=1}^{\infty} \alpha(n) q^n = q\prod_{m=1}^{\infty}(1-q^{4m})^6,  \quad q := e^{2\pi i z}.
$$
Then \cite{MOS, O2, Z}, for $p \equiv 1 \imod4$,
$\alpha(p) = 2x^2-2y^2 = 2p-4y^2$,
and Corollary \ref{cor_k2} yields
\begin{cor}\label{cor_k2_MF}
Let $p \equiv 1 \pmod {4}$ be prime. Then
\begin{equation*}
\mathcal{K}_4(G(p)) =
\frac{p(p-1)((p-9)^2 - 2p + \alpha(p))}{2^9 \cdot 3}. 
\end{equation*}
\end{cor}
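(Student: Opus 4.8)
The plan is to derive Corollary \ref{cor_k2_MF} as a direct specialization of Corollary \ref{cor_k2} combined with the stated arithmetic description of the Fourier coefficients of the weight-$3$ CM newform $f \in S_3(\Gamma_0(16), (\tfrac{-4}{\cdot}))$. First I would set $q = p$ prime in Corollary \ref{cor_k2}, which gives
\begin{equation*}
\mathcal{K}_4(G(p)) = \frac{p(p-1)\bigl((p-9)^2 - 4y^2\bigr)}{2^9 \cdot 3},
\end{equation*}
where $p = x^2 + y^2$ with $y$ even (and, since $p \equiv 1 \imod 4$ is prime, $p \nmid x$, so the hypotheses of Corollary \ref{cor_k2} are met and $y \neq 0$).

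Next I would invoke the coefficient formula recorded in the text just above the statement, namely that for $p \equiv 1 \imod 4$ one has $\alpha(p) = 2x^2 - 2y^2 = 2p - 4y^2$; this is the classical evaluation attributed to \cite{MOS, O2, Z}. Solving for the quantity appearing in Corollary \ref{cor_k2}, this is equivalent to $-4y^2 = \alpha(p) - 2p$. Substituting this identity into the bracketed factor yields
\begin{equation*}
(p-9)^2 - 4y^2 = (p-9)^2 + \alpha(p) - 2p = (p-9)^2 - 2p + \alpha(p),
\end{equation*}
and inserting this back into the displayed formula for $\mathcal{K}_4(G(p))$ gives exactly the claimed expression.

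There is no genuine obstacle here: the entire content of the corollary is the translation of the elementary quantity $4y^2$ into the modular datum $\alpha(p)$, and once the identity $\alpha(p) = 2p - 4y^2$ is accepted the result is a one-line substitution. The only point requiring minor care is consistency of normalizations, i.e. confirming that the $x,y$ produced by Corollary \ref{cor_k2} (with $y$ even, $x \equiv 1 \imod 4$) are the same $x,y$ for which the cited evaluation $\alpha(p) = 2x^2 - 2y^2$ holds; since $2x^2 - 2y^2 = 2(x^2+y^2) - 4y^2 = 2p - 4y^2$ depends only on $y^2$, and $y$ is determined up to sign, this causes no ambiguity and the substitution is unaffected by the choice of signs.
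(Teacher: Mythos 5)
Your proposal is correct and matches the paper's own derivation exactly: the paper obtains Corollary \ref{cor_k2_MF} by specializing Corollary \ref{cor_k2} to $q=p$ and substituting the cited evaluation $\alpha(p) = 2x^2-2y^2 = 2p-4y^2$, i.e.\ $-4y^2 = \alpha(p)-2p$, into the bracketed factor. Your additional remark about the sign ambiguity in $y$ being harmless is a sensible (if implicit in the paper) consistency check.
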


Consider the unique newform $g_1 \in S_2(\Gamma_0(27))$, which has CM by $\mathbb{Q}(\sqrt{-3})$, where
$$
g_1(z) = \sum_{n=1}^{\infty} \beta_1(n) q^n = q\prod_{m=1}^{\infty}(1-q^{3m})^2 (1-q^{9m})^2.
$$
Then \cite{MO}, for $p \equiv 1 \imod6$, $\beta_1(p) = -c$.
Also consider the non-CM newform $g_2 \in S_3(\Gamma_0(27), (\tfrac{-3}{\cdot}))$ where 
$$
g_2(z) = \sum_{n=1}^{\infty} \beta_2(n) q^n = q + 3 i q^{2} -5 q^{4} -3 i q^{5} + 5 q^{7} -3 i q^{8} + \cdots .
$$
Then numerical evidence \cite{LMFDB} suggests that, for $p \equiv 1 \imod6$,
$$p^2 
{_{3}F_2} {\left( \begin{array}{cccc} \chi_3, & \chi_3, &  \bar{\chi_3} \\
\phantom{\chi_3} & \varepsilon, &  \varepsilon \end{array}
\Big| \; 1 \right)}_{p} 
\stackrel{?}{=} \beta_2(p).
$$
If so, then Corollary \ref{cor_k3} yields, for $p \equiv 1 \imod6$,  
\begin{equation*}
\mathcal{K}_4(G_3(p)) \stackrel{?}{=} 
\frac{p(p-1)}{2^3 \cdot 3^7}. 
\Biggl[ 
p^2-5p(\beta_1(p)+11)+10 \, \beta_1(p)^2+85 \, \beta_1(p)+316 +12 \, \beta_2(p)
\Biggr].
\end{equation*}

In the $k=4$ case, we consider the following newforms:
\begin{align*}
h_1(z) &= \sum_{n=1}^{\infty} \gamma_1(n) q^n = q\prod_{m=1}^{\infty} (1-q^{4m})^{-2}  (1-q^{8m})^8 (1-q^{16m})^{-2} \in S_2(\Gamma_0(64));\\
h_2(z) &= \sum_{n=1}^{\infty} \gamma_2(n) q^n = q + 2 i q^{3} - q^{9} -6 i q^{11} -6 q^{17} + 2 i q^{19} + 5 q^{25}  \cdots \in  S_2(\Gamma_0(64), \Psi_1);\\
h_3(z) &= \sum_{n=1}^{\infty} \gamma_3(n) q^n = q\prod_{m=1}^{\infty} (1-q^{m})^2 (1-q^{2m}) (1-q^{4m}) (1-q^{8m})^2 \in S_3(\Gamma_0(8),(\tfrac{-2}{\cdot})); \textup{and}\\
h_4(z) &= \sum_{n=1}^{\infty} \gamma_4(n) q^n = q + 4iq^3 +2 q^5 -8iq^7 -7q^9 - 4iq^{11} -14 q^{13} + \cdots \in S_3(\Gamma_0(32), (\tfrac{-4}{\cdot})),
\end{align*}
where $\Psi_1$ is the Dirichlet character modulo 64 sending generators $(63,5)\to(1,-1)$, $h_1$ has CM by $\mathbb{Q}(i)$, and, $h_2$ and $h_3$ have CM by $\mathbb{Q}(\sqrt{-2})$.

When $p \equiv 1 \imod4$, $\gamma_1(p) = 2x$ \cite{MO, Z}. When $p \equiv 1 \imod 8$, $\gamma_3(p) = 2u^2-4v^2 = 4u^2-2p$ \cite{GMY}. Further, numerical evidence \cite{LMFDB} suggests that $\gamma_2(p) \stackrel{?}{=}  -2u$, when $p \equiv 1 \imod8$. It should be a relatively straightforward exercise to establish this relation using Hecke characters and a similar construction to that in \cite{GMY}. 
If $p \equiv 1 \pmod 4$, then \cite{MP}
\begin{equation*}
p^2 \, {_{3}F_2} \biggl( \begin{array}{ccc} \chi_4, & \varphi, & \varphi \vspace{.02in}\\
\phantom{\chi_4} & \varepsilon, & \varepsilon \end{array}
\Big| \; 1 \biggr)_{p}
=\gamma_4(p).
\end{equation*}
If we let 
\begin{equation*}
p^2 \,{_{3}F_2} \biggl( \begin{array}{ccc} \chi_4, & \chi_4 & \bar{\chi_4} \vspace{.02in}\\
\phantom{\chi_4} & \varepsilon, & \varepsilon \end{array}
\Big| \; 1 \biggr)_{p}
=\gamma_5(p),
\end{equation*}
then it looks like $\gamma_5(p)$ is a twist of the $p$-th Fourier coefficient of a newform 
$h_5(z) \in S_3(\Gamma_0(128), \Psi_2),$
where $\Psi_2$ is the Dirichlet character modulo 128 sending generators $(127,5)\to(-1,-1)$.
So, when $p \equiv 1 \imod8$, we would expect
\begin{multline*}
\mathcal{K}_4(G_4(p)) \stackrel{?}{=}
\frac{p(p-1)}{2^{15} \cdot 3}. 
\Biggl[ 
p^2-p(15 \, \gamma_1(p)+142)+76 \, \gamma_1(p)^2+ 465 \, \gamma_1(p)+801 \\
+10 \, \gamma_1(p) \gamma_2(p)
+30 \, \gamma_3(p)
+30 \, \gamma_4(p)
+12 \, \gamma_5(p)
\Biggr].
\end{multline*}

In general, establishing relations between finite field hypergeometric functions and coefficients of non-CM modular forms is not a straightforward exercise. The main method used in the known cases is to apply the Eichler-Selberg trace formula for Hecke operators to isolate the Fourier coefficients of the form, and then connect these traces to hypergeometric values by counting isomorphism classes of elliptic curves with prescribed torsion  (see for example \cite{A, AO, FOP, F, L, MP}). This is a long and tedious process and, to date, has been carried out on an ad-hoc basis in each case to accommodate the specifics of each form. Relations with CM modular forms, however, are generally much easier to establish. These usually reduce to evaluating Jacobi sums, as we have seen above (see also \cite{M}).

We can also express Corollaries \ref{cor_3k3} and \ref{cor_3k4}, when $q$ is prime, in terms of coefficients of modular forms using the connections outlined above.
\begin{cor}\label{cor_3k3_MF}
Let $p\equiv 1 \pmod {6}$ be prime. Then
\begin{equation*}
\mathcal{K}_3(G_3(q)) =
\frac{q(q-1)(q- \beta_1(p)-8)}{2 \cdot 3^4}.
\end{equation*}
\end{cor}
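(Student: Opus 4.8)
The plan is to reduce this statement directly to Corollary \ref{cor_3k3} together with the arithmetic description of the CM newform $g_1$. Since $q = p$ is prime with $p \equiv 1 \imod 6$, Corollary \ref{cor_3k3} applies and gives
\begin{equation*}
\mathcal{K}_3(G_3(p)) = \frac{p(p-1)(p+c-8)}{2 \cdot 3^4},
\end{equation*}
where $c$ is the integer determined by $4p = c^2 + 3d^2$ with $c \equiv 1 \imod 3$, $d \equiv 0 \imod 3$ and $p \nmid c$. Thus the entire content of the corollary lies in the identification $c = -\beta_1(p)$, after which the result follows by substituting $p + c - 8 = p - \beta_1(p) - 8$ into the displayed formula.

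First I would record the relation $\beta_1(p) = -c$ for $p \equiv 1 \imod 6$, which is exactly the statement quoted from \cite{MO} in the discussion preceding the corollary. This is the only nontrivial ingredient, and it is a statement about the $p$-th Fourier coefficient of the weight-two CM newform $g_1 \in S_2(\Gamma_0(27))$ attached to $\mathbb{Q}(\sqrt{-3})$. Granting it, substitution into the display above completes the proof in one line.

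Should one wish to make the argument self-contained rather than cite \cite{MO}, the natural route is to connect $\beta_1(p)$ to the cubic Jacobi sum already computed in Lemma \ref{lem_JacOrder3}, where it is shown that $J(\chi_3,\chi_3) + J(\bar{\chi_3},\bar{\chi_3}) = c$ (and hence $\mathbb{R}_3(q) = c$). Because $g_1$ has complex multiplication by $\mathbb{Q}(\sqrt{-3})$, its Fourier coefficients at primes split in $\mathbb{Z}[\omega]$ are governed by a Hecke character, so $\beta_1(p)$ is, up to a normalizing sign and the congruence conditions pinning down $c$, precisely $\pm(J(\chi_3,\chi_3) + J(\bar{\chi_3},\bar{\chi_3}))$. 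Matching the normalization $c \equiv 1 \imod 3$ against the standard normalization of $g_1$ then forces the sign to be $\beta_1(p) = -c$. This sign-bookkeeping is the only delicate point; the rest is immediate. I therefore expect no real obstacle beyond carefully tracking this normalization, since the hard analytic work has already been carried out in establishing Corollary \ref{cor_3k3} and Lemma \ref{lem_JacOrder3}.
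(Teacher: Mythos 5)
Your proposal is correct and matches the paper's own argument exactly: the paper obtains Corollary \ref{cor_3k3_MF} by substituting the relation $\beta_1(p) = -c$ for $p \equiv 1 \imod{6}$ (quoted from \cite{MO}) into the formula of Corollary \ref{cor_3k3}. Your additional remarks on deriving $\beta_1(p) = -c$ via the Hecke character attached to the CM form and Lemma \ref{lem_JacOrder3} are a reasonable sketch of how that cited fact is proved, but they go beyond what the paper itself does.
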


\begin{cor}\label{cor_3k4_MF}
Let $p \equiv 1 \pmod {8}$ be prime.
Then
\begin{equation*}
\mathcal{K}_3(G_4(q)) =
\frac{q(q-1)(q- 3 \, \gamma_1(p)-11)}{2^{7} \cdot 3}. 
\end{equation*}
\end{cor}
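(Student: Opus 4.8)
The plan is to obtain this statement as an immediate specialization of Corollary \ref{cor_3k4} to the prime case $q=p$, followed by substitution of the known modular-form evaluation of $\gamma_1(p)$. First I would invoke Corollary \ref{cor_3k4} with $q=p$, where $p\equiv 1\pmod 8$ is prime. Since the hypotheses of Corollary \ref{cor_3k4} require only $q\equiv 1\pmod 8$, and $p\equiv 1\pmod 8$ satisfies this trivially (with $r=1$), we obtain
\begin{equation*}
\mathcal{K}_3(G_4(p)) = \frac{p(p-1)(p-6x-11)}{2^7\cdot 3},
\end{equation*}
where $x$ is the integer satisfying $p=x^2+y^2$, $x\equiv 1\pmod 4$ and $p\nmid x$.

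Next I would appeal to the relation $\gamma_1(p)=2x$, valid for $p\equiv 1\pmod 4$ and recorded earlier in Section \ref{sec_ModularForms} (citing \cite{MO, Z}), where $\gamma_1(p)$ is the $p$-th Fourier coefficient of the weight-two CM newform $h_1\in S_2(\Gamma_0(64))$. Because $p\equiv 1\pmod 8$ forces $p\equiv 1\pmod 4$, this relation applies. Substituting $6x=3\cdot(2x)=3\,\gamma_1(p)$ into the displayed formula yields exactly the claimed expression, completing the argument.

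The only point requiring any care is the consistency of normalizations: Corollary \ref{cor_3k4} pins down $x$ by the conditions $x\equiv 1\pmod 4$ and $p\nmid x$, while the identity $\gamma_1(p)=2x$ must use the same sign convention for $x$. These conventions do agree, since both are ultimately tied to the sign of the Jacobi sum $J(\chi_4,\chi_4)$ through Lemma \ref{lem_JacOrder4}, so the substitution is unambiguous. Consequently there is no genuine obstacle in this corollary; all the substantive content resides in the earlier Corollary \ref{cor_3k4} and in the cited evaluation of the CM coefficient $\gamma_1(p)$.
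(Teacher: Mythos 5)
Your proposal is correct and matches the paper's own (implicit) proof exactly: the paper obtains Corollary \ref{cor_3k4_MF} by specializing Corollary \ref{cor_3k4} to $q=p$ prime and substituting the CM relation $\gamma_1(p)=2x$ from \cite{MO, Z}, so that $6x=3\,\gamma_1(p)$. Your added remark on the sign convention for $x$ (fixed by $x\equiv 1\pmod 4$, $p\nmid x$, consistent with Lemma \ref{lem_JacOrder4}) is a sensible check and consistent with how the paper normalizes $x$.
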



\vspace{6pt}

\end{document}